\newcommand{\E}{\mathbb{E}}
\newcommand{\N}{\mathbb{N}}
\renewcommand{\P}{\mathbb{P}}
\newcommand{\R}{\mathbb{R}}
\newcommand{\T}{\mathbb{T}}
\newcommand{\cA}{\mathcal{A}}
\newcommand{\cF}{\mathcal{F}}
\newcommand{\cG}{\mathcal{G}}
\newcommand{\cI}{\mathcal{I}}
\newcommand{\cK}{\mathcal{K}}
\newcommand{\cL}{\mathcal{L}}
\newcommand{\cQ}{\mathcal{Q}}
\newcommand{\cR}{\mathcal{R}}
\newcommand{\cS}{\mathcal{S}}
\newcommand{\cT}{\mathcal{T}}
\newcommand{\mf}{M}
\newcommand{\am}{\hat{\bm{\alpha}}^-_d}
\newcommand{\ap}{\hat{\bm{\alpha}}^+_d}
\newcommand{\apm}{\hat{\bm{\alpha}}^\pm_d}
\newcommand{\sm}{\hat{\bm{s}}^-_d}
\renewcommand{\sp}{\hat{\bm{s}}^+_d}
\newcommand{\ams}{\hat \alpha_{d,s}^-}
\newcommand{\aps}{\hat \alpha_{d,s}^+}
\newcommand{\apms}{\hat \alpha_{d,s}^\pm}
\newcommand{\sma}{\hat s_{d,\alpha}^-}
\newcommand{\spa}{\hat s_{d,\alpha}^+}
\newcommand{\spma}{\hat s_{d,\alpha}^\pm}
\DeclareMathOperator{\trace}{Tr}
\DeclareMathOperator{\re}{Re}
\let\div\relax
\DeclareMathOperator{\div}{div}
\DeclareMathOperator{\HS}{HS}
\DeclareMathOperator*{\argmin}{\arg\!\min}
\newcommand{\dd}{\,\mathrm{d}}
\newcommand{\eps}{\varepsilon}
\renewcommand{\setminus}{\smallsetminus}
\newcommand{\loc}{\mathrm{loc}}
\newcommand{\set}[1]{\left\{#1\right\}}
\newcommand{\gamhalf}[1]{\Gamma\left(\frac{#1}{2}\right)}
\DeclareMathOperator{\res}{Res}
\newtheorem{theorem}{Theorem}[section]
\newtheorem{definition}[theorem]{Definition}
\newtheorem{hypothesis}[theorem]{Hypothesis}
\newtheorem{corollary}[theorem]{Corollary}
\newtheorem{lemma}[theorem]{Lemma}
\newtheorem{proposition}[theorem]{Proposition}
\theoremstyle{remark}
\newtheorem{remark}[theorem]{Remark}
\numberwithin{equation}{section}
\renewcommand{\le}{\leqslant}
\newcommand{\bracn}{\langle n \rangle}
\newenvironment{acknowledgements}{%
  % Rename Abstract to Acknowledgements
  \begin{abstract}
}{%
  \end{abstract}
}
\title[Anomalous Regularization in Kazantsev-Kraichnan Model]{Anomalous Regularization in Kazantsev-Kraichnan Model}
\author[M. Bagnara]{Marco Bagnara}
\address{Scuola Normale Superiore, Piazza dei Cavalieri 7, 56126 Pisa, Italy}
\email{marco.bagnara@sns.it}
\author[F. Grotto]{Francesco Grotto}
\address{Dipartimento di Matematica, Università di Pisa, Largo Bruno Pontecorvo 5, 56127 Pisa, Italy}
\email{francesco.grotto@unipi.it}
\author[M. Maurelli]{Mario Maurelli}
\address{Dipartimento di Matematica, Università di Pisa, Largo Bruno Pontecorvo 5, 56127 Pisa, Italy}
\email{mario.maurelli@unipi.it}
\keywords{passive vector advection equation, Kazantsev-Kraichnan model, anomalous regularization}
\begin{document}

\begin{abstract}
This work investigates a passive vector field which is transported and stretched by a divergence-free Gaussian velocity field, delta-correlated in time and poorly correlated in space (spatially nonsmooth). Although the advection of a scalar field (Kraichnan's passive scalar model) is known to enjoy regularizing properties, the potentially competing stretching term in vector advection may induce singularity formation. We establish that the regularization effect is actually retained in certain regimes. While this is true in any dimension $d\ge 3$, it notably implies a regularization result for linearized 3D Euler equations with stochastic modeling of turbulent velocities, and for the induction equation in magnetohydrodynamic turbulence.
\end{abstract}

\maketitle

\noindent \textbf{MSC (2020):} 76F55, 76M35, 60H15, 76F25.

\tableofcontents

%%%%%%%%%%%%%%%%%%%%%%%%%%%%%%%%%%%%%%%%%%%%%
\section{Introduction}\label{sec:introduction}

Consider the 3D vector advection equations 
\begin{equation}\label{eq:VAE}
    \begin{cases}
        \partial_t \mf= \nabla\times(u \times\mf),\\
        \nabla\cdot \mf=0,
    \end{cases}
\end{equation}
where $u$ is a divergence-free velocity field. This model arises in at least two contexts: 

\begin{description}[labelindent=0cm, leftmargin=0.2cm]
    \item[Magnetic Induction] $\mf$ is the magnetic field generated by a charged fluid with velocity field $u$, assuming that Lorentz force is negligible (infinite electric conductivity);
    \item[Incompressible Euler Equations] $\mf$ is the vorticity field of a constant density fluid, the velocity field $u$ and $\mf$ satisfying $\mf = \nabla \times u$.
\end{description}

Regularity and singularity formation in this PDE model are important mathematical problems.
In the case of 3D Euler equations with smooth initial datum the PDE is locally well-posed,
but singularities can develop at finite time, see the celebrated 
work by Elgindi \cite{elgindi2021finite} on blow-up for $M\in C^\alpha$.
Writing the first equation in \eqref{eq:VAE} as
\begin{equation*}
    \partial_t \mf+ (u\cdot \nabla)\mf = (\mf\cdot \nabla) u,
\end{equation*}
the common understanding is that the stretching term $(\mf\cdot \nabla) u$ favors the development of singularities, while the advection term $(u\cdot \nabla)\mf$ can stabilize the flow \cite{ElgJeo2020b}.

Following the statistical approach to fluid mechanics, we can consider \emph{random} ensembles of velocity fields $u$ with \emph{prescribed} statistics and study the regularity properties of the (random) solution $\mf$. In this context, the regularity (especially for negative Sobolev indices) is not only related to classical well-posedness questions in PDEs, but also to the covariance function $\psi(x,y)=\E[\mf(x)\mf(y)]$ and the energy spectrum $E(\rho)$ of the solution. Indeed, at least when all statistics are isotropic, we have
\begin{align*}
    \E\|\mf\|_{\dot{H}^{-s}}^2 &\approx \int_{\R^3} \rho^{-2s} E(\rho) \dd \rho \\
    &\approx \iint_{\R^3\times \R^3} |x-y|^{-3+2s} \psi(x,y) \dd x\dd y.
\end{align*}

In order to perform explicit computations in the context of dynamo theory, Kazantsev \cite{kazantsev1968} proposed to model the turbulent velocity field with a Gaussian random vector field $u^\cK$, delta-correlated in time and having a power law covariance spectrum. 
The corresponding stochastic PDE, that we consider here in any space dimension $d\ge 2$,
\begin{equation}\label{eq:KK}
    \partial_t\mf +(u^\cK\cdot\nabla) \mf = (\mf\cdot\nabla) u^\cK
\end{equation}
is also known as \emph{Kazantsev-Kraichan model} \cite{vincenzi2002} because of its close relation with Kraichnan's passive scalar advection model introduced in the same year \cite{Kraichnan1968},
\begin{align}\label{eq:K_passive}
    \partial_t \theta +u^\cK\cdot\nabla \theta =0,
\end{align}
where a scalar field $\theta$ (e.g. the temperature of the fluid) is advected by a Gaussian random vector field $u^\cK$ of the same type of above. Both models in fact elaborate on works of Batchelor, see respectively \cite{batchelor1950,batchelor1959} for the advection of (magnetic) vector fields and scalars.

A particular case of the Kazantsev-Kraichnan model is when the Gaussian velocity field $u^\cK$ has covariance function given by
\begin{align}
\begin{aligned}\label{eq:covariance_intro}
    &\E[u^\cK(t,x)u^\cK(s,y)^\top] = \delta_0(t-s) Q(x-y),\\
    &\widehat{Q}(n) = \langle n \rangle^{-d-2\alpha} \left(I-\frac{n n^\top}{|n|^2}\right), \quad \alpha\in (0,1),
\end{aligned}
\end{align}
where $\langle n \rangle = (1+|n|^2)^{1/2}$. This covariance function corresponds to an almost $C^{\alpha}$ space regularity of $u^\cK$ and mimics the irregularity of a turbulent velocity field. In this \emph{nonsmooth} context, the irregularity of $u^\cK$ is a challenge to well-posedness and regularity theory for the Kazantsev-Kraichnan model \eqref{eq:KK}. This issues arises both from the transport and the stretching term, as it is well-known that an irregular deterministic vector field can produce non-uniqueness of solutions to the associated passive scalar equation (see e.g. \cite{DiPernaLions1989}).

For the Kraichnan model of passive scalars \eqref{eq:K_passive}, that is without stretching, well-posedness has been proved by E and Vanden-Eijnden \cite{EVan2001} and Le Jan and Raimond \cite{LeJRai2002}, but with a picture which is completely different from standard well-posedness of deterministic transport equations: at the Lagrangian level, as already understood in physical literature (e.g. Bernard, Gawedzki and Kupiainen \cite{BeGaKu1998}), particles undergo splitting, which in turns brings anomalous dissipation of the passive scalars, see Rowan \cite{Rowan2023} for a rigorous proof and a quantitative statement. Moreover, splitting brings an anomalous regularization of the passive scalars, as Coghi and Maurelli \cite{coghi2023existence} and Galeati, Grotto and Maurelli \cite{galeati2024anomalous} have shown (see also Eyink and Xin \cite{EyinkXin2000} on two-point motion regularization).

Compared to the passive scalar case, the addition of the stretching term in \eqref{eq:KK} can have a disruptive effect on the dynamics: the solution field $\mf$ is multiplied by $\nabla u^\cK$ which is only a distribution in space, and the $L^2$ norm of $\mf$ is no longer controlled; in fact we expect the $L^2$ norm to blow up (see \cref{sec:L2_blow_up}). At a (formal) Lagrangian level, $\mf$ is stretched by the derivative of the flow, namely
\begin{equation*}
    M(t,x) = D\Phi_t(\Phi_t^{-1}(x)) M(0,\Phi_t^{-1}(x)),
\end{equation*}
however the flow itself $\Phi$ (of particles driven by $u^\cK$) is not defined due to splitting. Hence the regularization effect coming from the transport term could be destroyed by the stretching effect, making the Kazantsev-Kraichnan model \eqref{eq:KK} ill-posed.

Our main result shows that, in a certain regime, this does not happen, and the Kazantsev-Kraichnan model \eqref{eq:KK} keeps an anomalous regularization property, for negative Sobolev-valued solutions:
\begin{theorem}[Informal statement, see \cref{thm:main} for the precise result]\label{thm:main_intro}
    Let $d\ge 3$, 
    $\alpha\in (0,\ap\wedge 1)$ and $s\in (\sma,\spa \wedge (d/2))$, where $\ap$, $\sma$ and $\spa$ are defined in \cref{hp:main_hp}. Let $u^\cK$ be a centered Gaussian velocity field with covariance function \eqref{eq:covariance_intro}. For every $\mf_0$ in $\dot{H}^{-s}_{\div =0}$, there exists a unique solution $\mf$ to \eqref{eq:KK} in the class $L^2_{t,\omega}(\dot{H}^{-s+1-\alpha}_{\div =0})$. This solution satisfies
    \begin{equation*}
        \sup_{t\in [0,T]}\E[\|\mf_t\|_{\dot{H}^{-s}}^2] +\eta_{d,s,\alpha}\int_0^T \E[\|\mf_r\|_{\dot{H}^{-s+1-\alpha}}^2] \dd r \le e^{\rho_{d,s,\alpha}T} \|\mf_0\|_{\dot{H}^{-s}}^2.
    \end{equation*}
\end{theorem}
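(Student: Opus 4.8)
The plan is to work entirely at the level of the covariance function $\psi_t(x,y) = \E[\mf_t(x)\otimes\mf_t(y)]$ (or equivalently its Fourier transform / the second-moment measure), turning the SPDE \eqref{eq:KK} into a deterministic, closed evolution equation for $\psi$. This is the standard miracle of delta-correlated-in-time (Kraichnan-type) noise: because $u^\cK$ is white in time, an It\^o–Stratonovich computation produces a closed second-order parabolic-type equation $\partial_t \psi = \scrL \psi$, where $\scrL$ is built from the covariance $Q$ together with the two first-order operators (advection $u^\cK\cdot\nabla$ and stretching $(\cdot\nabla)u^\cK$) appearing twice each, acting on the $x$ and $y$ variables. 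The operator $\scrL$ will contain an Eulerian ``dissipative'' piece coming from the diagonal $Q(0)$ (a Laplacian-like term), an ``interaction'' piece coming from $Q(x-y)$, plus zeroth-order terms from the stretching. The whole point is that the transport part of $\scrL$, when read on two-point functions, is a genuinely regularizing operator of order $2-2\alpha$ (this is the anomalous regularization already known for Kraichnan's passive scalar), and one must check that the stretching contributions are subordinate to it in the $\dot H^{-s}$ scale for the stated range of $s$ and $\alpha$.

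Concretely, I would proceed in the following steps. \textbf{Step 1 (reduction to the covariance equation).} Rigorously derive the closed equation for $\psi$, or rather work directly with the energy balance: test \eqref{eq:KK} against a suitable pairing, apply It\^o's formula to $\|\mf_t\|_{\dot H^{-s}}^2$, and take expectations; the martingale term vanishes and one is left with $\frac{d}{dt}\E\|\mf_t\|_{\dot H^{-s}}^2 = \E\langle \mf_t, \mathcal{G}_s \mf_t\rangle$ for an explicit deterministic bilinear form $\mathcal{G}_s$, the It\^o correction. \textbf{Step 2 (the key coercivity estimate).} Decompose $\mathcal{G}_s = -\mathcal{D}_s + \mathcal{R}_s$ where $-\mathcal{D}_s$ is the dissipative/regularizing part and $\mathcal{R}_s$ collects the remainder (stretching and lower-order interaction terms). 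Show $\mathcal{D}_s(\mf) \gtrsim \eta_{d,s,\alpha}\|\mf\|_{\dot H^{-s+1-\alpha}}^2$ — this is a Fourier-multiplier computation using the explicit $\widehat Q(n) = \langle n\rangle^{-d-2\alpha}(I - nn^\top/|n|^2)$, and the positivity of the quadratic form on divergence-free fields is exactly where the constraint $s < \spa\wedge(d/2)$ and $\alpha < \ap$ enter, presumably via a spherical-harmonic / angular-average argument analogous to \cite{galeati2024anomalous,coghi2023existence}. Then bound $|\mathcal{R}_s(\mf)| \le \rho_{d,s,\alpha}\|\mf\|_{\dot H^{-s}}^2 + \tfrac12\eta_{d,s,\alpha}\|\mf\|_{\dot H^{-s+1-\alpha}}^2$ by interpolation, absorbing half the dissipation; the lower bound $s > \sma$ is what makes this absorption possible. \textbf{Step 3 (Gr\"onwall and a priori bound).} Combining Steps 1--2 gives $\frac{d}{dt}\E\|\mf_t\|_{\dot H^{-s}}^2 + \tfrac12\eta_{d,s,\alpha}\E\|\mf_t\|_{\dot H^{-s+1-\alpha}}^2 \le \rho_{d,s,\alpha}\E\|\mf_t\|_{\dot H^{-s}}^2$, and Gr\"onwall yields the stated inequality (up to relabelling constants). \textbf{Step 4 (existence and uniqueness).} For existence, regularize: mollify $u^\cK$ (or add viscosity), obtain uniform-in-regularization bounds from Steps 1--3, and pass to the limit using the compactness furnished by the gain of $1-\alpha$ derivatives plus an Aubin--Lions argument in the time variable (the time regularity coming from the equation itself, tested against smooth divergence-free fields). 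For uniqueness, note the equation is linear in $\mf$, so the difference of two solutions with the same datum satisfies the same estimate with $\mf_0 = 0$, forcing $\mf \equiv 0$ in $L^2_{t,\omega}\dot H^{-s}$; one must only check that a general $L^2_{t,\omega}(\dot H^{-s+1-\alpha})$ solution has enough regularity to justify the It\^o computation of Step 1, which should follow from a density/approximation argument.

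The main obstacle is Step 2, specifically the sign and size of the stretching contribution to $\mathcal{G}_s$. Unlike the passive scalar case, the stretching term $(\mf\cdot\nabla)u^\cK$ is not skew-symmetric and its It\^o correction produces genuinely new terms — zeroth order in $\mf$ but built from $D^2 Q$, hence of the same differential order as the dissipative term near the diagonal — which can have the wrong sign and threaten to overwhelm the regularization. The crux is to show that, after projecting onto divergence-free fields and working in the $\dot H^{-s}$ norm, the bad part of the stretching is controlled by the good Kraichnan dissipation with a strictly positive leftover constant $\eta_{d,s,\alpha}$; this is precisely the computation that forces the admissible window $\alpha \in (0,\ap\wedge 1)$, $s\in(\sma,\spa\wedge(d/2))$ and is the technical heart of the paper. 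A secondary subtlety is that the natural energy space $\dot H^{-s}$ with $s>0$ means one is propagating a \emph{negative}-regularity norm, so all the Fourier multipliers must be handled on the Fourier side with care about low frequencies (hence the homogeneous spaces and, plausibly, a periodic or whole-space technical setup to avoid infrared issues).
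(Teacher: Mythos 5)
Your overall architecture coincides with the paper's: It\^o formula for $\|\mf_t\|_{\dot H^{-s}}^2$, a Fourier-side analysis of the It\^o correction exploiting isotropy of $Q$ and the divergence-free constraint $n\cdot\widehat{\mf}(n)=0$, viscous (or mollified) approximations with uniform bounds and stochastic compactness for existence, and a Gr\"onwall argument with an approximation step justifying the It\^o formula for uniqueness. However, there is a genuine gap at the heart of your Step 2. You propose to write the correction as $-\mathcal D_s+\mathcal R_s$ with $\mathcal D_s$ the transport (Kraichnan) dissipation and to control the stretching/mixed remainder by $|\mathcal R_s(\mf)|\le\rho\|\mf\|_{\dot H^{-s}}^2+\tfrac12\eta_{d,s,\alpha}\|\mf\|_{\dot H^{-s+1-\alpha}}^2$ ``by interpolation, absorbing half the dissipation,'' with $s>\sma$ enabling the absorption. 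This cannot work: the stretching and mixed It\^o terms are \emph{not} lower order relative to the dissipation. On the Fourier side all three contributions have the same leading symbol $|n|^{-2s+2-2\alpha}$; in the paper's notation the leading constants are $c_{tra}$, $c_{str}=\alpha(d+2\alpha)C_{d,s,\alpha}$ and $c_{mix}=2\alpha(s+\alpha-1)C_{d,s,\alpha}$, and the theorem holds precisely because the signed combination $\eta_{d,s,\alpha}=-c_{tra}-(d-1)c_{str}+2c_{mix}$ is strictly positive in the stated window (\cref{thm:main_bd_integral}). The parameter constraints $\alpha<\ap$, $\sma<s<\spa$ are the positivity region (an ellipse) of this explicit quadratic expression in $(s,\alpha)$; they do not arise from any absorption or smallness mechanism, and no interpolation between $\dot H^{-s}$ and $\dot H^{-s+1-\alpha}$ can substitute for the exact computation of these constants. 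You acknowledge this difficulty in your ``main obstacle'' paragraph, but the proposal leaves it unresolved (``presumably via a spherical-harmonic argument''), whereas it is exactly the content of the paper: reduce the matrix-valued symbols to scalar radial integrals using isotropy and $v\perp n$, then extract the large-$|n|$ asymptotics with the precise constants via Mellin transforms and a residue expansion, obtaining the pointwise bound $|v\cdot\mathbb H_{d,s,\alpha}(n)v+\eta_{d,s,\alpha}|n|^{-2s+2-2\alpha}|\le\rho_{d,s,\alpha}|n|^{-2s}$. Without this (or an equivalent exact evaluation, e.g.\ the self-similar computation of Section~2 made rigorous), neither the coercivity nor the admissible range of $(s,\alpha)$ is established, so the main statement is not proved.

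Two secondary remarks. For existence, the paper adds a viscosity $\nu\Delta$ and uses the monotone-SPDE framework for the approximations, then tightness in weighted Sobolev spaces on $\R^d$ plus Prokhorov--Skorokhod to pass to the limit; your ``Aubin--Lions in time'' needs to be upgraded to a stochastic compactness argument of this type (and on the whole space one must weight the spaces to get compact embeddings), but this is routine. For uniqueness, your plan (linearity plus the a priori estimate with zero datum, after justifying the It\^o formula for a general $L^2_{t,\omega}(\dot H^{-s+1-\alpha})$ solution) matches the paper's Section~7, where the justification is done with truncated Fourier kernels $G_\delta$, stopping times, and uniform bounds $|\mathbb K_\delta(n)|\lesssim|n|^{-2s+2-2\alpha}$; note this step again relies on the same symbol estimates, so it too hinges on the computation missing from your Step 2.
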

\begin{remark}
    For $d=3$, the conditions on $\alpha$ and $s$ become
\begin{equation*}
    \alpha\in \left(0,\frac12\right), \quad s\in \left(\sma,\frac32\right),\quad \sma=\frac 74 - \frac \alpha2 -\frac {\sqrt3}{4} \sqrt{3-4\alpha-4\alpha^2}.
\end{equation*}
\end{remark}

Note that the solutions to \eqref{eq:KK} from \cref{thm:main_intro} take values only in the space of distributions. One may ask what happens to the $L^2$ norm of the solution: the heuristic computations in \cref{sec:L2_blow_up} suggest that, even for a smooth initial condition, the $L^2$ norm blows up instantaneously.

Let us briefly explain the heuristics and the strategy of the proof of \cref{thm:main_intro}. For the heuristics, as in \cite{coghi2023existence}, we replace $Q$ by its first order approximation (so-called self-similar case):
\begin{equation}\label{eq:Q_approx}
    Q(0)-Q(x) \approx \beta_L |x|^{2\alpha}\frac{xx^\top}{|x|^2} +\beta_N |x|^{2\alpha}\left(I-\frac{xx^\top}{|x|^2}\right),\quad \beta_N = \left(1+\frac{2\alpha}{d-1}\right)\beta_L.
\end{equation}
This approximation, while not rigorous in computations, already allows to get a good understanding of the evolution of the model. We call $G_s$ the Green kernel associated with $(-\Delta)^{-s}$. We write the equation for $\E\|\mf\|_{\dot{H}^{-s}}^2=\E\langle G_s\ast \mf,\mf\rangle$, getting
\begin{align}\label{eq:heuristics}
    \frac{\dd}{\dd t}\E\|\mf_t\|^2_{\dot H^{-s}} 
    &= \E\langle \trace((Q(0)-Q) D^2G_s) \ast \mf, \mf\rangle \\ \notag
    &\quad +\E\langle \big((\trace (Q(0)-Q)) D^2 G_s  \big)\ast \mf ,\mf \rangle \\ \notag
    &\quad -2\E\langle ((Q(0)-Q) D^2G_s)\ast \mf, \mf \rangle \\ \notag
    &=: T_{tra}+T_{str}+T_{mix}    
\end{align}
The first term $T_{tra}$ in the right-hand side is due to the transport term, while the second and third terms $T_{str}$ and $T_{mix}$ come also from the stretching term. As known from \cite{coghi2023existence} and \cite{galeati2024anomalous}, for the transport term $T_{tra}$ we get a negative, regularizing contribution: for some $c>0$,
\begin{equation*}
    \E\langle \trace((Q(0)-Q) D^2G_s) \ast \mf, \mf\rangle = -c \E\langle G_{s-1+\alpha} \ast \mf, \mf\rangle = -c\E\|\mf_t\|^2_{\dot H^{-s+1-\alpha}}.
\end{equation*}
Compared to the computations for $T_{tra}$, a relevant difference is that the kernels $(\trace (Q(0)-Q)) D^2 G_s$ and $(Q(0)-Q) D^2G_s$ in the terms $T_{str}$ and $T_{mix}$ are matrix-valued, hence they do not need to have a sign. The key observation to overcome this issue is the following: by isotropy of $Q$ and $G_s$, each kernel can be written as
\begin{equation*}
    f(x)I+ a D^2G_{s+\alpha}
\end{equation*}
for suitable scalar function $f$ and real constant $a$; the kernel $D^2G_{s+\alpha}$, when tested with divergence-free functions, gives no contribution, hence we are reduced to the scalar case. After lengthy but elementary computations, we find
\begin{equation*}
    \frac{\dd}{\dd t}\E\|\mf_t\|^2_{\dot H^{-s}}  = -\tilde{\eta}\,\E\|\mf_t\|^2_{\dot H^{-s+1-\alpha}}
\end{equation*}
and the constant $\tilde{\eta}$ is positive if and only if the assumptions on $d$, $s$ and $\alpha$ in \cref{hp:main_hp} are met.

Making these heuristic computations rigorous requires a control, in suitable Sobolev norms, of the higher order terms in the approximation \eqref{eq:Q_approx}, something which is quite technical (it has been done in \cite{coghi2023existence} for the transport term and $s=1$, $d=2$). Instead, the rigorous proof takes the route in \cite{galeati2024anomalous}, controlling directly the right-hand side of \eqref{eq:heuristics} in Fourier modes by complex analysis tools. In this context, due again to isotropy, the Fourier transform of the kernels takes the form
\begin{equation*}
    g(n)I +h(n)\frac{nn^\top}{|n|^2}
\end{equation*}
for scalar functions $g$ and $h$, and the second terms gives no contribution, because $n\cdot \widehat{\mf}(n)=0$ due to the divergence-free condition on $\mf$.

The \emph{nonsmooth} Kazantsev-Kraichnan model has been studied in the physical literature, in relation to the so-called dynamo effect (namely when turbulent transport results in a substantial growth of the passive vector field), see the works by Kazantsev himself \cite{kazantsev1968}, Vergassola \cite{Vergassola1996} and Vincenzi \cite{vincenzi2002}, where the absence of dynamo effect is shown, at a physical level, for $\alpha\in (0,1/2)$ at least in the self-similar case.

In the mathematical literature, to our knowledge, our work is the first one dealing with the \emph{nonsmooth} Kazantsev-Kraichnan model. However, the case of a smooth noise has been previously considered. In \cite{BaxendaleRozovski1993}, Baxendale and Rozovskii characterized dynamo effect and intermittency of the smooth Kazantsev-Kraichnan model in terms of the properties of the associated Lagrangian flow. Holm \cite{holm2015variational} introduced a variational framework (the so-called stochastic advection by Lie transport) to derive (also) vector advection equation driven by random Gaussian delta-correlated vector fields (not necessarily with power law spectrum); this setting has been used also for nonlinear equations perturbed by stochastic vector advection, see e.g. Drivas, Holm and Leahy \cite{DrivasHolmLeahy2020} and Crisan, Flandoli and Holm \cite{CrisanFlandoliHolm2019}. More recently, Butori and Luongo \cite{ButoriLuongo2024} and Butori, Flandoli and Luongo \cite{ButoriFlandoliLuongo2024} have considered the Kazantsev-Kraichnan model with a supplementary viscosity in an It\^o-Stratonovich diffusion limit in $\T^3$ and a thin domain respectively. 
Such limit procedure, inspired by the the work \cite{Galeati2020}, involves a sequence of smooth noises formally approaching a singular one with covariance concentrated at the origin. 
They showed convergence to a determinist equation with an additional effective viscosity and alpha-term (related with the dynamo effect) based on the the dimension of the domain and the shape of the covariance. 
Finally, Coti Zelati and Navarro-Fern{\'a}ndez \cite{CotiZelatiNavarroFernandez2024} have considered a passive vector fields advected by a randomized ABC flow and have proved almost sure ideal kinematic dynamo by showing a chaotic behaviour of the Lagrangian flow.

The paper is organized as follows. In \cref{sec:Heuristic_Derivation}, we provide a heuristic derivation of the main result, analyze the range of parameters for which anomalous regularization occurs, and, in \cref{sec:L2_blow_up}, explain why we expect an instantaneous blow-up of the $L^2$ norm. In \cref{sec:Setting_and_Main_Result}, we rigorously introduce the Kazantsev-Kraichnan noise, study the transport plus stretching operator and conclude by defining the notion of solution and stating the main result of the paper, \cref{thm:main}. In \cref{sec:Viscous_approximations}, we show the existence and uniqueness of viscous approximations. \cref{sec:Main_Bound} is dedicated to proving the main bound of the work, which establish anomalous regularization in the \emph{nonsmooth} Kazantsev-Kraichnan model. In \cref{sec:Proof_Existence}, we prove the existence of solutions using a classical compactness and convergence argument, while in \cref{sec:ProofUniqueness} we establish pathwise uniqueness. \cref{sec:Appendix} contains a formal derivation of the It\^o-Stratonovich correction, along with auxiliary and technical results.

%%%%%%%%%%%%%%%%%%%%%%%%%%%%%%%%%%%%%%%%%%%%%

\subsection{Notation}
The Fourier transform of $f$ is denoted by
$\widehat{f}$ and we use the convention
    \begin{equation*}
    \widehat f(\xi)=(2\pi)^{-\frac{d}{2}}\int_{\R^d} f(x) e^{-i\xi\cdot x} \dd x,\quad
    f(x)=(2\pi)^{-\frac{d}{2}} \int_{\R^d} \widehat f(\xi) e^{i\xi\cdot x} \dd\xi.  
\end{equation*}
We recall (denoting by $\overline{\widehat{g}}$ the complex conjugate of $\widehat{g}$) that
\begin{equation*}
    \widehat{f\ast g} = (2\pi)^{\frac{d}{2}} \widehat f\,\widehat g, \quad \widehat{fg} = (2\pi)^{-\frac{d}{2}} \widehat{f}\ast\widehat{g}, \quad
    \langle f,g \rangle = \langle \widehat{f},\overline{\widehat{g}} \rangle.
\end{equation*}

For $s\in \R$, the inhomogeneous Sobolev space $H^s(\R^d)$ is defined by
\begin{equation*}
    H^s(\R^d) = \Big\{f\in \mathcal{S}'(\R^d): \hat f\in L^1_{loc}(\R^d),\,\| f\|_{H^s}^2 := \int_{\R^d} \langle \xi\rangle^{2s} |\hat f(\xi)|^2 \dd \xi <\infty\Big\},
\end{equation*}
where $\langle \xi\rangle \coloneqq (1+|\xi|^2)^\frac12$. Similarly, the homogeneous Sobolev space $\dot H^s(\R^d)$ is defined by replacing $\langle \xi \rangle$ by $|\xi|$ in the definition of $\| \cdot\|_{H^s}$; $\dot{H}^s(\R^d)$ is a Hilbert space for $s<d/2$.

The subscript $\div$ denotes the subspaces of divergence-free vector fields (in the distributional sense). For $k\in \R^d$, $k\neq 0$, the notation $P^\perp_k=I-kk^\top/|k|^2$ is used for the projection on the space orthogonal to $k$.

We often use the subscripts $t$ and $\omega$ to denote the functional spaces on $[0,T]$ and $\Omega$ respectively, while we use no subscript for the functional spaces on $\R^d$: for example $L^2_{t,\omega}(H^s_{\div})$, $L^\infty_t(L^2_\omega(H^s_{\div}))$ denote the spaces $L^2([0,T]\times \Omega;H^s_{\div}(\R^d))$, $L^\infty([0,T];L^2(\Omega;H^s_{\div}(\R^d)))$ respectively.

Given $H$, $H'$ Hilbert spaces, $L(H,H')$ denotes the space of linear bounded operators from $H$ to $H'$, while $\HS(H,H')$ denotes the space of Hilbert-Schmidt operators from $H$ to $H'$, endowed with the Hilbert-Schmidt norm $\|T\|_{\HS}:= \trace[T^*T]^{1/2}$.

%%%%%%%%%%%%%%%%%%%%%%%%%%%%%%%%%%%%%%%%%%%%%%%%%%%%%%%%%%
\section{Heuristic Derivation in the Statistically Self-similar Case}\label{sec:Heuristic_Derivation}

In the case of the so-called statistically self-similar Kraichnan noise heuristic computations allow to guess the anomalous regularity gain and the range of parameters for which it takes place.

Let us consider a passive divergence-free vector field $\mf$ with values in $\R^d$ which is transported and stretched by a random divergence-free velocity field $\dot  W$. We assume $W$ to be a spatially homogeneous Wiener process satisfying for $x,y \in \R^d$ and $s,t\ge 0$,
\begin{equation*}
	\E[W(t,x)W(s,y)^\top] = (t\wedge s) \,Q(x-y),
\end{equation*}
where $Q$ will be specified later. Consider the stochastic advection equation
\begin{equation}\label{eq:SPDE_Stratonovich_sss} 
	\dd \mf_t +  \circ \mathrm{d} W_t \cdot \nabla \mf_t - \mf_t \cdot \nabla \circ \mathrm{d} W_t=0,
\end{equation}
where $\circ$ denotes the Stratonovich stochastic integration. This is formally the correct interpretation of \eqref{eq:KK} for Gaussian vector fields delta-correlated in time (see \cref{subsec:transport_stretching}). The SPDE \eqref{eq:SPDE_Stratonovich_sss} can be converted, at least formally, in It\^o form,
\begin{equation}\label{eq:SPDE_Ito_sss}
	\dd \mf_t + \mathrm{d} W_t \cdot \nabla \mf_t - \mf_t \cdot \nabla \mathrm{d} W_t= \frac{c_0}{2} \Delta \mf_t \dd t,
\end{equation}
where $c_0\operatorname{I}=Q(0)$. \cref{sec:Ito-Stratonovich_correction} reports a formal derivation of the It\^o-Stratonovich correction.
We are interested in the temporal evolution of negative Sobolev norms of the vector field $\mf$. For $s\in(0,d/2)$
\begin{equation*}
    \|\mf\|^2_{\dot H^{-s}}=\|(-\Delta)^{-s/2}\mf\|^2_{L^2}= \langle G_s \ast \mf, \mf \rangle,
\end{equation*}
where $G_s(x)=c_{d,s}|x|^{-d+2s}$ is the Riesz potential. If we decompose the noise $\mathrm{d} W_t = \sum_{k} \sigma_k \dd W^k_t$ and apply It\^o formula, we obtain
\begin{multline}\label{eq:Ito_formula_sss}
    \dd \|\mf_t\|^2_{\dot H^{-s}} 
	+ 2 \sum_{k} \langle G_s \ast \mf_t, \sigma_k\cdot \nabla \mf_t - \mf_t \cdot \nabla \sigma_k \rangle \dd W^k_t- c_0 \langle G_s \ast \mf_t,\Delta \mf_t \rangle \dd t\\
	= \sum_{k} \langle G_s\ast (\sigma_k\cdot\nabla \mf_t-\mf_t\cdot\nabla\sigma_k),(\sigma_k\cdot\nabla \mf_t-\mf_t\cdot\nabla\sigma_k)\rangle \dd t.
\end{multline}
Being $G_s$ an even function, we have
\begin{multline*}
    \sum_{k} \langle G_s\ast (\sigma_k\cdot\nabla \mf-\mf\cdot\nabla\sigma_k),(\sigma_k\cdot\nabla \mf-\mf\cdot\nabla\sigma_k)\rangle \\
		= \sum_{k\ge 1} \langle G_s\ast (\sigma_k\cdot\nabla \mf),\sigma_k\cdot\nabla \mf\rangle - 2 \sum_{k\ge 1} \langle G_s\ast (\sigma_k\cdot\nabla \mf),\mf\cdot\nabla\sigma_k\rangle\\ 
		+\sum_{k\ge 1} \langle G\ast (\mf\cdot\nabla\sigma_k),\mf\cdot\nabla\sigma_k\rangle.
\end{multline*}
Then, leveraging the divergence-free condition for $\mf$ and $\sigma_k$, integration by parts, the parity of $G_s$ and $Q$ and an analogous to \cref{lem:sigma_k}, we obtain (formally)
\begin{align}
	&\sum_{k\ge 1} \langle G_s\ast (\mf\cdot\nabla\sigma_k),\mf\cdot\nabla\sigma_k\rangle = - \langle \big((\trace Q) D^2 G_s  \big)\ast \mf ,\mf \rangle, \label{eq:stretching_term_sss}\\
	&\sum_{k\ge 1} \langle G_s\ast (\sigma_k\cdot\nabla \mf),\sigma_k\cdot\nabla \mf\rangle = - \langle \trace(Q D^2G_s) \ast \mf, \mf\rangle, \label{eq:transport_term_sss}\\
	&\sum_{k\ge 1} \langle G_s\ast (\sigma_k\cdot\nabla \mf),\mf\cdot\nabla\sigma_k\rangle 
	= - \langle (Q D^2G_s)\ast \mf, \mf \rangle.\label{eq:mixed_term_sss}
\end{align}
We refer to \cref{lem:smooth_Ito_term} for a more detailed derivation of the previous identities.
Integrating by parts and recalling that $Q(0)=c_0 I$,
\begin{equation}\label{eq:laplacian_term_sss}
	c_0 \langle G_s \ast \mf_t,\Delta \mf_t \rangle = \langle \trace(Q(0)D^2G_s)\ast \mf,\mf \rangle
\end{equation}
Hence, substituting \eqref{eq:stretching_term_sss}-\eqref{eq:laplacian_term_sss} in \eqref{eq:Ito_formula_sss}, we conclude that 
\begin{align*}
	\begin{split}
		\dd \|\mf_t\|^2_{\dot H^{-s}} 
		&= \big( \langle \trace((Q(0)-Q) D^2G_s) \ast \mf, \mf\rangle\\
		&\quad- \langle \big((\trace Q) D^2 G_s  \big)\ast \mf ,\mf \rangle +2 \langle (Q D^2G_s)\ast \mf, \mf \rangle \big) \dd t +\dd N_t,
	\end{split}
\end{align*}
where $N_t$ is a local martingale. However, being $Q(0)$ a multiple of the identity and  $\langle D^2G\ast \mf, \mf\rangle=0$ by integration by parts and divergence-free condition, we can rewrite the previous identity as
\begin{align}\label{eq:time_evolution_sss}
	\begin{split}
		\dd \|\mf_t\|^2_{\dot H^{-s}} 
		&= \big(\langle \trace((Q(0)-Q) D^2G_s) \ast \mf, \mf\rangle\\
		&\qquad+\langle \big((\trace (Q(0)-Q)) D^2 G_s  \big)\ast \mf ,\mf \rangle\\
		&\qquad -2 \langle ((Q(0)-Q) D^2G_s)\ast \mf, \mf \rangle \big) \dd t +\dd M_t.\\
		&\eqqcolon - F_t \dd t +\dd M_t.
	\end{split}
\end{align}
Identity \eqref{eq:time_evolution_sss} highlights how the relevant quantity in the time evolution $\dot H^{-s}$ is the function $Q(0)-Q(\cdot)$. It is here that the statistically self-similar noise turns out to be very convenient to explicitly compute the right-hand side of \eqref{eq:time_evolution_sss}.

The statistically self-similar Kraichnan noise is formally obtained by replacing \eqref{eq:Q_spectrum} with
\begin{equation}\label{eq:Qsss_spectrum}
	\widehat{Q}(n) = |n|^{-d-2\alpha} \left(I-\frac{n n^\top}{|n|^2}\right).
\end{equation}
Due to the lack of integrability at the origin $Q(0) = +\infty$, which seems to invalidate many of the previous computations, starting for instance from the unboundedness of the It\^o-Stratonovich correction. However, as one can formally derive following e.g. \cite[Section 10]{LeJRai2002} and \cite[Section 6]{GGM2024}, we have
\begin{align} \label{eq:Q_decomposition_sss}
	Q(0)-Q(x) =\beta_L |x|^{2\alpha}  P_L(x) +\beta_N |x|^{2\alpha} P_N(x),
\end{align}
where we denoted
\begin{equation*}
	P_L(x)= \frac{xx^\top}{|x|^2}, \qquad P_N(x)= I - \frac{xx^\top}{|x|^2},
\end{equation*}
 and $\beta_L,\beta_N>0$ satisfy
\begin{equation}\label{eq:beta_LeN}
	\beta_N=\left(1+ \frac{2\alpha}{d-1}\right)\beta_L,
\end{equation}
enforcing the divergence-free condition. In addition, a direct computation shows
\begin{align} \label{eq:D2G_decomposition_sss}
	\begin{split}
		D^2 G_s (x) 
		&= |x|^{-d -2+ 2s} \big( \gamma_{s,L} P_L(x) + \gamma_{s,N}P_N(x) \big)\\
		&= |x|^{-d -2+ 2s} \big( (\gamma_{s,L}-\gamma_{s,N}) P_L(x)  + \gamma_{s,N}\, I  \big),
	\end{split}
\end{align}
where
\begin{align*}
	&\gamma_{s,L} = c_{d,s}(-d+2s)(-d-1+2s),\\
	&\gamma_{s,N} = c_{d,s}(-d +2s),\\
	&\gamma_{s,L} - \gamma_{s,N} = c_{d,s} (-d+2s)(-d-2+2s).
\end{align*}
We are interested in computing explicitly the term $F_t$ appearing in \eqref{eq:time_evolution_sss}. The computations are simplified thanks to the decomposition in longitudinal and normal components for both $Q$ \eqref{eq:Q_decomposition_sss} and $D^2G_s$ \eqref{eq:D2G_decomposition_sss}. We have
\begin{align}\label{eq:QDG_1}
	\begin{split}
	&(Q(0)-Q) D^2G_s (x)\\
	&\qquad= |x|^{-d-2+2s+2\alpha} \big(  \gamma_{s,L}\beta_L  P_L(x)+ \gamma_{s,N} \beta_N   P_N(x) \big)\\
	&\qquad= |x|^{-d-2+2s+2\alpha}\big(  (\gamma_{s,L}\beta_L - \gamma_{s,N} \beta_N) P_L(x) + \gamma_{s,N} \beta_N    I \big) ,
	\end{split}
\end{align}
hence, 
\begin{equation}\label{eq:QDG_2}
	\trace\big((Q(0)-Q) D^2G_s \big) (x) =  |x|^{-d-2+2s+2\alpha} \big(\gamma_{s,L} \beta_L +(d-1) \gamma_{s,N} \beta_N \big) .
\end{equation}
Secondly, 
\begin{align}\label{eq:QDG_3}
	\begin{split}
	&\big(\trace (Q(0)-Q)  \big) D^2G_s (x)\\
	&\qquad= |x|^{-d-2+2s +2\alpha} (\beta_L + (d-1) \beta_N )\big((\gamma_{s,L}-\gamma_{s,N}) P_L(x)+ \gamma_{s,N} I\big).
	\end{split}
\end{align}
By \eqref{eq:QDG_1}-\eqref{eq:QDG_3},
\begin{equation*}
	F_t 
	= \pi_1 \langle |x|^{-d-2+2s+2\alpha} I \ast \mf, \mf\rangle + \pi_2 \langle |x|^{-d-2+2s+2\alpha} P_L(x) \ast \mf, \mf\rangle,\\
\end{equation*}
where $\pi_1=\pi_1(d,s,\alpha)$ and $\pi_2=\pi_2(d,s,\alpha)$ are given by
\begin{align*}
	&\pi_1=-\gamma_{s,L}\beta_L+(4-2d)\gamma_{s,N}\beta_N - \gamma_{s,N} \beta_L,\\
	&\pi_2=\gamma_{s,L}\beta_L+ (d-3)\gamma_{s,N} \beta_N  -(d-1)\gamma_{s,L}\beta_N  + \gamma_{s,N}\beta_L.
\end{align*}
We define $\delta=\delta(d,s,\alpha)$ by imposing $\pi_2= \delta(\gamma_{s+\alpha,L}-\gamma_{s+\alpha,N})$.
By adding and subtracting $\pm \delta \gamma_{s+\alpha,N} \langle |x|^{-d-2+2s+2\alpha} I \ast \mf, \mf\rangle$ and recalling the decomposition \eqref{eq:D2G_decomposition_sss}, we obtain
\begin{align*}
	F_t & = ( \pi_1 - \delta \gamma_{s+\alpha,N})  \langle |x|^{-d-2+2s+2\alpha} I \ast \mf, \mf\rangle + \delta \langle D^2 G_{s+\alpha} \ast \mf, \mf\rangle\\
	& =  ( \pi_1 - \delta \gamma_{s+\alpha,N})  \langle |x|^{-d-2+2s+2\alpha} I \ast \mf, \mf\rangle\\
	& =  \tilde \eta_{d,s,\alpha}  \|\mf\|^2_{\dot H^{-s+1-\alpha}},
\end{align*}
where we used $\langle D^2 G_{s+\alpha} \ast \mf, \mf\rangle=0$ (by integration by parts and divergence free condition) and we defined
\begin{equation*}
	\tilde \eta_{d,s,\alpha} \coloneqq \frac{\pi_1 - \delta \gamma_{s+\alpha,N}}{c_{d,s+\alpha-1}},
\end{equation*}
provided that $s\in (1-\alpha, d/2)$ and $\alpha\in(0,1)$.
Therefore, taking the expectation in \eqref{eq:time_evolution_sss} we have
\begin{equation}\label{eq:main_formula_sss}
		\dd \E \|\mf_t\|^2_{\dot H^{-s}} + 
		 \tilde \eta_{d,s,\alpha}  \E\|\mf\|^2_{\dot H^{-s+1-\alpha}} \dd t =0,
\end{equation}
showing that the statistical behaviour of the model is determined by the sign of the coefficient $\tilde \eta_{d,s,\alpha}$. Performing some elementary manipulations, we obtain the following formulas
\begin{align*}
	&\pi_1=\beta_L \gamma_{s,N} \left(d-2s + \left ( 1+ \frac{2\alpha}{d-1}\right)(4-2d)\right)\eqcolon \beta_L \gamma_{s,N} \tilde \pi_1,\\
	&\pi_2=\beta_L \gamma_{s,N} \left( -d+2s + \left ( 1+ \frac{2\alpha}{d-1}\right)(d^2+d-2sd+2s-4) \right)\\
    &\quad\eqcolon \beta_L \gamma_{s,N} \tilde \pi_2,
\end{align*}
hence, we can write
\begin{equation*}
	\tilde \eta_{d,s,\alpha} = \frac{\beta_L \gamma_{s,N}}{c_{d,s+\alpha-1}} \left(\tilde \pi_1 - \frac{\tilde \pi_2}{-d-2+2s+2\alpha}\right).
\end{equation*}
For $s\in (1-\alpha, d/2)$ and $\alpha\in(0,1)$, being $\beta_L>0$, $c_{d,s+\alpha-1}>0$ and $\gamma_{s,N}<0$, it follows that $\tilde \eta_{d,s,\alpha}>0$ if and only if 
\begin{equation}\label{eq:condition_positive}
	f (d,s,\alpha)>0,
\end{equation}
where
\begin{align*}
	f (d,s,\alpha)  
	&\coloneqq (-d-2+2s+2\alpha) \tilde \pi_1(d,s,\alpha) - \tilde \pi_2(d,s,\alpha) \\
	&= \frac{-8(d-2) \alpha^2  -8(d-2)\alpha  (s-1) + 2(d-1)(s-1) (d-2s +2) }{d-1}
        %&= \frac{2}{(d-1)C_{d,s,\alpha}} \eta_{d,s,\alpha}.
\end{align*}
One can check that the above expression for $f(d,s,\alpha)$ is, up to a positive multiplicative constant, the expression for the constant $\eta_{d,s,\alpha}$ obtained in the main result \cref{thm:main_bd_integral} (precisely, $f(d,s,\alpha)=2\eta_{d,s,\alpha}/((d-1)C_{d,s,\alpha})$ with $C_{d,s,\alpha}$ as in \cref{thm:main_bd_integral}).

%--------------------------------------------------------------------------
\subsection{Regime of parameters responsible for anomalous regularization}\label{subsec:parameters}
Let us analyze the regime of parameters making $\eta_{d,s,\alpha}>0$ (where $\eta$ is given in \cref{thm:main_bd_integral}), or equivalently $f (d,s,\alpha)>0$.

For $d=2$, $f$ drastically simplifies losing its dependence on the parameter $\alpha$,
\begin{equation*}
	f(2,s,\alpha)= 4(s-1) (2-s),
\end{equation*}
which is always negative for $s\in (0,1)$.
On the other hand, for $d\ge 3$, $f (d,s,\alpha)$ is a quadratic function with negative concavity in both $s$ and $\alpha$. Hence, it is positive if and only if $\alpha$ and $s$ are in between their respective roots. Such a condition identifies an ellipse as illustrated in \cref{fig:ellipsoid}. An explicit computation of the roots of $f$ as function of $s$ shows that
\begin{align*}
	&\spma = \frac d4 + 1 - \alpha \frac{d-2}{d-1} \pm \frac{\sqrt{d}}{4(d-1)} \sqrt{\Delta^s_{d,\alpha}},\\
	&\qquad\Delta^s_{d,\alpha} = -16\alpha^2(d-2)+ d(d-1)^2-8\alpha(d^2-3d+2).
\end{align*}
Clearly, $\Delta^s_{d,\alpha}$ is quadratic function of $\alpha$ with negative concavity and its roots are given by
\begin{equation*}
	\apm = - \frac{d-1}{4} \pm \frac1 4 \sqrt{\frac{2(d-1)^3}{d-2}}.
\end{equation*}
On the other hand, if we compute the roots of $f$ as a function of $\alpha$ we obtain
\begin{equation*}
	\apms = -\frac{s-1}{2} \pm \frac12 \sqrt{\Delta^\alpha_{d,s}},\quad
	\Delta^\alpha_{d,s} = \frac{d(d-s)(s-1)}{d-2},
\end{equation*}
where $\Delta^\alpha_{d,s}$ is a quadratic function of $s$ with negative concavity and its roots are given by
\begin{equation*}
	\sm = 1, \qquad \sp  =d. 
\end{equation*}

\begin{figure}[h]
	\centering
	\begin{tikzpicture}[scale=1.5]
		% Shaded region
		\begin{scope}
			\clip (0,0) rectangle (2,1.5);
			\fill[orange!50, opacity=0.5] [rotate around={-37.98187826603676:(2.5,-0.75)}] (2.5,-0.75) ellipse (1.7687208533866974cm and 0.8995146151091893cm);
		\end{scope}
		
		% Grid and axis
		%	\draw[step=1cm, gray, very thin] (-2,-2) grid (6,4);
		\draw[thick,->] (-1,0) -- (5,0) node[right] {$s$};
		\draw[thick,->] (0,-3) -- (0,1.5) node[above] {$\alpha$};
		
		% Ellipse
		%	\draw[thick] (1,0) ellipse (2 and 1.5);
		\draw [thick, rotate around={-37.98187826603676:(2.5,-0.75)}] (2.5,-0.75) ellipse (1.7687208533866974cm and 0.8995146151091893cm);

		% Points and labels
		\fill (0,0) circle (1pt) node[below left] {$O$};
		
		\def\a{0.25}	
		\def\xA{{(6-2*\a - sqrt(9-12*\a-8*(\a)^2))/3}}
		\def\xB{{(6-2*\a + sqrt(9-12*\a-8*(\a)^2))/3}}
		\coordinate (A) at ( \xA,\a);
		\coordinate (B) at ( \xB,\a);
		\fill (A) circle (1pt);
		\fill (B) circle (1pt);
		\draw[semithick ] (A) -- (B);

		% Dashed lines
		\draw[dashed] (-0.3,{3/4*(sqrt(3)-1)}) -- (1.9,{3/4*(sqrt(3)-1)});
		\node[left] at (-0.3,{3/4*(sqrt(3)-1)}) {$\ap$};
		
		\draw[dashed] (-0.3,{3/4*(-sqrt(3)-1)}) -- (4.3,{3/4*(-sqrt(3)-1)});
		\node[left] at (-0.3,{3/4*(-sqrt(3)-1)}) {$\am$};
		
		\draw[dashed] (-0.3,\a) -- (A);
		\node[left] at (-0.3,\a) {$\alpha$};
		
		\draw[dashed] (\xA,-0.1) -- (\xA,1);
		\node[above] at (\xA,1) {$\sma$};
		\draw[dashed] (\xB,-0.1) -- (\xB,1);
		\node[above] at (\xB,1) {$\spa$};
		
		\draw[dashed] (1,-2.3) -- (1,0.08);
		\node[below] at (1,-2.3) {$1$};
		
		\draw[dashed] (4,-2.3) -- (4,0.08);
		\node[below] at (4,-2.3) {$d$};
		
		\draw[dashed, thick] (2,-0.3) -- (2,0.8);
		\node[below] at (2,-0.3) {$\frac d2 $};
		
		\draw[dashed] (3,-0.3) -- (3,0.1);
		\node[below] at (3,-0.3) {$\frac d2+1 $};
	\end{tikzpicture}
	\caption{Ellipse satisfying $\eta_{d,s,\alpha}=0$; in orange the region of parameter providing anomalous regularization.}
	\label{fig:ellipsoid}
\end{figure}
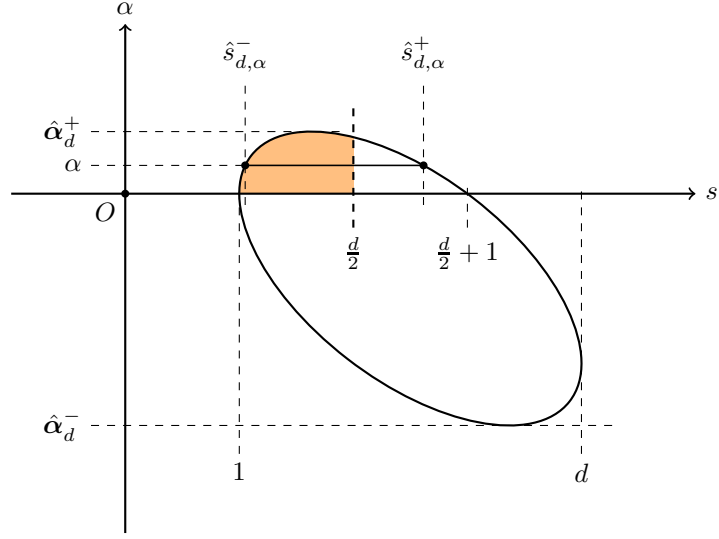

Clearly $\am<0$ and $\ams<0$ (for $s\in (\sm,\sp)$) being the sum of negative terms. In addition, it easy to see that $\hat{\bm{\alpha}}^+_3= 1/2$, $\ap> 1/2$ for $d>3$ and $\ap \uparrow + \infty$ as $d\to +\infty$. Note that $\ap$ is \textit{reached} by $\sma\big|_{\alpha=\ap}=\spa\big|_{\alpha=\ap}<d/2$ for $d\ge 4$ and by $s=3/2$ for $d=3$. Moreover, $\aps>0$ if and only if $s\in(1,d/2+1)$.

Since the dimension $d=3$ plays a crucial role in the applications of the model, we conclude by writing the formula for $\hat s_{d,\alpha}$ in this particular case
\begin{equation*}
	\hat s_{3,\alpha}^\pm = \frac 74 - \frac \alpha2 \pm \frac {\sqrt3}{4} \sqrt{3-4\alpha-4\alpha^2}, \qquad \alpha\in\left(0,\frac 12\right).
\end{equation*}

\subsection{Istantaneous blow-up of the $L^2$ norm}\label{sec:L2_blow_up}
We now provide some additional heuristic computations suggesting the istantaneous blow-up of the $L^2$ norm of $\mf$. We remark that this issue is linked to the difficulty discussed in \cite[Section 6]{FlandoliLuo2021} and the following computations are the $\R^d$ counterpart of what there is presented on the torus $\T^3$. However, a difference is that in \cite{FlandoliLuo2021} the authors are interested in a scaling limit procedure of smooth-in-space noises (this procedure is prevented there by the inability to uniformly control the $L^2$ norm), while here we are somehow already studying a limit equation.

 By applying the It\^o formula, we derive
\begin{multline*}
    \dd \|\mf_t\|^2_{L^2} +2 \sum_k \langle \mf_t, \sigma_k \cdot \nabla \mf_t- \mf_t\cdot \nabla \sigma_k \rangle  \dd W^k_t\\
	=c_0 \langle \mf_t, \Delta\mf_t \rangle \dd t +\sum_{k} \langle \sigma_k \cdot \nabla \mf_t- \mf_t\cdot \nabla \sigma_k, \sigma_k \cdot \nabla \mf_t- \mf_t\cdot \nabla \sigma_k\rangle \dd t.
\end{multline*}
First, note that the term
\begin{equation*}
	c_0 \langle \mf_t, \Delta\mf_t \rangle = - c_0 \|\nabla \mf_t\|^2_{L^2}
\end{equation*}
is perfectly compensated by the term
\begin{align*}
	\sum_k \langle \sigma_k \cdot \nabla \mf_t, \sigma_k \cdot \nabla \mf_t \rangle 
	&= \sum_k \int_{\R^d} \sigma_k^l(x) \partial_l \mf^i_t(x) \sigma_k^m (x) \partial_m \mf^i_t (x) \dd x\\
	&=\int_{\R^d} Q^{lm}(0)\partial_l \mf^i_t(x)  \partial_m \mf^i_t (x) \dd x\\
	&=c_0\int_{\R^d} \delta_{lm}\partial_l \mf^i_t(x)  \partial_m \mf^i_t (x) \dd x= c_0 \|\nabla \mf_t\|^2_{L^2}.
\end{align*}
Then, we have
\begin{align*}
	-2 \sum_k \langle \sigma_k \cdot \nabla \mf_t,\mf_t \cdot \nabla \sigma_k \rangle 
	&= -2 \sum_{k} \int_{\R^d} \sigma_k^l(x) \partial_l \mf^i_t(x) \mf^m_t (x) \partial_m \sigma_k^i (x) \dd x\\
	&= 2 \partial_m Q^{l1}(0) \int_{\R^d}  \partial_l \mf^i_t(x) \mf^m_t (x) \dd x = 0,
\end{align*}
where we used that
\begin{equation*}
	\sum_{k} \sigma_k^l(x) \partial_m \sigma_k^i (x) 
	= \sum_{k} \partial_{y_m} (\sigma_k^l(x) \sigma_k^i(y)) \big|_{y=x} = - \partial_m Q^{li}(0)=0
\end{equation*}
and the last equality is formally justified by $Q$ being an even function (although here not differentiable). Ignoring the martingale part, the blow-up issue comes from the remaining term. We have
\begin{align*}
	\sum_{k} \langle \mf_t \cdot \nabla \sigma_k, \mf_t \cdot \nabla \sigma_k \rangle 
	&= \sum_{k} \int_{\R^d} \mf_t^l(x) \partial_l \sigma_k^i(x) \mf^m_t (x) \partial_m \sigma_k^i (x) \dd x\\
	&= - \partial_l \partial_m \trace Q (0) \int_{\R^d} \mf_t^l(x) \mf^m_t (x)  \dd x,
\end{align*}
where we used
\begin{equation*}
	\sum_{k} \partial_l \sigma_k^i(x) \partial_m \sigma_k^i (x) 
	=\sum_{k} \partial_{x_l} \partial_{y_m} (\sigma_k^i(x)  \sigma_k^i (y) )\big|_{y=x}
	= -\partial_l \partial_m \trace Q(0). 
\end{equation*}
However, being
\begin{equation*}
	-\partial_l \partial_m \trace Q(0) = -i^2 (d-1)\int_{\R^d} k^l k^m \langle k \rangle ^{-d-2\alpha} \dd k,
\end{equation*}
we formally have
\begin{equation*}
	-\partial_l \partial_m \trace Q(0) = \delta_{lm} (d-1) \int_{\R^d} |k|^2 \langle k \rangle ^{-d-2\alpha} \dd k =  \delta_{lm} \cdot (+\infty),
\end{equation*}
which suggests the instantaneous blow-up of the $L^2$ norm of $\mf$.

We observe that the previous computations can also be obtained by formally substituting $s=0$ in \cref{lem:HS_homogeneous}. One can reach the same formula after some manipulations and recalling the divergence-free condition satisfied by $\mf$.

%%%%%%%%%%%%%%%%%%%%%%%%%%%%%%%%%%%%%%%%%%
\section{Rigorous Setting and Main Result}\label{sec:Setting_and_Main_Result}

%--------------------------------------------------------
\subsection{Incompressible nonsmooth Kazantsev-Kraichnan noise}
On a filtered probability space $(\Omega,\cA,(\cF_t)_{t\ge0}, \P)$ satisfying the standard assumption, we consider a $\R^d$-valued and $(\cF_t)_t$-adapted spatially homogeneous Wiener process $W^\cK$ (see \cite[Section 4.1.4]{DaPratoZabczyk2014}). We assume that $W^{\cK}$ has covariance function $Q$, i.e., for $x,y \in \R^d$ and $s,t\ge 0$,
\begin{equation*}
	\E[W^\cK(t,x)W^\cK(s,y)^\top] = (t\wedge s) \,Q(x-y),
\end{equation*}
where the function $Q:\R^d \to \R^{d\times d}$ has Fourier spectrum
\begin{equation}\label{eq:Q_spectrum}
	\widehat{Q}(n) = \bracn^{-d-2\alpha} \left(I-\frac{n n^\top}{|n|^2}\right) = \bracn^{-d-2\alpha} P^\perp_k.
\end{equation}
The Kraichnan velocity field $u^\cK$ is formally given by the time derivative of $W^\cK$
\begin{equation*}
	u^\cK \coloneqq \frac{\mathrm{d}W^\cK}{\mathrm{d}t}.
\end{equation*}

In this work, we shall consider $\alpha \in (0,1)$; as mentioned in the introduction, such slow decay in Fourier modes corresponds to a rough (in space) velocity field, precisely $W_t$ belongs to $C^{\alpha-\epsilon}_{loc}$ for every $\eps>0$. The projection onto vectors orthogonal to $n$ is the Fourier representation of the Leray projection, and is responsible for making $W^\cK$ divergence free.

For every $\varphi,\psi \in \cS(\R^d)$, the spatially homogenous Wiener process $W^\cK$ is characterized by
\begin{equation*}
	\E \left[ \langle W^\cK_t,\varphi\rangle\ \langle W^\cK_s,\psi \rangle \right] = (t \wedge s) \, \langle Q \ast \varphi, \psi \rangle = (t \wedge s) \, \langle \cQ \varphi, \psi \rangle,
\end{equation*}
where the covariance operator $\cQ$ is defined by
\begin{equation*}
	\cQ \varphi (x) \coloneqq (Q\ast \varphi)(x) = \int_{\R^d} Q(x-y) \varphi(y) \dd y,
\end{equation*}
that is the Fourier multiplier operator associated with \eqref{eq:Q_spectrum}. Since $\hat{Q}$ is a linear bounded operator on $L^2(\R^d)$, we can regard $W^\cK$ as a generalized Wiener process in $L^2(\R^d)$ (in the sense of \cite[Section 4.1.2]{DaPratoZabczyk2014}). Note that, as a consequence of working on the whole $\R^d$ (instead of $\T^d$), $\cQ$ is not a trace-class operator (see \cref{lem:Q_not_trace_class}) and does not take values in $L^2(\R^d)$, but only in $L^2_\loc(\R^d)$.

It is easy to recognize $\cQ^\frac12$ as the Fourier multiplier operator associated to the multiplier
\begin{equation*}
	m(n) = \bracn^{-\frac d2-\alpha} \left(I-\frac{n n^\top}{|n|^2}\right).
\end{equation*}
Set $U_0\coloneqq \cQ^\frac12 \left( L^2(\R^d) \right)$ to be the so-called reproducing kernel associated to $W^\cK$;  $U_0$ is endowed with the norm $\|u\|_{U_0} \coloneqq \| \cQ^{-\frac12} u \|_{L^2}$, where we denoted by $\cQ^{-\frac 12}$ the pseudo-inverse of $\cQ^{\frac 12}$. The following classical lemma characterize the reproducing kernel of $W^\cK$, for completeness we give a proof in the appendix:
\begin{lemma}\label{lem:rep_kernel}
    We have
    \begin{equation*}
        U_0\coloneqq \cQ^\frac12 \left( L^2(\R^d) \right)=H^{\frac d2 + \alpha}_{\div}
    \end{equation*}
    and 
    \begin{equation*}
        \cQ^{-\frac12} \left(H^{\frac d2 + \alpha}_{\div}\right) = L^2_{\div},
    \end{equation*}
    In particular, $\|u\|_{U_0}=\|u\|_{H^{\frac d2 + \alpha}}=\| \cQ^{-\frac 12 }u\|_{L^2}$.
\end{lemma}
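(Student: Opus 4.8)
The plan is to work entirely on the Fourier side, where $\cQ^{1/2}$ is the matrix-valued Fourier multiplier with symbol $m(n)=\bracn^{-d/2-\alpha}P^\perp_n$, and to exploit that $P^\perp_n=I-nn^\top/|n|^2$ is, fiberwise in $n$, the orthogonal projection of $\C^d$ onto $n^\perp=\set{v\in\C^d:v\cdot n=0}$ — precisely the ``divergence-free fiber'', since $u\in L^2_{\div}$ if and only if $\widehat u(n)\cdot n=0$ for a.e.\ $n$. Since $\abs{m(n)v}=\bracn^{-d/2-\alpha}\abs{P^\perp_n v}\le\abs{v}$, the operator $\cQ^{1/2}$ is bounded on $L^2(\R^d)$; in fact, as the computation below shows, it maps $L^2(\R^d)$ into $H^{d/2+\alpha}_{\div}$, so all expressions in the statement make sense.

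First I would prove $\cQ^{1/2}(L^2(\R^d))\subseteq H^{d/2+\alpha}_{\div}$: if $u=\cQ^{1/2}g$ with $g\in L^2$, then $\widehat u(n)=\bracn^{-d/2-\alpha}P^\perp_n\widehat g(n)$, so $\widehat u(n)\perp n$ a.e.\ (hence $u$ is divergence-free) and $\bracn^{d/2+\alpha}\abs{\widehat u(n)}=\abs{P^\perp_n\widehat g(n)}\le\abs{\widehat g(n)}$, whence $\norm{u}_{H^{d/2+\alpha}}\le\norm{g}_{L^2}<\infty$. Conversely, given $u\in H^{d/2+\alpha}_{\div}$, set $\widehat g(n):=\bracn^{d/2+\alpha}\widehat u(n)$; then $g\in L^2$ with $\norm{g}_{L^2}=\norm{u}_{H^{d/2+\alpha}}$, and since $u$ is divergence-free we have $P^\perp_n\widehat u(n)=\widehat u(n)$ a.e., so $m(n)\widehat g(n)=\bracn^{-d/2-\alpha}P^\perp_n\bracn^{d/2+\alpha}\widehat u(n)=\widehat u(n)$, i.e.\ $u=\cQ^{1/2}g$. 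This establishes $U_0=H^{d/2+\alpha}_{\div}$.

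For the pseudo-inverse I would identify the kernel of the self-adjoint operator $\cQ^{1/2}$: $\cQ^{1/2}g=0$ iff $P^\perp_n\widehat g(n)=0$ a.e.\ iff $\widehat g(n)\in\operatorname{span}(n)$ a.e., so $\ker\cQ^{1/2}$ is the space of (distributional) gradient fields; by the fiberwise orthogonal splitting $\C^d=\operatorname{span}(n)\oplus n^\perp$ together with $\langle f,g\rangle=\langle\widehat f,\overline{\widehat g}\rangle$, its orthogonal complement is exactly $L^2_{\div}$. The Moore--Penrose pseudo-inverse $\cQ^{-1/2}$ sends $u\in U_0$ to the unique $g\in(\ker\cQ^{1/2})^\perp=L^2_{\div}$ with $\cQ^{1/2}g=u$; the element $g$ constructed above, $\widehat g(n)=\bracn^{d/2+\alpha}\widehat u(n)$, lies in $L^2_{\div}$ whenever $u$ does (because $\widehat u(n)\perp n$ a.e.), hence it \emph{is} that unique element. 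Therefore $\cQ^{-1/2}(H^{d/2+\alpha}_{\div})=L^2_{\div}$ and $\norm{\cQ^{-1/2}u}_{L^2}=\norm{g}_{L^2}=\norm{u}_{H^{d/2+\alpha}}$, which also gives $\norm{u}_{U_0}=\norm{u}_{H^{d/2+\alpha}}$ by the definition of the $U_0$-norm.

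I do not expect a serious obstacle; the only points needing care are (i) justifying the matrix-multiplier manipulations (in particular $P^\perp_nP^\perp_n=P^\perp_n$ and boundedness) at the level of $L^2$ functions via Plancherel, and (ii) handling the pseudo-inverse correctly, i.e.\ verifying the Helmholtz-type fiberwise decomposition so that $(\ker\cQ^{1/2})^\perp$ is genuinely $L^2_{\div}$ and the selected preimage is the Moore--Penrose one. Both are routine.
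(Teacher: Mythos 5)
Your proof is correct and follows essentially the same route as the paper's: a Fourier-side analysis exploiting that $P^\perp_n$ is exactly the projection onto the divergence-free fiber, together with the Helmholtz--Leray decomposition to identify the minimal-norm preimage (equivalently, the element of $(\ker\cQ^{1/2})^\perp=L^2_{\div}$) as $\widehat g(n)=\bracn^{d/2+\alpha}\widehat u(n)$. The only cosmetic difference is that the paper packages the symbol computation as a factorization $\cQ^{1/2}=\cT\cL$ with $\cT$ an isometric bijection $H^s_{\div}\to H^{s+d/2+\alpha}_{\div}$, whereas you manipulate the multiplier directly.
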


In the sequel, we make use of a (classical) series representation of the noise, as specified by the next lemma:
\begin{lemma}\label{lem:noise_series}
	Given any orthonormal basis $\{e_k\}_{k\ge 1}$ of $L^2_{\div} (\R^d)$,  or, equivalently, any orthonormal basis $\{\sigma_k = \cQ^\frac 12 e_k\}_{k\ge 1}$ of $U_0$, there exists a sequence $\{W^k\}_{k\ge 1}$ of independent one-dimensional $(\cF_t)_{t}$-Brownian motions such that
	\begin{equation*}
		W^\cK_t 
		= \sum_{k\ge1} \sigma_k W^k_t 
		=\sum_{k\ge1} \cQ^\frac 12 e_k W^k_t.
	\end{equation*}
	In particular, if $U_0$ embeds in a Hilbert space $U_1$ with a Hilbert-Schmidt embedding, the series is convergent and defines an $U_1$-valued Wiener process.
\end{lemma}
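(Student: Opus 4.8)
The plan is to deduce this from the classical spectral (Karhunen–Loève) representation of a generalized Wiener process, in the form adapted to the spatially homogeneous setting of \cite[Section 4.1]{DaPratoZabczyk2014}, using \cref{lem:rep_kernel} to keep track of how the isometries $\cQ^{\pm 1/2}$ relate the Sobolev scales $L^2_{\div}$, $H^{d/2+\alpha}_{\div}=U_0$ and $H^{-d/2-\alpha}_{\div}$. There are three steps: extract the scalar Brownian motions $W^k$ out of $W^\cK$; check $W^\cK_t=\sum_k\sigma_k W^k_t$ after pairing with Schwartz fields; and upgrade the convergence to a Hilbert space $U_1$ into which $U_0$ embeds with a Hilbert–Schmidt embedding.

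First I would construct the $W^k$. For $\varphi\in\cS_{\div}(\R^d)$ the real process $\langle W^\cK_\cdot,\varphi\rangle$ is centered Gaussian and continuous in $t$, with $\E[\langle W^\cK_t,\varphi\rangle\langle W^\cK_s,\psi\rangle]=(t\wedge s)\langle\cQ\varphi,\psi\rangle=(t\wedge s)\langle\cQ^{1/2}\varphi,\cQ^{1/2}\psi\rangle_{L^2}$; in particular it is a Brownian motion with variance parameter $\langle\cQ\varphi,\varphi\rangle=\|\cQ^{1/2}\varphi\|_{L^2}^2=\|\varphi\|_{H^{-d/2-\alpha}}^2$ for divergence-free $\varphi$, by the mapping properties of $\cQ^{1/2}$ recorded in \cref{lem:rep_kernel}. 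Hence $\varphi\mapsto\langle W^\cK_\cdot,\varphi\rangle$, with $\|\langle W^\cK_t,\varphi\rangle\|_{L^2(\Omega)}=\sqrt t\,\|\varphi\|_{H^{-d/2-\alpha}}$, extends by density of $\cS_{\div}$ in $H^{-d/2-\alpha}_{\div}$ and Doob's inequality to a linear map $h\mapsto\langle W^\cK_\cdot,h\rangle$ on $H^{-d/2-\alpha}_{\div}$ with values in continuous, $(\cF_t)$-adapted processes having independent increments and covariance $(t\wedge s)\langle h,h'\rangle_{H^{-d/2-\alpha}}$ — again scaled Brownian motions. By \cref{lem:rep_kernel}, $\varphi_k:=\cQ^{-1/2}e_k=\cQ^{-1}\sigma_k$ lies in $H^{-d/2-\alpha}_{\div}$ and $\{\varphi_k\}_{k\ge1}$ is an orthonormal basis of it; putting $W^k_t:=\langle W^\cK_t,\varphi_k\rangle$ one gets a jointly Gaussian, centered, continuous family with $\E[W^j_tW^k_s]=(t\wedge s)\langle\varphi_j,\varphi_k\rangle_{H^{-d/2-\alpha}}=(t\wedge s)\delta_{jk}$, i.e. a sequence of independent standard $(\cF_t)$-Brownian motions.

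Next I would establish the identity. Fixing $\varphi\in\cS_{\div}$ and expanding $\varphi=\sum_k\langle\varphi,\varphi_k\rangle_{H^{-d/2-\alpha}}\varphi_k$ in $H^{-d/2-\alpha}_{\div}$, linearity and continuity of $h\mapsto\langle W^\cK_\cdot,h\rangle$, together with self-adjointness of $\cQ^{1/2}$ on $L^2$ (which gives $\langle\varphi,\varphi_k\rangle_{H^{-d/2-\alpha}}=\langle\cQ^{1/2}\varphi,e_k\rangle_{L^2}=\langle\varphi,\sigma_k\rangle_{L^2}$), yield $\langle W^\cK_t,\varphi\rangle=\sum_k\langle\sigma_k,\varphi\rangle_{L^2}W^k_t$, the series converging in $L^2(\Omega)$ since $\sum_k|\langle\sigma_k,\varphi\rangle_{L^2}|^2=\|\cQ^{1/2}\varphi\|_{L^2}^2<\infty$ and the $W^k$ are independent. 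Both sides being continuous in $t$, the identity holds for all $t\ge0$ simultaneously $\P$-a.s.; as $\varphi$ runs over $\cS_{\div}$, this is precisely $W^\cK_t=\sum_k\sigma_k W^k_t$ as $\cS'_{\div}$-valued (equivalently $L^2_\loc$-valued) processes.

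Finally, for the "in particular" clause: when $U_0$ embeds into a Hilbert space $U_1$ with Hilbert–Schmidt embedding $\iota$, the orthonormal basis $\{\sigma_k\}$ of $U_0$ gives $\sum_k\|\iota\sigma_k\|_{U_1}^2=\|\iota\|_{\HS(U_0,U_1)}^2<\infty$, whence $\E\| \sum_{k\ge N}\iota(\sigma_k)W^k_t \|_{U_1}^2=t\sum_{k\ge N}\|\iota\sigma_k\|_{U_1}^2\to0$; with Doob's maximal inequality this yields convergence of $\sum_k\iota(\sigma_k)W^k_\cdot$ in $L^2(\Omega;C([0,T];U_1))$, and the limit — centered, Gaussian, path-continuous, with independent increments and the covariance inherited from $W^\cK$ — is a $U_1$-valued Wiener process, which by the mode-wise identity above (applied to elements of $U_1^*$) is the $U_1$-realization of $W^\cK$. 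I do not expect a real obstacle here: this is the classical spectral representation, and the only care needed is the bookkeeping forced by $\cQ$ not being trace class on $L^2(\R^d)$ — the series does not converge in $L^2(\R^d)$ itself, which is exactly why the auxiliary Hilbert–Schmidt-embedded space $U_1$ is introduced — together with the justification that the pairing $\langle W^\cK_t,\cdot\rangle$ extends beyond Schwartz fields and that testing against the dense family $\cS_{\div}$ recovers all of $W^\cK$.
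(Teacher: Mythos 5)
Your proof is correct and takes essentially the same route as the paper: the paper's proof of this lemma is just a citation of \cite[Propositions 4.7 and 4.8]{DaPratoZabczyk2014}, and your argument is exactly that classical spectral-representation proof written out in the present setting (scalar Brownian motions from pairing with the dual basis, mode-wise identification on $\cS_{\div}$, and the Hilbert--Schmidt embedding for convergence in $U_1$). One minor notational caution: $\cQ^{-1/2}$ in the paper denotes the pseudo-inverse of $\cQ^{1/2}$, defined only on $U_0=H^{d/2+\alpha}_{\div}$, so your ``$\varphi_k=\cQ^{-1/2}e_k$'' for $e_k\in L^2_{\div}$ should be read as the Fourier-multiplier isometry mapping $L^2_{\div}$ onto $H^{-d/2-\alpha}_{\div}$ (the operator $\cT^{-1}$ from the proof of \cref{lem:rep_kernel} composed with the Leray projection); the dual basis you intend is well defined, but the notation as written conflicts with the paper's definition.
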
 
\begin{proof}
	See \cite[Proposition 4.7 and 4.8]{DaPratoZabczyk2014}.
\end{proof}

The following lemma links any orthonormal basis $\{\sigma_k\}_{k\ge 1}$ of $H^{\frac d2+\alpha}_{\div}$ with the convolution kernel $Q$.

\begin{lemma} \label{lem:sigma_k}
	Let $\{\sigma_k\}_{k\ge1}$ be any orthonormal basis of $U_0=H^{\frac d2 +\alpha}_{\div}$. Then, for every $x,y \in \R^d$ we have
	\begin{equation*}
		Q(x-y)=\sum_{k\ge1} \sigma_k(x) \otimes \sigma_k(y),
	\end{equation*}
	where the series converges absolutely and uniformly on compact sets, and
	\begin{equation*}
		|Q(x-y)| \le |Q(0)| < \infty.
	\end{equation*}
\end{lemma}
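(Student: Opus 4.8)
The plan is to recognize that $U_0=H^{\frac d2+\alpha}_{\div}$, being a Hilbert space of (divergence-free) vector fields on which point evaluations are bounded linear functionals, is a matrix-valued reproducing kernel Hilbert space, and that its reproducing kernel is precisely $(x,y)\mapsto Q(x-y)$; the claimed identity is then nothing but the expansion of the reproducing kernel in an arbitrary orthonormal basis. The one genuinely useful input is that $\frac d2+\alpha>\frac d2$: by Sobolev embedding $H^{\frac d2+\alpha}(\R^d)$ embeds continuously into $C_b(\R^d)$, so for every $x\in\R^d$ and $i\in\{1,\dots,d\}$ the evaluation $\ell^i_x\colon u\mapsto u^i(x)$ is a bounded linear functional on $U_0$, with operator norm independent of $x$ by translation invariance of the $H^s$-norm; in particular each $\sigma_k$ is a continuous function.

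First I would invoke Riesz representation: there is $g^i_x\in U_0$ with $\langle u,g^i_x\rangle_{U_0}=u^i(x)$ for every $u\in U_0$. Using $\|\cdot\|_{U_0}=\|\cQ^{-\frac12}\cdot\|_{L^2}$ from \cref{lem:rep_kernel} together with $\widehat Q(n)=\bracn^{-d-2\alpha}(I-nn^\top/|n|^2)$, a direct computation in Fourier variables identifies $g^i_x$ with the $i$-th column of $Q(\cdot-x)$ (internal consistency of the reproducing property pins down the normalization constant, and one uses that $Q$ is real, even and symmetric). Consequently
\begin{equation*}
\sum_{i=1}^d\|g^i_x\|_{U_0}^2=\trace Q(0)<\infty,
\end{equation*}
the finiteness coming from $\int_{\R^d}\bracn^{-d-2\alpha}\dd n<\infty$, and this quantity is independent of $x$.

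Next, fix $x$ and expand $g^i_x$ in the orthonormal basis $\{\sigma_k\}$ of $U_0$ (a real Hilbert space):
\begin{equation*}
g^i_x=\sum_{k\ge1}\langle g^i_x,\sigma_k\rangle_{U_0}\,\sigma_k=\sum_{k\ge1}\ell^i_x(\sigma_k)\,\sigma_k=\sum_{k\ge1}\sigma^i_k(x)\,\sigma_k\qquad\text{in }U_0.
\end{equation*}
Taking the $j$-th component at $y$ — legitimate because $\ell^j_y$ is bounded on $U_0$ — yields $Q^{ij}(x-y)=\sum_k\sigma^i_k(x)\sigma^j_k(y)$, i.e. $Q(x-y)=\sum_k\sigma_k(x)\otimes\sigma_k(y)$. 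For absolute and locally uniform convergence, Cauchy–Schwarz gives $\sum_k|\sigma^i_k(x)\sigma^j_k(y)|\le(\sum_k|\sigma_k(x)|^2)^{1/2}(\sum_k|\sigma_k(y)|^2)^{1/2}$, where $\sum_k|\sigma_k(x)|^2=\sum_i\sum_k|\ell^i_x(\sigma_k)|^2=\sum_i\|g^i_x\|_{U_0}^2=\trace Q(0)$ for every $x$; since $x\mapsto\sum_{k\le N}|\sigma_k(x)|^2$ is a continuous function increasing to the constant $\trace Q(0)$, Dini's theorem makes the convergence uniform on compact sets, so the tails of $\sum_k\sigma_k(x)\otimes\sigma_k(y)$ vanish uniformly on compact sets in $(x,y)$.

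Finally, the bound $|Q(x-y)|\le|Q(0)|$ (in Hilbert–Schmidt norm) follows from Cauchy–Schwarz applied to the series:
\begin{equation*}
|Q(x-y)|_{\HS}^2=\sum_{k,l}(\sigma_k(x)\cdot\sigma_l(x))(\sigma_k(y)\cdot\sigma_l(y))\le\Big(\sum_{k,l}(\sigma_k(x)\cdot\sigma_l(x))^2\Big)^{1/2}\Big(\sum_{k,l}(\sigma_k(y)\cdot\sigma_l(y))^2\Big)^{1/2}=|Q(0)|_{\HS}^2,
\end{equation*}
because $\sum_{k,l}(\sigma_k(x)\cdot\sigma_l(x))^2=\sum_{i,j}Q^{ij}(0)^2$ for every $x$ (all double series converging since $\sum_k|\sigma_k(x)|^2=\trace Q(0)$); finiteness of $|Q(0)|$ is again $\int\bracn^{-d-2\alpha}<\infty$, and by isotropy $Q(0)$ is in fact a multiple of the identity. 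The main obstacle is the Fourier computation identifying the reproducing kernel of $H^{\frac d2+\alpha}_{\div}$ with $Q(x-y)$ on the nose, and — more conceptually — the passage from $L^2$-level identities to genuinely pointwise ones, which is exactly where the strict inequality $\frac d2+\alpha>\frac d2$ (i.e. $\alpha>0$) enters; the remainder is routine Hilbert-space bookkeeping together with Dini's theorem.
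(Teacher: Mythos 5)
Your proposal is correct. Note that the paper does not actually prove this lemma: it defers to the cited results \cite[Lemma 2.3]{GalLuo2023} and \cite[Lemma 2.5]{coghi2023existence}, whose underlying argument is precisely the reproducing-kernel one you give (the Cameron--Martin space of a homogeneous Gaussian field with covariance kernel $Q(x-y)$ is $\cQ^{1/2}(L^2)$, and the stated identity is the expansion of its reproducing kernel in an arbitrary orthonormal basis). All the individual steps check out: the Sobolev embedding $H^{d/2+\alpha}\hookrightarrow C_b$ (here $\alpha>0$ is exactly what is needed) makes the evaluations bounded with $x$-independent norm; the Fourier verification that the representer of $u\mapsto u^i(x)$ is the $i$-th column of $Q(\cdot-x)$ is routine, using $\cQ^{-1/2}\bigl(Q(\cdot-x)e_i\bigr)=\cQ^{1/2}(\delta_x e_i)\in L^2_{\div}$ and $\|g^i_x\|_{U_0}^2=Q^{ii}(0)$ with $\trace Q(0)<\infty$ since $\int\bracn^{-d-2\alpha}\dd n<\infty$; Parseval then gives $\sum_k|\sigma_k(x)|^2=\trace Q(0)$, so Cauchy--Schwarz plus Dini yields absolute and locally uniform convergence; and setting $y=x$ in the expansion makes your Hilbert--Schmidt Cauchy--Schwarz bound $|Q(x-y)|\le|Q(0)|$ exact. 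The only cosmetic remark is that you should state which matrix norm you use (your argument is written for the Frobenius norm; the bound for the operator norm follows equally easily from $Q(z)=(2\pi)^{-d/2}\int e^{in\cdot z}\widehat Q(n)\dd n$ and $Q(0)=c_0 I$), and that the identification of $g^i_x$ deserves the one-line mollification argument justifying the pairing with $\delta_x e_i$; neither affects the validity of the proof.
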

\begin{proof}
	See \cite[Lemma 2.3]{GalLuo2023} and \cite[Lemma 2.5]{coghi2023existence}.
\end{proof} 

The explicit representation of $Q$ can be given similarly to \eqref{eq:Q_decomposition_sss}, see e.g. \cite[Section 10]{LeJRai2002}. In particular, we will use that
\begin{align}
\begin{aligned}\label{eq:Q0}
Q(0) &= c_0I, \\
c_0 &= (2\pi)^{-d/2}\int_{\R^d} \langle k\rangle^{-d-2\alpha} |P^\perp_k v|^2 \dd k
\\
&= (2\pi)^{-d/2}\frac{d-1}{d} \int_0^\infty \rho^{d-1} \langle \rho \rangle^{-d-2\alpha} \dd \rho, 
\end{aligned}
\end{align}
where $I$ is the $d\times d$ identity matrix and $v$ is any vector of $\mathbb{S}^{d-1}$.

The reader can find further mathematical properties of the Kraichnan noise for instance in \cite{GalLuo2023, coghi2023existence} and references therein.

\subsection{The transport plus stretching operator}\label{subsec:transport_stretching}

In this subsection, we introduce the transport plus stretching operator in equation \eqref{eq:KK}, that is we give a rigorous meaning to the term
\begin{align}\label{eq:transport_stretching}
    (u^\cK\cdot\nabla) \mf -(\mf\cdot\nabla) u^\cK
\end{align}
when $u^\cK$ is the \emph{nonsmooth} Kraichnan vector field. We also give some bounds on this term that we will use in the paper.

To give a rigorous definition of \eqref{eq:transport_stretching}, we face two difficulties: the time irregularity of $u^\cK$, which is a white noise, and the space irregularity of both $u^\cK$ and $\mf$ (recall that the solution $\mf$ is only a distribution in space).

Concerning time irregularity, we can deal with it by classical stochastic integration. More precisely, assuming for a moment that $u^\cK$ and $\mf$ are smooth in space, we can understand the transport plus stretching term via Stratonovich integration, namely we define formally
\begin{align*}
    \int_0^t (u^\cK\cdot\nabla) \mf -(\mf\cdot\nabla) u^\cK \dd r &:= \int_0^t (\circ \mathrm{d}W^\cK_s \cdot \nabla \mf_s -  \mf_s \cdot \nabla \circ\mathrm{d}W^\cK_s ) \\
    &= \int_0^t B[\mf_s] \circ \dd W^\cK_s,
\end{align*}
where $\circ$ denotes Stratonovich integration and, for $v$, $w$ divergence-free vector fields,
\begin{equation}\label{eq:def_B}
	B[w]v := (v\cdot \nabla)w -(w\cdot \nabla)v = \div(v\otimes w-w\otimes v),
\end{equation}
so formally \eqref{eq:KK} reads
\begin{equation}\label{eq:KK_Strat}
    \dd M = -B[M] \circ \dd W^\cK.
\end{equation}
Still on a formal level, we can also transform the Stratonovich integral in equation \eqref{eq:KK_Strat} into an It\^o integral plus correction, see \cref{sec:Ito-Stratonovich_correction}, obtaining
\begin{equation}\label{eq:noise_term_ito+lap}
    -\int_0^t B[\mf_s] \circ \dd W^\cK_s = -\int_0^t B[\mf_s] \dd W^\cK_s +\frac{c_0}2 \int_0^t\Delta \mf_s \dd s,
	%-\frac{c_0}2 \int_0^t\Delta \mf_s \dd s + \int_0^t \div ( \mathrm{d}W^\cK_s \otimes \mf_s- \mf_s \otimes \mathrm{d}W^\cK_s),
\end{equation}
where $c_0>0$ is such that $Q(0)=c_0I$ (see \eqref{eq:Q0}); here $\frac{c_0}2 \Delta \mf$ represents the It\^o-Stratonovich correction.

The use of the Stratonovich integral is the correct choice here, as generally in transport-type PDEs, because it arises formally in the delta-correlated limit of smooth (in time and space) random velocity fields; see e.g. \cite[Remark 3]{FlaMauNek2014} for a more detailed explanation in this context.
However, as noted in \cite[Section 2.2]{GalLuo2023} for the transport equation, the rigorous definition of the above Stratonovich integral requires some degree of smoothness (in space) of $\mf$ or of $W^\cK$, which we do not have here. Therefore, as in \cite{GalLuo2023}, \cite{coghi2023existence} and \cite{galeati2024anomalous}, to overcome this issue, we use the formulation with the It\^o integral, namely the right-hand side of \eqref{eq:noise_term_ito+lap}, as a rigorous definition of the transport plus stretching term.

Concerning space irregularity of $u^\cK$ and $\mf$ in \eqref{eq:transport_stretching}, in view of the formulation  \eqref{eq:noise_term_ito+lap} with It\^o integral, we need to give sense to the It\^o integral
\begin{equation}\label{eq:noise_term_ito}
	\int_0^t B[\mf_s]\mathrm{d}W^\cK_s,
\end{equation}
where $W^\cK$ is a Wiener process with covariance function $Q$ as in \eqref{eq:Q_spectrum} and $\mf$ takes values in $H^{-s}_{\div}$ for suitable $s>0$. By classical stochastic integration (see \cite[Section 4.2.1]{DaPratoZabczyk2014}), we can define \eqref{eq:noise_term_ito} provided that
$B[\mf]\in L^2\left([0,T];\HS(U_0;H) \right)$ for $\P$-a.s.\ $\omega\in \Omega$, where $U_0=H^{d/2+\alpha}_{\div}$ is the reproducing-kernel Hilbert space of $W^\cK$ (by \cref{lem:rep_kernel}). This is what we will show in the rest of the subsection.

First note that $B[w]v$ can be defined as in \eqref{eq:def_B} for smooth and compactly supported functions $w$ and $v$. We start with extending rigorously the operator $B$ for distributional $w$.

\begin{lemma}\label{lem:products}
	Let $s\in (0,d/2 +\alpha]$, $w \in H^{-s}(\R^d)$ and $v \in H^{d/2+\alpha}(\R^d)$; then, $vw \in H^{-s}(\R^d)$ and 
	\begin{equation*}
		\|vw\|_{H^{-s}} \lesssim \|v \|_{H^{d/2+\alpha}} \| w\|_{H^{-s}}.
	\end{equation*}
\end{lemma}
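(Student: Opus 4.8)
The statement is a product estimate in Sobolev spaces: multiplication by a function $v \in H^{d/2+\alpha}$ maps $H^{-s}$ to itself (for $0 < s \le d/2+\alpha$), with the norm controlled by $\|v\|_{H^{d/2+\alpha}}\|w\|_{H^{-s}}$. The natural route is to pass to the Fourier side and estimate the convolution $\widehat{vw} = (2\pi)^{-d/2}\,\widehat v \ast \widehat w$ directly, using the paper's Fourier conventions. Concretely, I would write
\[
\langle \xi \rangle^{-s}\,\widehat{vw}(\xi) = (2\pi)^{-d/2}\int_{\R^d} \langle \xi\rangle^{-s}\,\widehat v(\xi-\eta)\,\widehat w(\eta)\,\dd\eta,
\]
and then compare $\langle \xi\rangle^{-s}$ with $\langle \eta\rangle^{-s}$ via the elementary Peetre-type inequality $\langle \xi\rangle^{-s} \le C_s\,\langle \xi-\eta\rangle^{|s|}\,\langle \eta\rangle^{-s}$, valid because $s > 0$ (here $\langle\xi-\eta\rangle^{|s|}=\langle\xi-\eta\rangle^{s}$). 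This reduces matters to showing that the integral operator with kernel $\langle\xi-\eta\rangle^{s}\,|\widehat v(\xi-\eta)|$ is bounded on $L^2(\R^d_\eta)$.

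\textbf{Key steps.} First, record the Peetre inequality and substitute, so that
\[
\big|\langle\xi\rangle^{-s}\widehat{vw}(\xi)\big| \lesssim \int_{\R^d} \langle\xi-\eta\rangle^{s}\,|\widehat v(\xi-\eta)|\;\langle\eta\rangle^{-s}|\widehat w(\eta)|\,\dd\eta = \big(g \ast h\big)(\xi),
\]
where $g(\zeta) := \langle\zeta\rangle^{s}|\widehat v(\zeta)|$ and $h(\eta) := \langle\eta\rangle^{-s}|\widehat w(\eta)|$; note $\|h\|_{L^2} = \|w\|_{H^{-s}}$. Second, apply Young's convolution inequality in the form $\|g\ast h\|_{L^2} \le \|g\|_{L^1}\|h\|_{L^2}$, which gives $\|vw\|_{H^{-s}} \lesssim \|g\|_{L^1}\,\|w\|_{H^{-s}}$. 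Third, bound $\|g\|_{L^1} = \int \langle\zeta\rangle^{s}|\widehat v(\zeta)|\,\dd\zeta$ by Cauchy–Schwarz against the weight $\langle\zeta\rangle^{-(d/2+\alpha)}\cdot\langle\zeta\rangle^{d/2+\alpha}$:
\[
\int_{\R^d} \langle\zeta\rangle^{s}|\widehat v(\zeta)|\,\dd\zeta \le \Big(\int_{\R^d}\langle\zeta\rangle^{2s - 2(d/2+\alpha)}\dd\zeta\Big)^{1/2}\Big(\int_{\R^d}\langle\zeta\rangle^{d+2\alpha}|\widehat v(\zeta)|^2\dd\zeta\Big)^{1/2}.
\]
The first factor is finite precisely when $2(d/2+\alpha) - 2s > d$, i.e.\ $s < \alpha + d/2 - d/2 = \dots$ — one must check this is equivalent to $s \le d/2+\alpha$; in fact the integral $\int\langle\zeta\rangle^{2s-d-2\alpha}\dd\zeta$ converges iff $d - 2s + 2\alpha > d$, i.e.\ $s < \alpha$, which is \emph{too restrictive}, so the endpoint weight must instead be split more carefully — see the remark below. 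The second factor is exactly $\|v\|_{H^{d/2+\alpha}}$.

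\textbf{Main obstacle and fix.} The delicate point is the convergence of $\int_{\R^d}\langle\zeta\rangle^{2s-d-2\alpha}\dd\zeta$, which requires $2s - d - 2\alpha < -d$, i.e.\ $s < \alpha$ — far short of the claimed range $s \le d/2+\alpha$. The resolution is to not throw all of $\langle\zeta\rangle^{-(d/2+\alpha)}$ onto the $L^2$ factor: instead split the $\zeta$-integral into $|\zeta|\le 1$ and $|\zeta|>1$, or more robustly, bound $\langle\xi\rangle^{-s}$ against $\langle\eta\rangle^{-s}$ only when $|\eta|\ge|\xi-\eta|$ and symmetrically otherwise, so that one only ever pays $\langle\xi-\eta\rangle^{s}$ with $s$ replaced by $\min(s, d/2+\alpha)$ effectively — the true mechanism is that $H^{d/2+\alpha}$ is an algebra-like multiplier space because $d/2+\alpha > d/2$, so $\widehat v \in L^1$ automatically (Cauchy–Schwarz with weight $\langle\zeta\rangle^{\pm(d/2+\alpha)}$ gives $\|\widehat v\|_{L^1}\lesssim\|v\|_{H^{d/2+\alpha}}$ since $\int\langle\zeta\rangle^{-d-2\alpha}\dd\zeta<\infty$), and then for the $\langle\zeta\rangle^s$ factor one uses the trivial bound $\langle\xi-\eta\rangle^{s}\le\langle\xi\rangle^{s}\langle\eta\rangle^{s}$ combined with $\langle\xi\rangle^{-s}$ to cancel, at the cost of needing $s \le d/2+\alpha$ so that $\langle\zeta\rangle^{s}\langle\zeta\rangle^{-(d/2+\alpha)}$ is bounded. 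I would carry out the clean version: use $\langle\xi\rangle^{-s} \lesssim \langle\xi-\eta\rangle^{s}\langle\eta\rangle^{-s}$, bound $\langle\xi-\eta\rangle^{s}|\widehat v(\xi-\eta)| \le \langle\xi-\eta\rangle^{d/2+\alpha}|\widehat v(\xi-\eta)| =: g(\xi-\eta)$ using $s\le d/2+\alpha$ and $\langle\cdot\rangle\ge 1$, then estimate $\|g\|_{L^1}\le\big(\int\langle\zeta\rangle^{-(d+2\alpha)}\dd\zeta\big)^{1/2}\|v\|_{H^{d/2+\alpha}}<\infty$ since $d+2\alpha>d$, and conclude via Young. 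This is the only genuinely load-bearing inequality; everything else is bookkeeping with the Fourier normalization constants.
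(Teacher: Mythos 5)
Your reduction via Peetre's inequality and your diagnosis that the naive Young argument only covers $s<\alpha$ are both correct, but the ``clean version'' you propose as the fix rests on a false inequality, so the proof does not go through in the claimed range $s\in(0,d/2+\alpha]$. The step
\[
\|g\|_{L^1}=\int_{\R^d}\langle\zeta\rangle^{d/2+\alpha}|\widehat v(\zeta)|\,\dd\zeta \le \Big(\int_{\R^d}\langle\zeta\rangle^{-(d+2\alpha)}\dd\zeta\Big)^{1/2}\|v\|_{H^{d/2+\alpha}}
\]
is not what Cauchy--Schwarz gives: splitting $|\widehat v|=\langle\zeta\rangle^{-(d/2+\alpha)}\cdot\langle\zeta\rangle^{d/2+\alpha}|\widehat v|$ yields only the unweighted bound $\|\widehat v\|_{L^1}\lesssim\|v\|_{H^{d/2+\alpha}}$; the function $g(\zeta)=\langle\zeta\rangle^{d/2+\alpha}|\widehat v(\zeta)|$ is in general merely in $L^2$ (its $L^2$ norm is exactly $\|v\|_{H^{d/2+\alpha}}$), not in $L^1$. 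Moreover, replacing $\langle\xi-\eta\rangle^{s}$ by the larger weight $\langle\xi-\eta\rangle^{d/2+\alpha}$ goes in the wrong direction: it aggravates exactly the divergence you had already identified. So after the (correct) Peetre reduction, Young's inequality $L^1\ast L^2\to L^2$ is not available, and the argument collapses back to the restricted range $s<\alpha$.

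What is missing is a treatment of the high-high frequency interaction, i.e.\ the region $|\xi-\eta|\approx|\eta|\gg|\xi|$, where high modes of $v$ and of $w$ combine into low output modes; no pointwise Peetre bound followed by a single global Young or Schur estimate can handle this regime up to $s=d/2+\alpha$. One needs to split into low-high, high-low and high-high interactions and sum over dyadic blocks, which is precisely the Bony decomposition $vw=T_vw+T_wv+R(v,w)$ used in the paper's proof: there $T_vw$ is controlled through $\|v\|_{L^\infty}\lesssim\|v\|_{H^{d/2+\alpha}}$, $T_wv$ through the paraproduct estimate with negative index, and the remainder $R(v,w)$ through the Besov remainder estimate, with a separate argument at the endpoint $s=d/2+\alpha$; the restriction $s\le d/2+\alpha$ enters only through this last term. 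Your vague alternative (``bound against $\langle\eta\rangle^{-s}$ only when $|\eta|\ge|\xi-\eta|$ and symmetrically otherwise'') gestures at this decomposition but is not carried out, and when one does carry it out the high-high piece still cannot be closed by an $L^1$ kernel bound; it requires exploiting square-summability over dyadic annuli (or citing a standard Sobolev multiplication theorem). As written, the proposal therefore has a genuine gap at its only load-bearing step.
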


\begin{proof}
	We resort to paradifferential calculus. In particular, following the notation set in \cite[Section 2.8]{Bahouri2011}, we have the Bony decomposition
	\begin{equation*}
		vw = T_v w + T_w v  + R(v,w).
	\end{equation*}
	Thanks to \cite[Theorem 2.82]{Bahouri2011}, \cite[Theorem 1.66]{Bahouri2011} and \cite[Theorem 2.71]{Bahouri2011}, we have
	\begin{align} \label{eq:PT1}
		&\| T_v w \|_{B^{-s}_{2,2}} 
		\lesssim \|v\|_{L^\infty} \| w \|_{B^{-s}_{2,2}}
		\lesssim \|v\|_{H^{d/2+\alpha}} \| w \|_{B^{-s}_{2,2}},\\
		&\|  T_w v \|_{B^{\alpha-s}_{2,2}} 
		\lesssim \|  T_w v \|_{B^{\alpha-s}_{2,1}}
		\lesssim \| w \|_{B^{-s-d/2}_{\infty,2}} \| v \|_{B^{d/2+\alpha}_{2,2}} 
		\lesssim \| w \|_{B^{-s}_{2,2}} \| v \|_{B^{d/2 +\alpha}_{2,2}}. \label{eq:PT2}
	\end{align}
	In addition, for $s\in(0,d/2+\alpha)$, \cite[Theorem 2.85]{Bahouri2011} implies
	\begin{equation} \label{eq:PRa}
		\| R(v,w) \|_{B^{\alpha-s}_{2,2}} 
		\lesssim \| R(v,w) \|_{B^{d/2+ \alpha-s}_{1,1}} 
		\lesssim \| w \|_{B^{-s}_{2,2}} \| v \|_{B^{d/2+\alpha}_{2,2}},
	\end{equation}
	while, for $s=d/2+\alpha$, using \cite[Theorem 2.79]{Bahouri2011} and \cite[Theorem 2.85]{Bahouri2011}, we have
	\begin{equation} \label{eq:PRb}
		\| R(v,w) \|_{B^{-d/2}_{2,2}}
		\lesssim \| R(v,w) \|_{B^{-d/2}_{2,\infty}}
		\lesssim \| R(v,w) \|_{B^{0}_{1,\infty}}
		\lesssim \| w \|_{B^{-d/2-\alpha}_{2,2}} \| v \|_{B^{d/2+\alpha}_{2,2}}.
	\end{equation}
	Recalling that for every $s \in \R$ we have the equality $B^s_{2,2}=H^s$ (see \cite[Examples p.99]{Bahouri2011}), we can combine \eqref{eq:PT1}, \eqref{eq:PT2} and \eqref{eq:PRa} (or \eqref{eq:PRb}) to get
	\begin{align*}
		\| v w\|_{H^{-s}}
		&\lesssim \| T_v w \|_{H^{-s}} + \| T_w v\|_{H^{\alpha-s}}  + \| R(v,w)\|_{H^{\alpha-s}} \\
		&\lesssim  \| v \|_{H^{d/2+\alpha}} \|  w\|_{H^{-s}},
	\end{align*}
	which concludes the proof.
\end{proof}

\begin{corollary} \label{lem:B_is_well_defined}
	Let $s \in  (0,d/2 +\alpha]$. Then $B[\cdot]$ is in $L(H^{-s}_{\div}, L(H^{d/2+\alpha}_{\div};H^{-s-1}_{\div}))$.
\end{corollary}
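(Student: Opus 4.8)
The goal is to prove \cref{lem:B_is_well_defined}, which upgrades \cref{lem:products} from a product estimate to a statement about the bilinear operator $B[w]v = \div(v\otimes w - w\otimes v)$.

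My plan is to reduce everything directly to \cref{lem:products} together with the mapping properties of the divergence (one derivative). First I would recall that for smooth compactly supported divergence-free $v,w$, the definition $B[w]v = \div(v\otimes w - w\otimes v)$ makes sense classically, and that each component of $v\otimes w - w\otimes v$ is a linear combination of products $v^i w^j$ (and $v^j w^i$). By \cref{lem:products} with the roles $w\in H^{-s}$, $v\in H^{d/2+\alpha}$, each such product lies in $H^{-s}$ with norm controlled by $\|v\|_{H^{d/2+\alpha}}\|w\|_{H^{-s}}$; hence the matrix field $v\otimes w - w\otimes v \in H^{-s}$ with the same bound. Then I would use that $\div$ maps $H^{-s}\to H^{-s-1}$ continuously (each entry of $\div$ is $\partial_k$, which drops one Sobolev index), giving $B[w]v\in H^{-s-1}$ with $\|B[w]v\|_{H^{-s-1}}\lesssim \|v\|_{H^{d/2+\alpha}}\|w\|_{H^{-s}}$. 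This shows the bilinear map $(w,v)\mapsto B[w]v$, initially defined on smooth compactly supported inputs, is bounded for the $(H^{-s},H^{d/2+\alpha})\to H^{-s-1}$ norms, so it extends uniquely by density to all of $H^{-s}\times H^{d/2+\alpha}$; the extension is what we call $B[\cdot]$.

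Next I would check the divergence-free subscripts. If $v$ and $w$ are divergence-free, then $v\otimes w - w\otimes v$ is an antisymmetric matrix field, and for any antisymmetric matrix field $A$ one has $\div(\div A) = \sum_{i,j}\partial_i\partial_j A^{ij} = 0$ by antisymmetry; hence $B[w]v = \div(v\otimes w - w\otimes v)$ is divergence-free (this identity holds first for smooth inputs, then passes to the limit since $\div\colon H^{-s-1}\to H^{-s-2}$ is continuous). Therefore $B[\cdot]$ restricts to a map $H^{-s}_{\div}\to L(H^{d/2+\alpha}_{\div};H^{-s-1}_{\div})$, and linearity in $w$ is immediate from the bilinear structure. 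This establishes $B[\cdot]\in L(H^{-s}_{\div},L(H^{d/2+\alpha}_{\div};H^{-s-1}_{\div}))$ with operator norm $\lesssim 1$, as claimed.

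There is no serious obstacle here; the statement is essentially a repackaging of \cref{lem:products}. The one point requiring a little care is the density argument: one must ensure the two-step definition (product via Bony decomposition, then divergence) is consistent with the classical definition on smooth compactly supported divergence-free fields and is independent of the approximating sequences, which follows from the uniform bound and boundedness of $\div$; and one should note that smooth compactly supported divergence-free fields are dense in $H^{d/2+\alpha}_{\div}$ (e.g.\ by mollification composed with the Leray projection, or directly in Fourier variables), so that the bilinear extension indeed covers the full space $H^{d/2+\alpha}_{\div}$. I would state this density once and invoke it without belaboring it.
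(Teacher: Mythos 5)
Your argument is correct and takes essentially the same route as the paper: boundedness into $H^{-s-1}$ follows from \cref{lem:products} applied to the entries of $v\otimes w-w\otimes v$ followed by the divergence, and the divergence-free property of the output is checked on smooth fields and extended by continuity. Your antisymmetry observation ($\div\div A=0$ for antisymmetric $A$) is just a repackaging of the paper's index identity $\sum_{i,j}\partial_i v^j\partial_j w^i-\partial_i w^j\partial_j v^i=0$, and the extra care you take with the density/consistency of the extension is fine, though the paper leaves that step implicit.
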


\begin{proof}
    \cref{lem:products} implies that $B[\cdot]$ is in $L(H^{-s}_{\div}, L(H^{d/2+\alpha}_{\div};H^{-s-1}))$. Moreover, as well-known, for divergence-free, smooth vector fields $v$ and $w$, we have
    \begin{equation*}
        \div[B[w]v] = \sum_{i,j=1}^d \partial_iv^j\partial_jw^i -\partial_iw^j\partial_jv^i =0,
    \end{equation*}
    so that $B[\cdot]$ is in $L(H^{-s}_{\div}, L(H^{d/2+\alpha}_{\div};H^{-s-1}_{\div}))$.
\end{proof}

The following auxiliary lemma gives a representation formula which will be used to show Hilbert-Schmidt bounds on $B$.

\begin{lemma} \label{lem:smooth_Ito_term}
	Let $\mf\in \cS_{\div}$, let $G \in \cS$ be a (smooth) even convolution kernel and let $\{\sigma_k\}_{k\ge1} \subset \cS_{\div}$ be an orthonormal basis of $H^{d/2+\alpha}_{\div}$. Then
	\begin{multline*}
	    \sum_{k\ge 1} \langle G\ast B[\mf]\sigma_k,B[\mf]\sigma_k\rangle \\
			= 
			- \langle \trace(Q D^2G) \ast \mf, \mf\rangle
			- \langle \big((\trace Q) D^2 G  \big)\ast \mf ,\mf \rangle
			+2 \langle (Q D^2G)\ast \mf, \mf \rangle .
	\end{multline*}
\end{lemma}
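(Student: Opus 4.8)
The plan is to expand the quadratic form $\sum_{k\ge1}\langle G\ast B[\mf]\sigma_k,B[\mf]\sigma_k\rangle$, integrate by parts so that no derivative remains on the $\sigma_k$, and only then interchange the sum over $k$ with the spatial integrations. First, since $\mf$ and each $\sigma_k$ are smooth and divergence free, \eqref{eq:def_B} lets me write $B[\mf]\sigma_k=\div A_k$ with the Schwartz, antisymmetric matrix field $A_k:=\sigma_k\otimes\mf-\mf\otimes\sigma_k$, so that $A_k^{ij}=\sigma_k^i\mf^j-\mf^i\sigma_k^j$ and $(B[\mf]\sigma_k)^j=\partial_i A_k^{ij}$ (summation over repeated indices). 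Since $A_k\in\cS$, two integrations by parts (no boundary terms) turn each summand into
\begin{equation*}
    \langle G\ast B[\mf]\sigma_k,B[\mf]\sigma_k\rangle=-\int_{\R^d}\!\int_{\R^d}(\partial_i\partial_l G)(x-y)\,A_k^{ij}(x)\,A_k^{lj}(y)\dd x\dd y.
\end{equation*}

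The main obstacle is justifying $\sum_{k\ge1}\int\!\int=\int\!\int\sum_{k\ge1}$. It is essential to integrate by parts \emph{first}: the naive expansion of $B[\mf]\sigma_k$ contains $\nabla\sigma_k$, and $\sum_{k\ge1}|\nabla\sigma_k(x)|^2=+\infty$ — this divergence is exactly the mechanism behind the $L^2$ blow-up of \cref{sec:L2_blow_up} — so the series cannot be dominated termwise before the reduction above. Once all derivatives sit on $G$, I would invoke \cref{lem:sigma_k}: it gives $\sum_{k\ge1}|\sigma_k(x)|^2=\trace Q(0)$ for every $x$, hence $\sum_{k\ge1}|\sigma_k(x)||\sigma_k(y)|\le\trace Q(0)$ by Cauchy--Schwarz, whence the pointwise bound $\sum_{k\ge1}|A_k^{ij}(x)||A_k^{lj}(y)|\lesssim\trace Q(0)\,|\mf(x)||\mf(y)|$. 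As $\partial_i\partial_l G\in L^1$ and $\mf\in L^2$, the majorant $\int\!\int|(\partial_i\partial_l G)(x-y)|\,|\mf(x)||\mf(y)|\dd x\dd y$ is finite, so Fubini legitimises the exchange; using \cref{lem:sigma_k} once more in the form $\sum_{k\ge1}\sigma_k^p(x)\sigma_k^q(y)=Q^{pq}(x-y)$ I arrive at
\begin{equation*}
    \sum_{k\ge1}\langle G\ast B[\mf]\sigma_k,B[\mf]\sigma_k\rangle=-\int_{\R^d}\!\int_{\R^d}(\partial_i\partial_l G)(x-y)\Big(\sum_{k\ge1}A_k^{ij}(x)A_k^{lj}(y)\Big)\dd x\dd y.
\end{equation*}

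To conclude I would expand the product $A_k^{ij}(x)A_k^{lj}(y)$, sum over $k$ and $j$, and substitute $\sum_{k\ge1}\sigma_k\otimes\sigma_k=Q$: this yields four scalar integrands, proportional respectively to $\trace Q(x-y)\,(\mf(x)\cdot\mf(y))$, $(Q(x-y)\mf(x))^i\mf^l(y)$, $\mf^i(x)(Q(x-y)\mf(y))^l$ and $Q^{il}(x-y)(\mf(x)\cdot\mf(y))$. Inserting them back and using the contractions $\sum_{i,l}Q^{il}\partial_i\partial_l G=\trace(QD^2G)$ and $\sum_i(\partial_i\partial_l G)Q^{ip}=(D^2G\,Q)^{lp}$, the evenness of $G$ and $Q$, and the elementary fact that $\langle K\ast\mf,\mf\rangle=\langle K^\top\ast\mf,\mf\rangle$ for an even matrix kernel $K$ (with $(D^2G\,Q)^\top=QD^2G$), the four contributions become $-\langle\trace(QD^2G)\ast\mf,\mf\rangle$, $\langle(QD^2G)\ast\mf,\mf\rangle$, $\langle(QD^2G)\ast\mf,\mf\rangle$ and $-\langle((\trace Q)D^2G)\ast\mf,\mf\rangle$, whose sum is precisely the asserted identity. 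Carrying out the same computation on the three pieces of the expanded quadratic form separately also produces the identities \eqref{eq:transport_term_sss}, \eqref{eq:stretching_term_sss}, \eqref{eq:mixed_term_sss} used in the heuristic section.
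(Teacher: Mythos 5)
Your proposal is correct and takes essentially the same route as the paper's proof: both arguments move every derivative onto $G$ by integration by parts (using the divergence-free conditions) and then replace $\sum_{k}\sigma_k\otimes\sigma_k$ by $Q$ via \cref{lem:sigma_k}, with the cross terms combined through the evenness of $G$ and $Q$. The only differences are cosmetic — you package the integrand via $B[\mf]\sigma_k=\div(\sigma_k\otimes\mf-\mf\otimes\sigma_k)$ and expand after summing in $k$, and you spell out the Fubini/domination step that the paper leaves implicit (note the paper's block-by-block expansion is also legitimate, since within each block the derivatives can likewise be transferred to $G$ before summing).
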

\begin{proof}
	Since $G$ is even,
	\begin{align}\label{eq:2nd_order_correction}
		\begin{split}
			\sum_{k\ge 1} &\langle G\ast (\sigma_k\cdot\nabla \mf-\mf\cdot\nabla\sigma_k),(\sigma_k\cdot\nabla \mf-\mf\cdot\nabla\sigma_k)\rangle \\
			&= \sum_{k\ge 1} \langle G\ast (\sigma_k\cdot\nabla \mf),\sigma_k\cdot\nabla \mf\rangle - 2 \sum_{k\ge 1} \langle G\ast (\sigma_k\cdot\nabla \mf),\mf\cdot\nabla\sigma_k\rangle \\
			&\quad+\sum_{k\ge 1} \langle G\ast (\mf\cdot\nabla\sigma_k),\mf\cdot\nabla\sigma_k\rangle.
		\end{split}
	\end{align}
	We study each term of the right-hand-side of \eqref{eq:2nd_order_correction}, starting from the last one coming from the stretching part of the noise. Leveraging the divergence-free condition for $\mf$ and recalling \cref{lem:sigma_k}, we have
	\begin{align}
        \begin{aligned}\label{eq:stretching_term}
		\sum_{k\ge 1}& \langle G\ast (\mf\cdot\nabla\sigma_k),\mf\cdot\nabla\sigma_k\rangle  \\
		%&= \sum_{k\ge 1}  \langle G\ast \div( \mf\otimes\sigma_k ), \div( \mf\otimes\sigma_k ) \rangle \\
		%&= - \sum_{k\ge 1} \langle \partial_{l_1}\partial_{l_2} G \ast (\mf^{l_1} \sigma_k^i), \mf^{l_2} \sigma_k^i \rangle\\
		&= - \sum_{k\ge 1} \iint_{\R^d \times \R^d} \partial_{l_1}\partial_{l_2} G (x-y) \mf^{l_1} (y) \sigma_k^i (y) \mf^{l_2} (x) \sigma_k^i (x) \dd x \dd y \\
		&= -  \iint_{\R^d \times \R^d} \partial_{l_1}\partial_{l_2} G (x-y) \trace Q(x-y) \mf^{l_1} (y)  \mf^{l_2} (x)  \dd x \dd y \\
		&= \langle G\ast (\mf\cdot\nabla\sigma_k),\mf\cdot\nabla\sigma_k\rangle 
		= - \langle \big((\trace Q) D^2 G  \big)\ast \mf ,\mf \rangle.
        \end{aligned}
	\end{align}
	Then we consider the term coming from the transport part of the noise. We have similarly
	\begin{align}
		\sum_{k\ge 1} &\langle G\ast (\sigma_k\cdot\nabla \mf),\sigma_k\cdot\nabla \mf\rangle \nonumber \\
		&= - \sum_{k\ge 1} \iint_{\R^d \times \R^d} \partial_{l_1}\partial_{l_2} G (x-y) \sigma_k^{l_1} (y)\mf^{i} (y)  \sigma_k^{l_2} (x) \mf^{i} (x)  \dd x \dd y \nonumber \\
		&= - \iint_{\R^d \times \R^d} \partial_{l_1}\partial_{l_2} G (x-y) Q^{l_2 l_1}(x-y) \mf^{i} (y)  \mf^{i} (x)  \dd x \dd y \nonumber \\
		&= - \langle \trace(Q D^2G) \ast \mf, \mf\rangle.\label{eq:transport_term}
	\end{align} 
	Lastly we analyze the term coming from the interaction of the transport and stretching parts. By the divergence-free assumption of both $\mf$ and $\sigma_k$ and the fact that $G$ and $Q$ are even, we obtain
	\begin{align}
		\sum_{k\ge 1}& \langle G\ast (\sigma_k\cdot\nabla \mf),\mf\cdot\nabla\sigma_k\rangle \nonumber \\
		&= - \sum_{k\ge 1} \iint_{\R^d \times \R^d} \partial_{l_1}\partial_{l_2} G (x-y) \sigma_k^{l_1} (y)\mf^{i} (y)   \mf^{l_2} (x) \sigma_k^{i} (x)  \dd x \dd y \nonumber \\
		&= -  \iint_{\R^d \times \R^d} \partial_{l_1}\partial_{l_2} G (x-y) Q^{i l_1}(x-y)\mf^{i} (y)   \mf^{l_2} (x)  \dd x \dd y \nonumber \\
		%&= -  \iint_{\R^d \times \R^d} Q^{i l_1}(x-y) \partial_{l_1}\partial_{l_2} G (x-y) \mf^{l_2} (y) \mf^{i} (x)  \dd x \dd y \\
		&= - \langle (Q D^2G)\ast \mf, \mf \rangle.\label{eq:mixed_term}
	\end{align}
	Combining \eqref{eq:stretching_term}, \eqref{eq:transport_term} and \eqref{eq:mixed_term}, the statement is proved.
\end{proof}

The following lemma shows a useful expression for the Hilbert-Schmidt norm of $B$ with respect to a suitable \textit{smooth} kernel $G$. It is convenient to write this Hilbert-Schmidt norm in Fourier modes: for $\mf \in \mathcal{S}_{\div}$,
\begin{align*}
    (2\pi)^{-d/2}\sum_k \langle  G \ast B[\mf]\sigma_k , B[\mf]\sigma_k \rangle
    &= \sum_k \int \widehat G(n) |\mathcal{F} B[\mf]\sigma_k|^2(n) \dd n \\
    &= \sum_k \| \cF B[\mf]\sigma_k \|_{L^2_\gamma}^2 \\
    &= \| \cF B [\mf] \|_{\HS(H^{d/2+\alpha}_{\div}; L^2_\gamma)}^2,
\end{align*}
for $\widehat{G}\in \mathcal{S}$ with $\widehat{G}\ge 0$, where $\gamma(\mathrm{d}n) \coloneqq \widehat G (n) \mathrm{d}n$.

\begin{lemma} \label{lem:FB_HS_norm_in_L2}
	Let $\mf \in H^{-d/2-\alpha}_{\div}$, $\widehat G \in \cS$, $\widehat G \ge 0$ and $\gamma(\mathrm{d}n) \coloneqq \widehat G (n) \mathrm{d}n$. Then,
	\begin{equation}\label{eq:FB_HS_norm_in_L2}
		\| \cF B [\mf] \|_{\HS(H^{d/2+\alpha}_{\div}; L^2_\gamma)}^2 
		= (2\pi)^{-d/2}\int_{\R^d}  \mathbb F_{\gamma} (n) \widehat{\mf} (n) \cdot \overline{\widehat{\mf}} (n) \dd n
		%		= \int_{\R^d}  \widetilde {\mathbb F}_{\gamma} (n) \widehat{\mf} (n) \cdot \overline{\widehat{\mf}} (n) \dd n,
	\end{equation}
	where
	\begin{equation}\label{eq:F_gamma}
		\mathbb F_\gamma (n) \coloneqq \int_{\R^d} \langle n-k \rangle^{-d-2\alpha} \widehat{G}(k) \left(|P^\perp_{n-k}k|^2 I +(d-1)\, k k^\top -2P^\perp_{n-k}\, k k^\top \right) \dd k.
	\end{equation}
\end{lemma}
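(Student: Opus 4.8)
\emph{Strategy.} I would prove \eqref{eq:FB_HS_norm_in_L2} first for $\mf\in\cS_{\div}$, where \cref{lem:smooth_Ito_term} reduces it to taking Fourier transforms of three explicit convolution kernels, and then extend it to $\mf\in H^{-d/2-\alpha}_{\div}$ by density. Fix $\mf\in\cS_{\div}$ and, as in \cref{lem:smooth_Ito_term}, an orthonormal basis $\{\sigma_k\}\subset\cS_{\div}$ of $H^{d/2+\alpha}_{\div}$. The computation displayed just before the statement already gives $\|\cF B[\mf]\|^2_{\HS(H^{d/2+\alpha}_{\div};L^2_\gamma)}=(2\pi)^{-d/2}\sum_k\langle G\ast B[\mf]\sigma_k,B[\mf]\sigma_k\rangle$, with $G$ Schwartz since $\widehat G$ is. To apply \cref{lem:smooth_Ito_term} I would first reduce to $\widehat G$ even: replacing $\widehat G(n)$ by $\tfrac12(\widehat G(n)+\widehat G(-n))$ keeps it Schwartz and nonnegative, leaves the left-hand side of \eqref{eq:FB_HS_norm_in_L2} unchanged (because $|\cF B[\mf]\sigma_k(n)|^2$ is even in $n$, the field $B[\mf]\sigma_k$ being real), and leaves the right-hand side unchanged as well, via the change of variables $(n,k)\mapsto(-n,-k)$ under which $\langle n-k\rangle$ and the matrix appearing in \eqref{eq:F_gamma} are invariant while $\widehat{\mf}(-n)=\overline{\widehat{\mf}(n)}$. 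With $G$ now even and Schwartz, \cref{lem:smooth_Ito_term} gives $\|\cF B[\mf]\|^2_{\HS}=(2\pi)^{-d/2}\big(-\langle\trace(QD^2G)\ast\mf,\mf\rangle-\langle(\trace Q)D^2G\ast\mf,\mf\rangle+2\langle(QD^2G)\ast\mf,\mf\rangle\big)$.

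Next I would Fourier-transform the three terms. Using $\widehat Q(k)=\langle k\rangle^{-d-2\alpha}P^\perp_k$, $\widehat{\partial_a\partial_bG}(\zeta)=-\zeta_a\zeta_b\widehat G(\zeta)$ and $\widehat{f\ast g}=(2\pi)^{d/2}\widehat f\,\widehat g$, one writes each of the scalar- or matrix-valued kernels $\trace(QD^2G)$, $(\trace Q)D^2G$, $QD^2G$ as a convolution of $\widehat Q$ with $\widehat{D^2G}$, inserts it via Parseval, and substitutes $k\mapsto n-k$ in the inner convolution integral. A direct inspection then shows that the transport term $-\langle\trace(QD^2G)\ast\mf,\mf\rangle$ produces the $|P^\perp_{n-k}k|^2\,I$ contribution to $\mathbb F_\gamma$, that $-\langle(\trace Q)D^2G\ast\mf,\mf\rangle$ produces the $(d-1)kk^\top$ contribution (using $\trace P^\perp_k=d-1$), and that $2\langle(QD^2G)\ast\mf,\mf\rangle$ produces the $-2P^\perp_{n-k}kk^\top$ contribution; summing the three gives $(2\pi)^{-d/2}\int_{\R^d}\mathbb F_\gamma(n)\widehat{\mf}(n)\cdot\overline{\widehat{\mf}}(n)\dd n$, which is \eqref{eq:FB_HS_norm_in_L2} for $\mf\in\cS_{\div}$.

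For the density step, observe that the matrix in the integrand of \eqref{eq:F_gamma} is $O(|k|^2)$ uniformly in $n$, and Peetre's inequality gives $\langle n-k\rangle^{-d-2\alpha}\lesssim\langle n\rangle^{-d-2\alpha}\langle k\rangle^{d+2\alpha}$, so $\|\mathbb F_\gamma(n)\|\lesssim\langle n\rangle^{-d-2\alpha}\int\langle k\rangle^{d+2\alpha}|k|^2\widehat G(k)\dd k\lesssim\langle n\rangle^{-d-2\alpha}$ since $\widehat G\in\cS$. Hence the right-hand side of \eqref{eq:FB_HS_norm_in_L2} is a continuous quadratic form on $H^{-d/2-\alpha}_{\div}$, and by the Schwartz case the map $\mf\mapsto\cF B[\mf]$ is bounded from the dense subspace $\cS_{\div}$ into $\HS(H^{d/2+\alpha}_{\div};L^2_\gamma)$, hence extends to a bounded map on all of $H^{-d/2-\alpha}_{\div}$. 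Using the boundedness of $\mf\mapsto B[\mf]\sigma_k$ from \cref{lem:B_is_well_defined} and passing to a subsequence to upgrade the resulting $L^2_\gamma$-convergence of $\cF B[\mf_j]\sigma_k$ to almost everywhere convergence, one checks that this extension coincides with $\cF B[\mf]$ for every $\mf\in H^{-d/2-\alpha}_{\div}$; then \eqref{eq:FB_HS_norm_in_L2} follows by passing to the limit along Schwartz approximations of $\mf$.

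\emph{Main obstacle.} No step is deep: the bulk of the work is the bookkeeping in the Fourier-mode computation — keeping the matrix products $QD^2G$ straight and recognizing that the correct last term is the non-symmetric $-2P^\perp_{n-k}kk^\top$, which becomes symmetric only after contracting with $\widehat{\mf}(n)\cdot\overline{\widehat{\mf}}(n)$ and integrating — together with the verification in the density step that the a priori densely defined extension genuinely agrees with $\cF B[\mf]$ throughout $H^{-d/2-\alpha}_{\div}$.
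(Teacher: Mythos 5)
Your proposal is correct and takes essentially the same route as the paper: prove the identity for $\mf\in\cS_{\div}$ by combining \cref{lem:smooth_Ito_term} with a Fourier-side computation identifying $\mathbb F_\gamma$, establish the uniform bound $|\mathbb F_\gamma(n)|\lesssim\langle n\rangle^{-d-2\alpha}$, and conclude for $\mf\in H^{-d/2-\alpha}_{\div}$ by density using \cref{lem:B_is_well_defined}. The only cosmetic differences are that you derive the bound on $\mathbb F_\gamma$ via Peetre's inequality where the paper splits the $k$-integral at $|k|=|n|/2$, and you insert a symmetrization of $\widehat G$ before invoking \cref{lem:smooth_Ito_term} (a point the paper passes over silently; note the invariance of the right-hand side under $(n,k)\mapsto(-n,-k)$ also uses $n\cdot\widehat{\mf}(n)=0$ to handle the non-symmetric term $P^\perp_{n-k}kk^\top$).
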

\begin{proof} 
	First we consider $\mf \in \mathcal S_{\div}$.
	Without loss of generality, we can choose an orthonormal basis $\{\sigma_k\}_{k\ge 1 }$ of $H^{\frac d2 + \alpha}_{\div}$ made of elements of $\cS_{\div}$.
	By \cref{lem:smooth_Ito_term}, we obtain
	\begin{align*}
		&(2\pi)^{d/2}\| \cF B [\mf] \|_{\HS(H^{d/2+\alpha}_{\div}; L^2_\gamma)}^2 = \sum_{k\ge 1} \langle  G \ast B[\mf]\sigma_k, B[\mf]\sigma_k\rangle \\
		&= 
		- \langle \trace(Q D^2 G) \ast \mf, \mf\rangle
		- \langle \big((\trace Q) D^2 G  \big)\ast \mf ,\mf \rangle
		+2 \langle (Q D^2 G)\ast \mf, \mf \rangle\\
		&= \int_{\R^d}  \mathbb F_\gamma (n) \widehat{\mf} (n) \cdot \overline{\widehat{\mf}} (n) \dd n,
	\end{align*}
	proving \eqref{eq:F_gamma} for $\mf \in \cS_{\div}$. 
    In view of the general case $\mf \in H^{-d/2-\alpha}_{\div}$, we estimate $\mathbb F_\gamma (n) $, using that $G\in \mathcal{S}$:
	\begin{align} \label{eq:bound_HS_L2}
		\begin{split}
			\left|\mathbb F_\gamma (n)\right|
			& \lesssim \int_{\R^d} \langle n-k \rangle^{-d-2\alpha} \widehat G(k)|k|^{2}  \dd k \\
			& \lesssim \langle n \rangle^{-d-2\alpha} \int_{|k|\le |n|/2} \widehat G(k)|k|^{2}  \dd k \\
                & \quad + \langle n \rangle^{-d-2\alpha} \sup_{|k|>|n|/2}(\langle k \rangle^{d+2+2\alpha}\hat{G}(k)) \int_{|k|>|n|/2} \langle n-k \rangle^{-d-2\alpha}  \dd k  \\
			& \lesssim \langle n \rangle^{-d-2\alpha}.
		\end{split}
	\end{align}
    Hence we have for $\mf \in \cS_{\div}$
	\begin{equation*}
		\| \cF B [\mf] \|_{\HS(H^{d/2+\alpha}_{\div}; L^2_\gamma)}  \lesssim \| \mf \|_{H^{-d/2-\alpha}}.
	\end{equation*} 
    Therefore, for $\mf \in H^{-d/2-\alpha}_{\div}$ (note that $\cF B[\mf]$ is well-defined thanks to \cref{lem:B_is_well_defined}), the representation formula \eqref{eq:F_gamma} follows by a density argument, using \eqref{eq:bound_HS_L2}.
\end{proof}

In the next lemma we bound the Hilbert-Schmidt norm of $B[\mf]$ as operator with values in $\dot{H}^{-s}$.

\begin{lemma}\label{lem:HS_homogeneous}
	Let $s\in (1,d/2+1)$ and $\mf\in H^{-s}_{\div}$. Then, we have
	\begin{equation*}
		\| B[\mf] \|^2_{\HS(H^{d/2 + \alpha}_{\div}; \dot H^{-s})} = (2\pi)^{-d/2} \int_{\R^d}  \mathbb F_{d,s,\alpha} (n) \widehat{\mf} (n) \cdot \overline{\widehat{\mf}} (n) \dd n,
	\end{equation*}
        where
        \begin{equation}\label{eq:F_def}
            \mathbb F_{d,s,\alpha} (n) \coloneqq \int_{\R^d} \langle n-k \rangle^{-d-2\alpha} |k|^{-2s} \left(|P^\perp_{n-k}k|^2 I +(d-1) kk^\top -2P^\perp_{n-k} kk^\top \right) \dd k.
        \end{equation}
	In particular,
	\begin{equation*}
		\| B[\mf] \|_{\operatorname{HS}(H^{d/2 + \alpha}_{\div}; \dot H^{-s})} \lesssim  \|\mf\|_{ H^{-s+1}}.
	\end{equation*}
\end{lemma}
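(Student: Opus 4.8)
\emph{Strategy.} \cref{lem:HS_homogeneous} is the exact analogue of \cref{lem:FB_HS_norm_in_L2} in which the \emph{smooth} weight $\widehat G\in\cS$ is replaced by the (non-smooth, non-integrable) Fourier multiplier of $(-\Delta)^{-s}$, namely $|n|^{-2s}$, so that $L^2_\gamma$ is replaced by $\dot H^{-s}$. Indeed $\|f\|_{\dot H^{-s}}^2=\int_{\R^d}|n|^{-2s}|\widehat f(n)|^2\dd n$, hence for any orthonormal basis $\{\sigma_l\}_{l\ge1}$ of $H^{d/2+\alpha}_{\div}$ one has $\|B[\mf]\|^2_{\HS(H^{d/2+\alpha}_{\div};\dot H^{-s})}=\sum_l\int_{\R^d}|n|^{-2s}|\cF(B[\mf]\sigma_l)(n)|^2\dd n$, and the only issue is to push the formula of \cref{lem:FB_HS_norm_in_L2} through a limit in the weight. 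So the plan is: prove the identity for $\mf\in\cS_{\div}$ by approximating $|n|^{-2s}$ by admissible Schwartz weights; establish the pointwise bound $|\mathbb F_{d,s,\alpha}(n)|\lesssim\langle n\rangle^{2-2s}$, which both feeds the limit and yields the ``In particular'' claim; and finally extend to $\mf\in H^{-s}_{\div}$ by density.

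\emph{The case $\mf\in\cS_{\div}$.} Fix $\mf\in\cS_{\div}$ and, by \cref{lem:noise_series}, an orthonormal basis $\{\sigma_l\}$ of $H^{d/2+\alpha}_{\div}$ with $\sigma_l\in\cS_{\div}$. For $\eps>0$ set $\widehat{G^\eps}(n):=|n|^{-2s}e^{-\eps(|n|^2+|n|^{-2})}$: these functions are nonnegative, even, belong to $\cS$ (the factor $e^{-\eps/|n|^2}$ kills the singularity of $|n|^{-2s}$ at the origin to infinite order, and $e^{-\eps|n|^2}$ gives rapid decay), and increase pointwise to $|n|^{-2s}$ as $\eps\downarrow0$. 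Applying \cref{lem:FB_HS_norm_in_L2} with $\gamma^\eps(\dd n)=\widehat{G^\eps}(n)\dd n$ gives
\[
\sum_{l\ge1}\int_{\R^d}\widehat{G^\eps}(n)\,|\cF(B[\mf]\sigma_l)(n)|^2\dd n=(2\pi)^{-d/2}\int_{\R^d}\mathbb F_{\gamma^\eps}(n)\,\widehat{\mf}(n)\cdot\overline{\widehat{\mf}}(n)\dd n .
\]
On the left, monotone convergence (in $n$ and in $l$) produces the limit $\sum_l\|B[\mf]\sigma_l\|_{\dot H^{-s}}^2=\|B[\mf]\|^2_{\HS(H^{d/2+\alpha}_{\div};\dot H^{-s})}$ (all the $B[\mf]\sigma_l$ lie in $\dot H^{-s}$ since $B[\mf]\sigma_l\in\cS$ is a divergence, so $\cF(B[\mf]\sigma_l)$ vanishes at $0$ and $s<d/2+1$). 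On the right, $\widehat{G^\eps}(k)\uparrow|k|^{-2s}$ and the matrix factor in \eqref{eq:F_gamma} is pointwise $\le C|k|^2$, so dominated convergence in the inner $k$--integral gives $\mathbb F_{\gamma^\eps}(n)\to\mathbb F_{d,s,\alpha}(n)$ for every $n$; moreover $|\mathbb F_{\gamma^\eps}(n)\,\widehat{\mf}(n)\cdot\overline{\widehat{\mf}}(n)|\le|\mathbb F_{\gamma^\eps}(n)|\,|\widehat{\mf}(n)|^2\lesssim\langle n\rangle^{2-2s}|\widehat{\mf}(n)|^2\in L^1(\R^d)$ by the estimate below, so dominated convergence yields the right-hand side $\to(2\pi)^{-d/2}\int\mathbb F_{d,s,\alpha}(n)\widehat{\mf}(n)\cdot\overline{\widehat{\mf}}(n)\dd n$. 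This proves the representation formula for $\mf\in\cS_{\div}$.

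\emph{The pointwise estimate and the inequality.} Each of the three matrices $|P^\perp_{n-k}k|^2\,I$, $(d-1)kk^\top$, $P^\perp_{n-k}kk^\top$ in \eqref{eq:F_def} has operator norm $\le C|k|^2$, so $|\mathbb F_{d,s,\alpha}(n)|\lesssim\int_{\R^d}\langle n-k\rangle^{-d-2\alpha}|k|^{2-2s}\dd k$. This integral converges at $k=0$ because $2-2s>-d$, i.e.\ $s<d/2+1$, and at $k=\infty$ because $(-d-2\alpha)+(2-2s)<-d$, i.e.\ $s>1-\alpha$, which is implied by $s>1$. For $|n|\ge1$ split it into the region $|k|\le|n|/2$, where $\langle n-k\rangle\approx\langle n\rangle$ and the contribution is $\lesssim\langle n\rangle^{-d-2\alpha}|n|^{\,d-2s+2}\lesssim\langle n\rangle^{2-2s}$, and the region $|k|>|n|/2$, where $|k|^{2-2s}\lesssim\langle n\rangle^{2-2s}$ (since $2-2s\le0$) and $\int\langle n-k\rangle^{-d-2\alpha}\dd k$ is a finite constant; for $|n|\le1$ the integral is bounded uniformly since $\langle n-k\rangle\gtrsim\langle k\rangle$. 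Hence $|\mathbb F_{d,s,\alpha}(n)|\lesssim\langle n\rangle^{2-2s}$, and plugging this into the representation formula gives $\|B[\mf]\|^2_{\HS(H^{d/2+\alpha}_{\div};\dot H^{-s})}\le C\int_{\R^d}\langle n\rangle^{2-2s}|\widehat{\mf}(n)|^2\dd n=C\|\mf\|_{H^{-s+1}}^2$, first for $\mf\in\cS_{\div}$ and then for all $\mf\in H^{-s+1}_{\div}$ by density.

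\emph{Extension and main obstacle.} For $\mf\in H^{-s+1}_{\div}$ both sides of the identity are continuous in $\|\cdot\|_{H^{-s+1}}$ (the left one by the bound just proved, the right one by $|\mathbb F_{d,s,\alpha}(n)|\lesssim\langle n\rangle^{2-2s}$), so approximation by fields in $\cS_{\div}$ in the $H^{-s+1}$ norm closes that case. For a general $\mf\in H^{-s}_{\div}$ one truncates in frequency, $\mf^{(j)}:=\cF^{-1}(\mathbbm{1}_{\{|n|\le j\}}\widehat{\mf})\in H^{-s+1}_{\div}$, applies the formula to $\mf^{(j)}$, and lets $j\to\infty$, both sides understood in $[0,+\infty]$: the right-hand side is $(2\pi)^{-d/2}\int_{\{|n|\le j\}}\mathbb F_{d,s,\alpha}\widehat{\mf}\cdot\overline{\widehat{\mf}}$, while on the left $\|B[\mf^{(j)}]\|^2_{\HS}$ is nondecreasing in $j$ (the cross terms $\langle B[\mf^{(j)}],B[\mf^{(j+1)}-\mf^{(j)}]\rangle_{\HS}$ vanish by the bilinear version of the formula, valid on $H^{-s+1}_{\div}$, applied to the frequency-disjoint pieces) and a Fatou argument gives $\|B[\mf]\|^2_{\HS}\le\liminf_j\|B[\mf^{(j)}]\|^2_{\HS}$. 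The routine ingredients are the choice of admissible weights $\widehat{G^\eps}$ and the elementary convolution estimate for $\mathbb F_{d,s,\alpha}$, where the window $1<s<d/2+1$ enters exactly to make that integral finite with the stated decay; the only point requiring genuine care---as already in the proof of \cref{lem:FB_HS_norm_in_L2}---is making the $\eps\downarrow0$ and $j\to\infty$ limits fully rigorous when the quantities involved may be infinite.
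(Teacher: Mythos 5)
Your treatment of the Schwartz case, the uniform kernel estimate $|\mathbb F_{d,s,\alpha}(n)|\lesssim\langle n\rangle^{2-2s}$ (same region-splitting as the paper), and the resulting bound $\|B[\mf]\|_{\HS(H^{d/2+\alpha}_{\div};\dot H^{-s})}\lesssim\|\mf\|_{H^{-s+1}}$ are correct and use the same ingredients as the paper: approximate the weight $|n|^{-2s}$ by Schwartz weights, invoke \cref{lem:FB_HS_norm_in_L2}, and pass to the limit in the weight. The density extension to $H^{-s+1}_{\div}$ is also fine.

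The genuine gap is in the last step, the extension from $H^{-s+1}_{\div}$ to all of $H^{-s}_{\div}$. Your frequency-truncation argument produces only one inequality: Fatou (lower semicontinuity of the Hilbert--Schmidt norm along $\mf^{(j)}\to\mf$) gives $\|B[\mf]\|^2_{\HS}\le\lim_j\|B[\mf^{(j)}]\|^2_{\HS}=(2\pi)^{-d/2}\int\mathbb F_{d,s,\alpha}\widehat{\mf}\cdot\overline{\widehat{\mf}}$, but nothing in your scheme yields the reverse inequality, and Fatou cannot: since $\mathbb F_{d,s,\alpha}(n)$ acts on $n^\perp$ like $c|n|^{2-2s}$ at high frequency, for $\mf\in H^{-s}_{\div}\setminus H^{-s+1}$ the right-hand side is $+\infty$ and your argument leaves open whether $\|B[\mf]\|^2_{\HS}$ is also infinite, so the claimed \emph{equality} is not proved in that regime (the ``In particular'' bound, which only needs ``$\le$'', survives). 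The reason the paper does not run into this is that it never does density in $\mf$: \cref{lem:FB_HS_norm_in_L2} is already stated for every $\mf\in H^{-d/2-\alpha}_{\div}\supset H^{-s}_{\div}$ (in the relevant range $s\le d/2+\alpha$), so one applies it to the \emph{given} $\mf$ with the smoothed weights $\widehat G^{\delta}\nearrow|n|^{-2s}$ and passes to the limit only in the weight: monotone convergence identifies the left-hand side with $\|B[\mf]\|^2_{\HS(H^{d/2+\alpha}_{\div};\dot H^{-s})}$ in $[0,+\infty]$, while on the Fourier side one uses the uniform bound $|\mathbb F_{\gamma_\delta}(n)|\lesssim\langle n\rangle^{2-2s}$ (valid uniformly because $\widehat G^\delta\le|n|^{-2s}$ --- state this explicitly rather than citing the bound for the limit kernel) together with dominated convergence when $\mf\in H^{-s+1}$, or, to cover the case where both sides may be infinite, monotone convergence after observing that for $w\perp n$ the pointwise integrand satisfies $|P^\perp_{n-k}k|^2|w|^2+(d-1)(w\cdot k)^2-2(w\cdot P^\perp_{n-k}k)(w\cdot k)\ge(d-2)(w\cdot k)^2\ge0$, so that $\overline{\widehat{\mf}}(n)\cdot\mathbb F_{\gamma_\delta}(n)\widehat{\mf}(n)$ increases as $\delta\downarrow0$. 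If you restructure your proof this way, fixing $\mf$ and smoothing only the weight, the two-step density extension and the truncation/Fatou step become unnecessary and the equality holds as stated.
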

\begin{proof}
	 Let $\widehat G^\delta(n) \in \mathcal S$ be smooth nonnegative, increasing approximations of $ |n|^{-2s}$ (i.e. $0\le \widehat G^\delta(n) \nearrow |n|^{-2s}$). We define the non-negative measures
	 $\gamma(\mathrm{d}n) \coloneqq |n|^{-2s} \lambda (\mathrm{d}n)$ and $\gamma_\delta(\mathrm{d}n) \coloneqq \widehat G^\delta (n) \lambda (\mathrm{d}n)$. Let $\{\sigma_k\}_{k\ge 1 }$ be an orthonormal basis of $H^{\frac d2 + \alpha}_{\div}$.
    We have
	\begin{align*}
	\| B[\mf] \|^2_{\operatorname{HS}(H^{d/2 + \alpha}_{\div}; \dot H^{-s})}
	% &= \sum_k \| B[\mf] \sigma_k  \|_{\dot H^{-s^\prime-1}}^2 
	&= \sum_k \| \cF B[\mf] \sigma_k  \|_{L^2_\gamma}^2 \\
	% &= \lim_{\delta\to 0} \sum_k \| \cF B[\mf] \sigma_k  \|_{L^2_{\gamma_\delta}}^2 \\
	&= \lim_{\delta\to 0} \| \cF B[\mf] \|^2_{\operatorname{HS}(H^{d/2 + \alpha}_{\div}; L^2_{\gamma_\delta})} 
	\end{align*}
	hence, thanks to \cref{lem:FB_HS_norm_in_L2},
	\begin{equation*}
		\| B[\mf] \|^2_{\operatorname{HS}(H^{d/2 + \alpha}_{\div}; \dot H^{-s})}
		= (2\pi)^{-d/2} \lim_{\delta\to 0} \int_{\R^d}  \mathbb F_{\gamma_\delta} (n) \widehat{\mf} (n) \cdot \overline{\widehat{\mf}} (n) \dd n.
	\end{equation*}
    Moreover $\mathbb F_{\gamma_\delta}$ tends pointwise to $\mathbb F_{\gamma}=\mathbb F_{d,s,\alpha}$, therefore by dominated convergence theorem it is enough to show that $|\mathbb F_{\gamma_\delta}(n)| \lesssim \langle n \rangle^{-2s+2}$ uniformly in $\delta$. We call $s^\prime=s-1$.
    We bound
	\begin{align*}
            |\mathbb F_{\gamma_\delta}(n)| \lesssim \int_{\R^d} \langle n-k \rangle^{-d-2\alpha}  |k| ^{-2s^\prime} \dd k.
	\end{align*}
        For $|n|\le 1$, we have $\langle k\rangle \approx \langle n-k \rangle $ and so $|F_{\gamma_\delta}(n)|\lesssim 1$. For $|n|>1$, we split 
        \begin{align*}
            \int_{\R^d} \langle n-k \rangle^{-d-2\alpha}  |k| ^{-2s^\prime} \dd k
            &=  \int_{|k|\le|n|/2} \ldots +  \int_{|k|>|n|/2} \ldots .
        \end{align*}
	In the region $|k|\le|n|/2$, we have $|n-k|\ge |n|$/2, hence 
	\begin{align*}
		\int_{|k|\le|n|/2} \langle n-k \rangle^{-d-2\alpha}  |k|^{-2s^\prime} \dd k
		&\lesssim \bracn^{-d-2\alpha}\int_{|k|\le|n|/2}  |k|^{-2s^\prime} \dd k \\
		&\lesssim \bracn^{-d-2\alpha} |n|^ {d-2s} \lesssim \bracn^{-2s^\prime} .
	\end{align*}
	In the region $|k|>|n|/2$, we have
	\begin{align*}
		\int_{|k|>|n|/2} \langle n-k \rangle^{-d-2\alpha}  |k|^{-2s^\prime} \dd k
		&\lesssim |n|^{-2s^\prime}\int_{\R^d} \langle n-k \rangle^{-d-2\alpha}   \dd k\\
		&\lesssim |n| ^{-2s^\prime}.
	\end{align*}
        We conclude that, uniformly in $\delta$, for every $n$
        \begin{align*}
            |\mathbb F_{\gamma_\delta}(n)| \lesssim \int_{\R^d} \langle n-k \rangle^{-d-2\alpha}  |k| ^{-2s^\prime} \dd k \lesssim \bracn^{-2s^\prime}.
        \end{align*}
        The proof is complete.        
\end{proof}

By the classical theory of It\^o integration (e.g. \cite[Section 4.2.1]{DaPratoZabczyk2014}), we can make sense and control the It\^o integral \eqref{eq:noise_term_ito}:

\begin{corollary}\label{cor:stoch_int_bd}
	Let $s\in (0,d/2)$. For every $(\cF_t)_t$-progressively measurable process $\mf$ with paths in $L^2_t(H^{-s}_{\div})$ $\P$-a.s., the stochastic integral
	\begin{align*}
		N_t:= \int_0^t B[\mf_r] \dd W^\cK_r
	\end{align*}
	is a well-defined local martingale with values in $\dot{H}^{-s-1}_{\div}$ and the following bound holds for its quadratic variation $[N]$:
	\begin{align}\label{eq:stoch_int_bd} 
		\trace_{\dot{H}^{-s-1}_{\div}}[N]_t  = \int_0^t \| B(\mf_r) \|^2_{\operatorname{HS}(H^{d/2 + \alpha}_{\div}; \dot{H}^{-s-1})} \dd r \lesssim \int_0^t \|\mf_r\|_{H^{-s}}^2 \dd r.
	\end{align}
\end{corollary}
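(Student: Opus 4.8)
The plan is to obtain the statement as an essentially immediate consequence of \cref{lem:HS_homogeneous}, applied with the Sobolev index $s+1$ in place of $s$, combined with the classical theory of stochastic integration with respect to generalized Wiener processes in Hilbert spaces (as developed in \cite[Section 4.2]{DaPratoZabczyk2014}). The first step is the bookkeeping on indices: since $s\in(0,d/2)$ we have $s+1\in(1,d/2+1)$, so \cref{lem:HS_homogeneous} applies and yields, for every $\mf\in H^{-s}_{\div}$,
\begin{equation*}
    \| B[\mf] \|_{\HS(H^{d/2 + \alpha}_{\div}; \dot H^{-s-1})} \lesssim \|\mf\|_{H^{-s}}<\infty,
\end{equation*}
where we also note that $-s-1<d/2$, so that $\dot H^{-s-1}$ is indeed a Hilbert space, and that $B[\mf]$ takes divergence-free values by \cref{lem:B_is_well_defined}.

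Next I would verify that $r\mapsto B[\mf_r]$ is an admissible integrand for the $H^{d/2+\alpha}_{\div}$-valued Wiener process $W^\cK$ with values in the target space $\dot H^{-s-1}_{\div}$. Progressive measurability of $r\mapsto B[\mf_r]\in \HS(H^{d/2+\alpha}_{\div};\dot H^{-s-1})$ follows from the progressive measurability of $\mf$ together with the boundedness (hence continuity) of the linear map $\mf\mapsto B[\mf]$ between the relevant spaces, established in \cref{lem:B_is_well_defined}; and the local square-integrability in time,
\begin{equation*}
    \int_0^t \| B[\mf_r] \|^2_{\HS(H^{d/2 + \alpha}_{\div}; \dot H^{-s-1})}\dd r \lesssim \int_0^t \|\mf_r\|_{H^{-s}}^2\dd r<\infty\qquad\text{$\P$-a.s.},
\end{equation*}
is exactly the displayed bound integrated in time, finite since $\mf$ has paths in $L^2_t(H^{-s}_{\div})$ by assumption. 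By the cited theory (using the series representation of $W^\cK$ from \cref{lem:noise_series}) the stochastic integral $N_t=\int_0^t B[\mf_r]\dd W^\cK_r$ is therefore a well-defined $\dot H^{-s-1}_{\div}$-valued local martingale, and its quadratic variation satisfies the Itô-isometry-type identity $\trace_{\dot H^{-s-1}_{\div}}[N]_t=\int_0^t \| B[\mf_r] \|^2_{\HS(H^{d/2 + \alpha}_{\div}; \dot H^{-s-1})}\dd r$.

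Combining the two previous steps gives the claimed chain of (in)equalities. I do not expect any genuine obstacle here: all the analytic work is contained in \cref{lem:HS_homogeneous} (and upstream in \cref{lem:smooth_Ito_term,lem:FB_HS_norm_in_L2}), and what remains is only to match the index ranges and to quote the Hilbert-space stochastic integration framework for a generalized (non trace-class) Wiener process. The one point deserving a line of care is precisely that $W^\cK$ is not trace-class, so the integral must be set up with respect to its reproducing-kernel Hilbert space $U_0=H^{d/2+\alpha}_{\div}$ (\cref{lem:rep_kernel}), against which $B[\mf]$ is Hilbert-Schmidt-valued; this is exactly the content packaged in \cref{lem:HS_homogeneous}.
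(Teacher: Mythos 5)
Your proposal is correct and follows exactly the route the paper takes: the paper states the corollary as an immediate consequence of \cref{lem:HS_homogeneous} (applied with index $s+1\in(1,d/2+1)$) together with the classical Hilbert-space stochastic integration theory of \cite[Section 4.2.1]{DaPratoZabczyk2014} for the generalized Wiener process with reproducing kernel $U_0=H^{d/2+\alpha}_{\div}$, which is precisely your argument. The index bookkeeping, the measurability remark via \cref{lem:B_is_well_defined}, and the Itô-isometry identity for the quadratic variation are all as in the paper's (implicit) proof.
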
 

We remark that a finer analysis of the quadratic variation of the noise will be carried out in the sequel and will be responsible of the main result of this work. 

\subsection{Definition of solution}

Based on the definition of transport and stretching term in \cref{subsec:transport_stretching}, we give the rigorous definition of solutions to \eqref{eq:KK} with $u^\cK$ \emph{nonsmooth} Kraichnan noise. We recall that, for given $\alpha\in (0,1)$, $Q$ is the covariance function given by \eqref{eq:Q_spectrum}.

\begin{definition}\label{def:sol}
    Fix $s\in (0,d/2)$ and $\alpha\in (0,1)$. An $\dot{H}^{-s}$-valued, probabilistically weak solution to \eqref{eq:KK} is a tuple $(\Omega,\cA,(\cF_t)_t,\P,W^\cK,\mf)$, where $(\Omega,\cA,(\cF_t)_t,\P)$ is a filtered probability space (satisfying the standard assumption), $W^\cK$ is a $(\cF_t)_t$-adapted Wiener process with covariance function $Q$ (given in \eqref{eq:Q_spectrum}), and $\mf$ is an $\dot{H}^{-s}_{\div}$-valued, $(\cF_t)_t$-progressively measurable process $\mf$ such that $\mf\in L^2_t(\dot{H}^{-s}_{\div})\cap C_t(H^{-s-2}_{\div})$ $\P$-a.s.\ and
    \begin{align}\label{eq:KK_def}
        \mf_t = \mf_0 -\int_0^t B[\mf_r] \dd W^\cK_r +\int_0^t \frac{c_0}{2}\Delta\mf_r \dd r,\quad \forall t\in[0,T],
    \end{align}
    where the equality makes sense in $H^{-s-2}_{\div}$ thanks to \cref{cor:stoch_int_bd}.
\end{definition}

\subsection{Main result}

We start by introducing the main assumptions on $d$, $s$ and $\alpha$.
\begin{hypothesis}\label{hp:main_hp}
    Let
    \begin{equation*}
        d\ge 3,  \quad\quad \alpha\in (0,\ap\wedge 1),  \quad\quad s\in \left(\sma,\spa \wedge \frac{d}{2}\right),
    \end{equation*}
    where
    \begin{align*}
        &\ap = - \frac{d-1}{4} + \frac1 4 \sqrt{\frac{2(d-1)^3}{d-2}},\\
        &\spma = \frac d4 + 1 - \alpha \frac{d-2}{d-1} \pm \frac{\sqrt{d}}{4(d-1)} \sqrt{\Delta^s_{d,\alpha}},
    \end{align*}
    and
    \begin{equation*}
        \Delta^s_{d,\alpha} = -16\alpha^2(d-2)+ d(d-1)^2-8\alpha(d^2-3d+2).
    \end{equation*}
\end{hypothesis}
As discussed in \cref{subsec:parameters}, these assumptions are equivalent to $\eta_{d,s,\alpha}>0$ (the constant in \cref{thm:main_bd_integral}), constrained to $d\ge 3$, $\alpha\in (0,1)$ and $s\in (1,d/2)$; see also \cref{fig:ellipsoid} for a graphical representation of \cref{hp:main_hp}. The following is our main result.

\begin{theorem}\label{thm:main}
    Let $d$, $s$ and $\alpha$ satisfy \cref{hp:main_hp} and let $Q$ be the covariance function given by \eqref{eq:Q_spectrum}. Then, the following hold.
    \begin{itemize}
    \item \textbf{Strong existence:} For every $\mf_0$ in $\dot{H}^{-s}_{\div}$, for every given filtered probability probability space $(\Omega,\cA,(\cF_t)_t,\P)$ (satisfying the standard assumption) and every given $(\cF_t)_t$-adapted Wiener process $W^\cK$ with covariance function $Q$, there exists a $\dot{H}^{-s}$ solution $\mf$ to the vector advection equation \eqref{eq:KK}, which satisfies in addition $\mf\in L^\infty_t(\dot{H}^{-s}_{\div})$ $\P$-a.s., $\mf\in L^\infty_t(L^2_\omega(\dot{H}^{-s}_{\div})) \cap L^2_{t,\omega}(\dot{H}^{-s+1-\alpha}_{\div})$ and
    \begin{align}\label{eq:reg_gain}
        \sup_{t\in [0,T]}\E[\|\mf_t\|_{\dot{H}^{-s}}^2] +\eta_{d,s,\alpha}\int_0^T \E[\|\mf_r\|_{\dot{H}^{-s+1-\alpha}}^2] \dd r \le e^{\rho_{d,s,\alpha}T} \|\mf_0\|_{\dot{H}^{-s}}^2.
    \end{align}
    for constants $\eta_{d,s,\alpha}>0$, $\rho_{d,s,\alpha}\ge 0$ independent of $\mf_0$ and $\mf$.
    \item \textbf{Pathwise uniqueness:} The solution is unique in the class of $\dot{H}^{-s}$-valued solutions $\mf$ satisfying $\mf\in L^2_{t,\omega}(\dot{H}^{-s+1-\alpha}_{\div})$.
    \end{itemize}
\end{theorem}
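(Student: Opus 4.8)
The proof assembles the viscous approximations of \cref{sec:Viscous_approximations}, the a priori estimate of \cref{sec:Main_Bound}, a compactness argument, and a Gy\"ongy--Krylov/Yamada--Watanabe step; the two assertions are intertwined, since strong existence is obtained through pathwise uniqueness.

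\textbf{Uniform a priori bounds.} For $\nu>0$ I would work with the viscous equation of \cref{sec:Viscous_approximations}, posed in the Gelfand triple $\dot H^{-s+1}_{\div}\hookrightarrow\dot H^{-s}_{\div}\hookrightarrow\dot H^{-s-1}_{\div}$: thanks to the extra viscosity $\nu\Delta\mf$ the problem is coercive in $\dot H^{-s+1}_{\div}$, the Hilbert--Schmidt bound of \cref{lem:HS_homogeneous} controls $B[\cdot]$, and variational SPDE theory yields a unique solution $\mf^\nu\in L^2_\omega(C_t\dot H^{-s}_{\div})\cap L^2_{t,\omega}(\dot H^{-s+1}_{\div})$ for which the It\^o formula for $\|\cdot\|^2_{\dot H^{-s}}$ is legitimate. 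Applying it, the viscous term gives $-2\nu\|\mf^\nu_r\|^2_{\dot H^{-s+1}}\le 0$, while the It\^o--Stratonovich corrector $\tfrac{c_0}{2}\Delta\mf^\nu$ and the quadratic variation $\|B[\mf^\nu_r]\|^2_{\HS}$ collapse---this is precisely the content of \cref{sec:Main_Bound}, via the Fourier-mode analysis of the kernel $\mathbb F_{d,s,\alpha}$ of \cref{lem:HS_homogeneous}---into $-\eta_{d,s,\alpha}\|\mf^\nu_r\|^2_{\dot H^{-s+1-\alpha}}$ plus a remainder bounded by $\rho_{d,s,\alpha}\|\mf^\nu_r\|^2_{\dot H^{-s}}$. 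Taking expectations (the stochastic integral is a true martingale by \cref{cor:stoch_int_bd}) and Gr\"onwall then give \eqref{eq:reg_gain} for $\mf^\nu$ with $\eta_{d,s,\alpha},\rho_{d,s,\alpha}$ independent of $\nu$, hence uniform bounds for $(\mf^\nu)_\nu$ in $L^\infty_t(L^2_\omega\dot H^{-s}_{\div})\cap L^2_{t,\omega}(\dot H^{-s+1-\alpha}_{\div})$.

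\textbf{Compactness and a weak solution.} From the equation, the drift of $\mf^\nu$ (including the viscous part) stays bounded in $L^2_{t,\omega}(\dot H^{-s-1-\alpha}_{\div})$ and the martingale part has bracket controlled in $\dot H^{-s-1}_{\div}$ by \cref{cor:stoch_int_bd}, so a Kolmogorov-type estimate gives uniform H\"older-in-time bounds in a fixed space $H^{-m}_{\div}$ with $m$ large. Together with the uniform $L^2_{t,\omega}\dot H^{-s+1-\alpha}$ bound and the locally compact Sobolev embedding $\dot H^{-s+1-\alpha}_{\div}\hookrightarrow H^{-s+1-\alpha-\epsilon}_{\div,\loc}$, an Aubin--Lions--Simon argument makes the laws of $(\mf^\nu,W^\cK)$ tight in $\big(L^2_t(H^{-s+1-\alpha-\epsilon}_{\div,\loc})\cap C_t(H^{-m}_{\div})\big)\times C_t(H^{-m}_{\div})$. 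By Prokhorov and Skorokhod I pass to a.s.\ convergent copies on a new probability space, with limit $(\mf,W^\cK)$; the linear terms pass to the limit directly, and the stochastic integral is identified in the standard way, the convergence of brackets being quantified by \cref{lem:HS_homogeneous}. This produces an $\dot H^{-s}$-valued probabilistically weak solution in the sense of \cref{def:sol}, which inherits \eqref{eq:reg_gain} by lower semicontinuity of the norms and satisfies $\mf\in L^\infty_t\dot H^{-s}_{\div}$ $\P$-a.s.\ from the nonnegative-supermartingale structure of $e^{-\rho_{d,s,\alpha}t}\|\mf_t\|^2_{\dot H^{-s}}$ modulo the dissipation term.

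\textbf{Uniqueness and strong existence.} Linearity of \eqref{eq:KK_def} makes pathwise uniqueness short: if $\mf^1,\mf^2$ are two $\dot H^{-s}$-valued solutions on the same stochastic basis, with the same $W^\cK$ and datum, both in $L^2_{t,\omega}(\dot H^{-s+1-\alpha}_{\div})$, then $N:=\mf^1-\mf^2$ solves \eqref{eq:KK_def} with $N_0=0$, and the estimate of \cref{sec:Main_Bound} applied to $N$ gives $\E\|N_t\|^2_{\dot H^{-s}}\le\rho_{d,s,\alpha}\int_0^t\E\|N_r\|^2_{\dot H^{-s}}\,\dd r$ for all $t$, whence $N\equiv 0$ by Gr\"onwall. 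Combining weak existence with this pathwise uniqueness, the Gy\"ongy--Krylov (Yamada--Watanabe) theorem upgrades the solution to a strong one on the originally prescribed filtered probability space carrying the given $W^\cK$; uniqueness in the stated class is exactly what was just proved.

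\textbf{Main obstacle.} The crux is the dissipation identity invoked in the first step, i.e.\ \cref{sec:Main_Bound}: one must show that, after using isotropy of $Q$ and $G_s$ and the divergence-free condition to discard the $nn^\top/|n|^2$ components of the relevant Fourier kernels, the combination of $\tfrac{c_0}{2}\Delta$ and $\|B[\mf]\|^2_{\HS}$ reduces exactly to $-\eta_{d,s,\alpha}\|\mf\|^2_{\dot H^{-s+1-\alpha}}$ with $\eta_{d,s,\alpha}>0$ precisely under \cref{hp:main_hp}, which requires the delicate bookkeeping of the kernel $\mathbb F_{d,s,\alpha}$. A subtler point internal to the present proof is that no naive Gelfand-triple It\^o formula is available for $\|\cdot\|^2_{\dot H^{-s}}$ at the level of the limit equation (the corrector $\Delta$ is two derivatives strong while the net gain is only $1-\alpha$); this is why existence is routed through the $\nu$-viscous regularization, where coercivity in $\dot H^{-s+1}$ restores it, and uniqueness is deduced directly from the estimate of \cref{sec:Main_Bound} stated for solutions in the class $L^2_{t,\omega}(\dot H^{-s+1-\alpha}_{\div})$.
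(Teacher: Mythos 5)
Your overall architecture (viscous approximation with the Fourier-side dissipation bound, compactness/Skorokhod for weak existence, pathwise uniqueness plus Yamada--Watanabe for strong existence) matches the paper's. The genuine gap is in the uniqueness step. You write that for the difference $N=\mf^1-\mf^2$ "the estimate of \cref{sec:Main_Bound} applied to $N$ gives $\E\|N_t\|^2_{\dot H^{-s}}\le\rho_{d,s,\alpha}\int_0^t\E\|N_r\|^2_{\dot H^{-s}}\dd r$", but \cref{prop:main} is a pointwise bound on the kernel/Hilbert--Schmidt norms, not an energy inequality for solutions: to use it you must first justify an It\^o formula (or at least an energy identity) for $\|N_t\|^2_{\dot H^{-s}}$ at the level of the \emph{inviscid} equation, for solutions that are merely $\dot H^{-s}$-valued and in $L^2_{t,\omega}(\dot H^{-s+1-\alpha}_{\div})$. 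You yourself point out in your closing paragraph that no naive Gelfand-triple It\^o formula is available there (the corrector is two derivatives strong, the gain only $1-\alpha$), and then assert that uniqueness is "deduced directly" from the Section~5 estimate --- that is exactly the circular step. The paper closes this gap in \cref{sec:ProofUniqueness}: it mollifies the kernel, $\widehat{G_\delta}(n)=|n|^{-2s}\varphi_\delta(n)$, applies It\^o to the smooth functional $\langle \mf,G_\delta\ast\mf\rangle$, kills the local martingale by stopping at $\tau_R$ (\cref{lem:limit_uniq}), and then passes $\delta\to0$ via a uniform-in-$\delta$ bound $|\mathbb K_\delta(n)|\lesssim|n|^{-2s+2-2\alpha}$, which uses the divergence-free reduction $n\cdot\widehat\mf(n)=0$, a second-order Taylor expansion with an odd-symmetry cancellation, and precisely the assumed $L^2_{t,\omega}(\dot H^{-s+1-\alpha})$ regularity. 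Without some substitute for this kernel-truncation argument, your uniqueness proof does not go through, and since strong existence is routed through pathwise uniqueness, the existence statement is affected as well.

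Two secondary points. First, the triple $\dot H^{-s+1}_{\div}\hookrightarrow\dot H^{-s}_{\div}\hookrightarrow\dot H^{-s-1}_{\div}$ you propose for the viscous problem is not a Gelfand triple on $\R^d$: $\dot H^{-s+1}\not\subset\dot H^{-s}$ because of low frequencies, which is why the paper introduces the spaces $\dot H^{-s+1}_{\vee,\div}$ and $\dot H^{-s-1}_{\wedge,\div}$ (\cref{lem:topological_dual}, \cref{prop:wellposed_viscous}); also note the monotonicity condition itself uses the main bound \eqref{eq:d}, not only \cref{lem:HS_homogeneous}, unless $\nu$ is large. Second, \cref{cor:stoch_int_bd} gives only a \emph{local} martingale, so "taking expectations" and the sup-in-time moment bounds require either localization or the stochastic Gr\"onwall lemma as in \cref{lem:sup_by_stoch_Gronwall}; relatedly, the $C^\gamma_t$/Kolmogorov bounds you invoke for tightness need $p>2$ moments, which are only available for the stopped processes (the paper controls $\P(\tau_R<T)$ with the $\E\sup_t\|\cdot\|^{2p}$, $p<1$, bound). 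These are fixable, but as written they are gaps in the quantitative bookkeeping that \eqref{eq:reg_gain} and the tightness argument rely on.
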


\section{Viscous Approximations}\label{sec:Viscous_approximations}

In this section we consider viscous approximations for \eqref{eq:KK}. We show existence and uniqueness of such approximations in the classical framework of SPDEs with monotone coefficients first introduced in \cite{KrylovRozovskii1979} and then refined by many authors, see for instance \cite{LiuRockner2010}.

Let $\nu>0$ be the viscosity coefficient. The viscous approximation of \eqref{eq:KK} is
\begin{equation}\label{eq:SPDE_viscous}
	\dd \mf^\nu + B[\mf^\nu] \dd W^\cK 
%	\sum_{k=1}^\infty \left( \sigma_k \cdot \nabla \mf^\nu - \mf^\nu \cdot \nabla \sigma_k \right)  \dd W^k_t
	= \left( \nu +\frac{c_0}{2} \right) \Delta \mf^\nu \dd t.
\end{equation}
For $s\in (0,d/2)$, the rigorous definition of $\dot{H}^{-s}$-valued solution can be given analogously as in \cref{def:sol}, so we omit it.

\begin{proposition}\label{prop:wellposed_viscous}
    Let $d$, $s$ and $\alpha$ satisfy \cref{hp:main_hp}. For every $\nu>0$ and for every $\mf_0 \in \dot{H}^{-s}_{\div}$ there exists a unique strong solution $\mf^\nu$ to \eqref{eq:SPDE_viscous} in the class of solutions in $L^2_{t,\omega}(\dot{H}^{-s}_{\vee,\div})$ with sample paths in $C_t(\dot{H}^{-s}_{\div})$. Moreover, the solution $\mf^\nu$ satisfies $\E \sup_{t\in[0,T]} \|\mf^\nu_t\|^2_{\dot H^{-s}}<\infty$ and the following It\^o formula holds $\P$-a.s.:
    \begin{align}
    \begin{aligned}\label{eq:energy_balance}
        \|\mf^\nu_t\|^2_{\dot H^{-s}} 
        &+ (2\nu + c_0) \int_0^t \|\mf^\nu_r\|^2_{\dot H^{-s+1}} \dd r - \int_0^t  \|B[\mf^\nu_r]\|^2_{\HS(H^{d/2+\alpha}_{\div};\dot H^s_{\div})}\dd r \\
        &= \|\mf_0\|^2_{\dot H^{-s}} + 2\int_0^t \langle \mf^\nu_r, B[\mf^\nu_r]  \dd W^\cK_r \rangle_{\dot H^{-s}},\quad \forall t\in [0,T].
    \end{aligned}
    \end{align}
\end{proposition}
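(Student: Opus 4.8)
The plan is to cast \eqref{eq:SPDE_viscous} in the variational (Gelfand triple) framework for monotone SPDEs and invoke the classical existence-uniqueness theorem. First I would fix the triple $V \hookrightarrow H \hookrightarrow V^*$ with $H = \dot H^{-s}_{\div}$, $V = \dot H^{-s+1}_{\div}$, and $V^* = \dot H^{-s-1}_{\div}$ (using that $s<d/2$ so these homogeneous spaces are genuine Hilbert spaces, and the density/embedding is standard by a Fourier cutoff argument). The drift operator $\cA(\mf) = (\nu + c_0/2)\Delta\mf$ maps $V$ to $V^*$ continuously, and the diffusion coefficient is $B[\cdot]$, which by \cref{lem:HS_homogeneous} (applied with the index $s+1 \in (1,d/2+1)$, legitimate since $d\ge 3$) maps $H = \dot H^{-s}_{\div}$ into $\HS(H^{d/2+\alpha}_{\div};\dot H^{-s-1}_{\div}) = \HS(U_0;V^*)$ with the linear bound $\|B[\mf]\|_{\HS(U_0;V^*)}\lesssim \|\mf\|_{\dot H^{-s}} = \|\mf\|_H$; crucially one also needs $B[\cdot]$ bounded from $V=\dot H^{-s+1}_{\div}$ into $\HS(U_0;H=\dot H^{-s}_{\div})$, which is again \cref{lem:HS_homogeneous} with index $s\in(1,d/2)$ giving $\|B[\mf]\|_{\HS(U_0;\dot H^{-s})}\lesssim\|\mf\|_{H^{-s+1}}$.

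Next I would verify the four structural conditions (H1)–(H4) of the Krylov–Rozovskii/Liu–Röckner theorem: hemicontinuity of $\lambda\mapsto {}_{V^*}\langle \cA(\mf+\lambda w),v\rangle_V$ (immediate, since $\cA$ is linear); (local) monotonicity of $(\cA,B)$; coercivity; and linear growth. The key computation is that for $\mf,\tilde\mf\in V$,
\begin{align*}
  2{}_{V^*}\langle \cA(\mf)-\cA(\tilde\mf),\mf-\tilde\mf\rangle_V &+ \|B[\mf]-B[\tilde\mf]\|_{\HS(U_0;H)}^2 \\
  &= -(2\nu+c_0)\|\mf-\tilde\mf\|_{\dot H^{-s+1}}^2 + \|B[\mf-\tilde\mf]\|_{\HS(U_0;\dot H^{-s})}^2,
\end{align*}
using linearity of $B$. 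For monotonicity the last term must be absorbed; here the naive bound from \cref{lem:HS_homogeneous} only gives $\|B[\mf-\tilde\mf]\|_{\HS(U_0;\dot H^{-s})}^2\lesssim\|\mf-\tilde\mf\|_{H^{-s+1}}^2$, which includes a low-frequency $\|\mf-\tilde\mf\|_{H^{-s}}^2$ contribution and a high-frequency $\|\mf-\tilde\mf\|_{\dot H^{-s+1}}^2$ contribution; since we only need monotonicity up to a $+C\|\mf-\tilde\mf\|_H^2$ term (which is allowed), and the $\nu\|\mf-\tilde\mf\|_{\dot H^{-s+1}}^2$ viscosity term dominates the high-frequency part after noting the constant in the Hilbert–Schmidt bound is \emph{independent of $\nu$}, we obtain monotonicity with constant $-2\nu\|\cdot\|_{\dot H^{-s+1}}^2 + C\|\cdot\|_{\dot H^{-s}}^2$ — provided the implicit constant from the refined decomposition does not beat $2\nu$. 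Coercivity follows the same way: $2{}_{V^*}\langle\cA(\mf),\mf\rangle_V + \|B[\mf]\|_{\HS(U_0;H)}^2 \le -(2\nu+c_0)\|\mf\|_{\dot H^{-s+1}}^2 + C_1\|\mf\|_{\dot H^{-s+1}}^2 + C_2\|\mf\|_{\dot H^{-s}}^2$, and one needs $2\nu + c_0 > C_1$; this is where the sharp constant matters. A cleaner route, which I would adopt to avoid fighting constants, is to exploit the \emph{exact} Fourier identity in \cref{lem:HS_homogeneous}: the leading-order behaviour of $\mathbb F_{d,s,\alpha}(n)$ as $|n|\to\infty$ is $\sim c|n|^{-2s+2}$ with a coefficient that is \emph{strictly smaller} than the $c_0|n|^{-2s+2}$ coming from the Laplacian term (indeed this is precisely the content of the heuristic in \cref{sec:Heuristic_Derivation}, where $T_{tra}+T_{str}+T_{mix}$ minus the Laplacian yields a \emph{negative} coefficient $-\tilde\eta$ under \cref{hp:main_hp}, hence even without $\nu$ the combination is coercive up to lower order), so $c_0\|\mf\|_{\dot H^{-s+1}}^2 - \|B[\mf]\|_{\HS(U_0;\dot H^{-s})}^2 \ge \eta\|\mf\|_{\dot H^{-s+1-\alpha}}^2 - C\|\mf\|_{\dot H^{-s}}^2 \ge -C\|\mf\|_{\dot H^{-s}}^2$, and then the $2\nu\|\mf\|_{\dot H^{-s+1}}^2$ term gives genuine coercivity for every $\nu>0$. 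Linear growth is immediate from \cref{cor:stoch_int_bd} and the boundedness of $\cA:V\to V^*$.

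With (H1)–(H4) in place, the main theorem of the variational framework (\cite{LiuRockner2010}, building on \cite{KrylovRozovskii1979}) yields a unique solution $\mf^\nu\in L^2_{t,\omega}(V)\cap C_t(H)$ with $\E\sup_{t\le T}\|\mf^\nu_t\|_H^2<\infty$, together with the It\^o formula for $\|\mf^\nu_t\|_H^2$, which upon unwinding the Gelfand-triple pairings is exactly \eqref{eq:energy_balance} (the term ${}_{V^*}\langle B[\mf^\nu]\dd W^\cK,\cdot\rangle$ becomes the $\dot H^{-s}$ inner product on the right, and the trace term $\|B[\mf^\nu]\|_{\HS(U_0;H)}^2=\|B[\mf^\nu]\|_{\HS(H^{d/2+\alpha}_{\div};\dot H^s_{\div})}^2$ appears — here I note $\dot H^s$ versus $\dot H^{-s}$ is a typo to be read as $\dot H^{-s}$, or the paper's convention identifies $H^*$ with $\dot H^{s}$ via the Riesz map). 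One should also check that the solution stays divergence-free: since $\mf_0\in\dot H^{-s}_{\div}$ and both $\cA$ and $B[\cdot]$ preserve the divergence-free constraint (\cref{lem:B_is_well_defined} and $\div\Delta = \Delta\div$), the solution remains in the closed subspace $\dot H^{-s}_{\div}$ — formally by projecting the equation with the Leray projection, which commutes with everything in sight. The main obstacle is the bookkeeping around constants in the monotonicity/coercivity estimates: one must make sure the Hilbert–Schmidt bound on $B$ has a constant independent of $\nu$ (clear from \cref{lem:HS_homogeneous}) and either accept the crude estimate with $\nu$ large enough — which would weaken the statement — or, as sketched above, extract the \emph{sharp} leading coefficient of $\mathbb F_{d,s,\alpha}$ from the self-similar heuristic so that coercivity holds for \emph{all} $\nu>0$; getting that sharp comparison clean (without redoing the full computation of \cref{sec:Main_Bound}) is the delicate point, and in fact the paper may simply defer the precise constant to \cref{sec:Main_Bound} and here use any $\nu$-independent bound, which already suffices for well-posedness of \eqref{eq:SPDE_viscous} since $(2\nu+c_0)\|\mf\|_{\dot H^{-s+1}}^2$ with the full $c_0$ (not just $2\nu$) on the left of \eqref{eq:energy_balance} absorbs the Hilbert–Schmidt term up to lower-order, by \cref{lem:HS_homogeneous} with $s<d/2$.
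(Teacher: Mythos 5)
Your overall strategy---recasting \eqref{eq:SPDE_viscous} in the variational framework for SPDEs with monotone coefficients and checking hemicontinuity, monotonicity, coercivity and growth---is the same as the paper's, but there are two genuine gaps. The first is your Gelfand triple: on the whole space $\R^d$ the homogeneous space $\dot H^{-s+1}_{\div}$ does \emph{not} embed continuously into $H=\dot H^{-s}_{\div}$, since for frequencies $|n|\le\eps$ one has $|n|^{-2s}\ge \eps^{-2}|n|^{-2s+2}$, so a low-frequency bump has arbitrarily large $\dot H^{-s}$ norm relative to its $\dot H^{-s+1}$ norm; a ``Fourier cutoff'' does not repair the missing embedding $V\hookrightarrow H$. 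This is precisely why the paper works with the spaces $V=\dot H^{-s+1}_{\vee,\div}$ (Fourier weight $|n|^{-2s}\vee|n|^{-2s+2}$) and $V^*=\dot H^{-s-1}_{\wedge,\div}$, with the duality through the $\dot H^{-s}$ product established in \cref{lem:topological_dual}; with that choice your scheme goes through, and the coercivity computation must then absorb the extra $\dot H^{-s}$ part of the $\vee$-norm, which is harmless.

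The second gap is the monotonicity/coercivity constant for \emph{every} $\nu>0$. The crude bound of \cref{lem:HS_homogeneous} gives $\|B[v]\|^2_{\HS(H^{d/2+\alpha}_{\div};\dot H^{-s})}\lesssim\|v\|^2_{H^{-s+1}}$ with an implicit constant that has no reason to lie below $c_0$, so it only yields well-posedness for $\nu$ large (as the paper itself remarks); your closing claim that this ``already suffices'' contradicts your earlier, correct observation and is false. Nor can the sharp comparison be ``extracted from the self-similar heuristic'' of \cref{sec:Heuristic_Derivation}: that computation uses the self-similar covariance rather than \eqref{eq:Q_spectrum} and is explicitly non-rigorous. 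What the paper does, and what you must do, is invoke the estimate \eqref{eq:d} of \cref{prop:main} (proved independently in \cref{sec:Main_Bound} via \cref{thm:main_bd_integral}, under \cref{hp:main_hp}) as an input, giving
\begin{equation*}
  -(2\nu+c_0)\|v\|^2_{\dot H^{-s+1}}+\|B[v]\|^2_{\HS(H^{d/2+\alpha}_{\div};\dot H^{-s}_{\div})}
  \le -2\nu\|v\|^2_{\dot H^{-s+1}}+\rho_{d,s,\alpha}\|v\|^2_{\dot H^{-s}},
\end{equation*}
from which monotonicity and coercivity (with $\theta\le 2\nu$) follow for all $\nu>0$. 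With these two corrections your argument matches the paper's; the remaining points you raise (the $\dot H^{s}$ versus $\dot H^{-s}$ typo in \eqref{eq:energy_balance}, the divergence-free constraint being preserved) are fine.
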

With the purpose of proving \cref{prop:wellposed_viscous}, we introduce some auxiliary function spaces. For $s\in\R$, we define the spaces $\dot H^{s+1}_\vee(\R^d)$ and $\dot H^{s-1}_\wedge (\R^d)$ of tempered distributions $f$ over $\R^d$ whose Fourier transform $\widehat f$ belongs to $L^1_{\loc}(\R^d)$ and satisfies respectively
	\begin{align*}
		&\|f\|^2_{\dot H^{s+1}_\vee} \coloneqq \int_{\R^d} |n|^{2s} \vee |n|^{2s+2} |\widehat f (n)|^2 \dd n <\infty,\\
		&\|f\|^2_{\dot H^{s-1}_\wedge} \coloneqq \int_{\R^d} |n|^{2s} \wedge |n|^{2s-2} |\widehat f (n)|^2 \dd n <\infty.
	\end{align*}

We recall the following duality relation, which will be shown in the Appendix:

\begin{lemma}\label{lem:topological_dual}
	For $s<d/2$, $\dot H^{s+1}_\vee(\R^d)$ and $\dot H^{s-1}_\wedge (\R^d)$ are Hilbert spaces. Moreover we can identify the topological dual $(\dot H^{s+1}_\vee)^*$ with $\dot H^{s-1}_\wedge$ via the $\dot H^s$ scalar product.
\end{lemma}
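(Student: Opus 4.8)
The plan is to identify both spaces, via the Fourier transform, with weighted $L^2$ spaces and to exploit a single pointwise identity between the two weights. Set $w_+(n) := |n|^{2s}\vee|n|^{2s+2}$ and $w_-(n) := |n|^{2s}\wedge|n|^{2s-2}$, so that $\|f\|_{\dot H^{s+1}_\vee}^2 = \int_{\R^d} w_+(n)\,|\widehat f(n)|^2\,\dd n$ and $\|f\|_{\dot H^{s-1}_\wedge}^2 = \int_{\R^d} w_-(n)\,|\widehat f(n)|^2\,\dd n$. Checking the two regimes $|n|\le 1$ and $|n|\ge 1$ separately, one has $w_+(n)=|n|^{2s}$ for $|n|\le 1$ and $w_+(n)=|n|^{2s+2}$ for $|n|\ge 1$, and symmetrically for $w_-$; in particular the crucial algebraic identity
\[
w_+(n)\,w_-(n) = |n|^{4s}, \qquad n\neq 0,
\]
holds, equivalently $w_-(n) = |n|^{4s}/w_+(n)$. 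Everything below rests on this identity.

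First I would verify that $\dot H^{s+1}_\vee(\R^d)$ and $\dot H^{s-1}_\wedge(\R^d)$ are Hilbert spaces, i.e. that the natural map $f\mapsto w_\pm^{1/2}\widehat f$ is an isometric isomorphism onto $L^2(\R^d)$. Injectivity is immediate (if $w_\pm^{1/2}\widehat f=0$ then $\widehat f=0$ a.e., hence $f=0$ as a tempered distribution) and the map is an isometry by definition, so the only point is surjectivity: given $F\in L^2(\R^d)$ one sets $\widehat f := w_\pm^{-1/2}F$ and must check $\widehat f\in L^1_{\loc}(\R^d)$, so that $f\in\mathcal{S}'(\R^d)$ and $f$ is a legitimate element of the space. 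Away from the origin $w_\pm$ is locally bounded above and below, hence there $\widehat f\in L^2_{\loc}\subset L^1_{\loc}$; near the origin $w_\pm(n)=|n|^{2s}$, so by Cauchy--Schwarz $\int_{|n|\le 1}|\widehat f|\,\dd n \le \big(\int_{|n|\le 1}|n|^{-2s}\,\dd n\big)^{1/2}\,\|F\|_{L^2}$, and $\int_{|n|\le 1}|n|^{-2s}\,\dd n<\infty$ exactly because $s<d/2$. This is the only place the hypothesis $s<d/2$ enters, and it is genuinely needed: it ensures the two spaces consist of honest tempered distributions and are complete.

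For the duality statement, define the pairing $\langle f,g\rangle := \int_{\R^d}|n|^{2s}\,\widehat f(n)\,\overline{\widehat g(n)}\,\dd n$ for $f\in\dot H^{s+1}_\vee$ and $g\in\dot H^{s-1}_\wedge$; this is the $\dot H^s$ scalar product. Writing $|n|^{2s}=w_+(n)^{1/2}\,w_-(n)^{1/2}$ and applying Cauchy--Schwarz gives $|\langle f,g\rangle|\le\|f\|_{\dot H^{s+1}_\vee}\|g\|_{\dot H^{s-1}_\wedge}$, so $J\colon g\mapsto\langle\cdot,g\rangle$ maps $\dot H^{s-1}_\wedge$ boundedly into $(\dot H^{s+1}_\vee)^*$. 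To see that $J$ is an isometry, for fixed $g$ I would test against $f_0$ with $\widehat{f_0} := (|n|^{2s}/w_+(n))\,\widehat g(n)$; then $\|f_0\|_{\dot H^{s+1}_\vee}^2 = \int (|n|^{4s}/w_+)\,|\widehat g|^2 = \int w_-\,|\widehat g|^2 = \|g\|_{\dot H^{s-1}_\wedge}^2$ and $\langle f_0,g\rangle = \int(|n|^{4s}/w_+)\,|\widehat g|^2 = \|g\|_{\dot H^{s-1}_\wedge}^2$, which forces $\|Jg\|=\|g\|_{\dot H^{s-1}_\wedge}$ (in particular $J$ is injective). Surjectivity then follows from the Riesz representation theorem on the Hilbert space $\dot H^{s+1}_\vee$: any $\Lambda\in(\dot H^{s+1}_\vee)^*$ has the form $\Lambda(f)=\int_{\R^d} w_+(n)\,\widehat f(n)\,\overline{\widehat h(n)}\,\dd n$ for some $h\in\dot H^{s+1}_\vee$, and putting $\widehat g := (w_+/|n|^{2s})\,\widehat h$ one obtains $g\in\dot H^{s-1}_\wedge$ (since $\int w_-\,|\widehat g|^2=\int w_+\,|\widehat h|^2<\infty$) with $Jg=\Lambda$. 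Hence $J$ is the asserted (isometric) isomorphism, which also gives a second proof that $\dot H^{s-1}_\wedge$ is complete.

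I do not anticipate a serious obstacle; the argument is essentially bookkeeping with the weights $w_\pm$. The two steps requiring care are the low-frequency estimate near the origin, where $s<d/2$ is precisely the borderline condition guaranteeing $L^1_{\loc}$ membership (hence completeness), and the verification of $w_+w_-=|n|^{4s}$ in both frequency regimes, on which the entire duality construction hinges.
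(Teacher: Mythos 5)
Your proposal is correct and follows essentially the same route as the paper: your weight identity $w_+(n)\,w_-(n)=|n|^{4s}$ and the multiplication map $\widehat g\mapsto (w_+/|n|^{2s})\,\widehat g$ are exactly the paper's isometric multiplier operator $\cT$ with symbol $1\vee|n|^2$ (and its inverse), combined with the Riesz representation theorem on $\dot H^{s+1}_\vee$. Your completeness argument via $L^1_{\loc}$ integrability of $|n|^{-2s}$ near the origin for $s<d/2$ is the same argument the paper invokes by citing \cite[Proposition 1.34]{Bahouri2011}, just written out.
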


We denote with $\dot H^{s+1}_{\vee, \div}$ and $\dot H^{s-1}_{\wedge, \div}$ the corresponding closed subspaces of tempered distributions satisfying a divergence-free condition. \cref{lem:topological_dual} can be extended to such divergence-free spaces without remarkable differences.

With the aim of using the setting in \cite{LiuRockner2010} and prove \cref{prop:wellposed_viscous}, we introduce the triplet $V\hookrightarrow H\hookrightarrow V^*$, where
\begin{align*}
	H=\dot{H}^{-s}_{\div},\quad\quad V=\dot H^{-s+1}_{\vee, \div}, \quad\quad V^*=\dot H^{-s-1}_{\wedge, \div}.
\end{align*}

We also introduce the operator $A^\nu : V\to V^*$ defined by
\begin{equation}\label{eq:operatorA}
		A^\nu v= \left( \nu +\frac{c_0}{2} \right) \Delta v, 
\end{equation}
and rewrite the viscous approximations \eqref{eq:SPDE_viscous} as
\begin{equation*}
	d\mf^\nu + B[\mf^\nu] \dd W^\cK_t = A^\nu \mf^\nu \dd t.
\end{equation*}
We list here some properties satisfied by $A^\nu$, whose proof is immediate:
\begin{subequations}
	\begin{align}
		&\|A^\nu v \|_{V^*} \lesssim  \| v\|_V, \label{eq:a} \\
		& _{V^*}\langle Av , v\rangle_V = - \left( \nu + \frac{c_0}2\right) \| v \|^2_{\dot H^{-s+1}},\label{eq:b}\\
		&| _{V^*}\langle Av , w\rangle_V | \lesssim \|v\|_V \|w\|_V.\label{eq:c}
	\end{align}
\end{subequations}
\begin{proof} [Proof of \cref{prop:wellposed_viscous}]
	The proof is an application of the classical theory for SPDEs with monotone coefficients \cite{KrylovRozovskii1979}. For convenience we refer to it through \cite[Theorem 4.1]{LiuRockner2010} (we will use the same notation in \cite{LiuRockner2010} except that we use $a$ in place of $\alpha$). The hemicontinuity property (\cite[eq.\ (A1)]{LiuRockner2010}) follows from \eqref{eq:c}. The monotonicity  (\cite[eq.\ (A2)]{LiuRockner2010}) is a consequence of \eqref{eq:b} and the main a priori bound \eqref{eq:d}: indeed, for every $v \in V$ it holds
    \begin{align*}
        2 _{V^*}\langle Av &, v\rangle_V -\|B[v]\|^2_{\HS(H^{d/2+\alpha}_{\div};\dot H^s_{\div})} \\
        &= -(2\nu + c_0) \|v\|_{\dot{H}^{-s+1}}^2 -\|B[v]\|^2_{\HS(H^{d/2+\alpha}_{\div};\dot H^s_{\div})} \\
        &\le -2\nu \|v\|_{\dot{H}^{-s+1}}^2 +\rho_{d,s,\alpha}\|v\|_{\dot{H}^{-s}}^2 \le \rho_{d,s,\alpha}\|v\|_{H}^2.
    \end{align*}
    The coercivity (\cite[eq.\ (A3)]{LiuRockner2010}) holds with $a=2$ and $\theta \in (0,2\nu]$, relying again on \eqref{eq:b}, \eqref{eq:d} and noting that
	\begin{align*}
            -2\nu \|v\|^2_{\dot H^{-s+1}} + \theta \|v\|^2_V 
		&\le -2\nu \|v\|^2_{\dot H^{-s+1}} + \theta \|v\|^2_{\dot H^{-s+1}}
		+ \theta \|v\|^2_{\dot H^{-s}}\\
		&\le  \theta \|v\|^2_{\dot H^{-s}}
		=  \theta \|v\|^2_H.
	\end{align*}
	Lastly, the growth property (\cite[eq.\ (A4)]{LiuRockner2010}) with $a=2$ follows from \eqref{eq:a}.
\end{proof}

\begin{remark}
    In the proof of \cref{prop:wellposed_viscous}, we have used the main a priori bound \eqref{eq:d} (which we will prove independently in the next section). We expect that this is not essential: for fixed $\nu>0$, a finer estimate should allow to control $\sum_k \|B_k\mf\|_{\dot{H}^{-s}}^2$ with the $\dot{H}^{-s+1}$ norm of $\mf$. In particular, the well-posedness of the viscous equation \eqref{eq:SPDE_viscous} could take place in a wider range of $s$ and $\alpha$. Note also that using \cref{lem:HS_homogeneous} in place of the main bound we get well-posedness of \eqref{eq:SPDE_viscous} for sufficiently large $\nu>0$, for any given $s\in (0,d/2)$ and $\alpha\in (0,1)$.
\end{remark}

%%%%%%%%%%%%%%%%%%%%%%%%%%%%%%%%%%%%%%%%%%%%%%%%%%%%%%%%%%%%%%%%%
\section{Regularization Estimate}\label{sec:Main_Bound}

\begin{proposition}\label{prop:main}
    Let $d$, $s$ and $\alpha$ satisfy \cref{hp:main_hp}. There exist constants $\eta_{d,s,\alpha}>0$, $\rho_{d,s,\alpha}\ge 0$, such that for every $\mf\in \dot H^{-s+1}_{\vee, \div}$,
    \begin{multline}\label{eq:d}
        \|B[v]\|^2_{\HS(H^{d/2+\alpha}_{\div};\dot H^{-s}_{\div})} - c_0 \|\mf \|^2_{\dot H^{-s+1}_{\div}} \\
        \le - \eta_{d,s,\alpha} \|\mf\|^2_{\dot H^{-s-\alpha+1}_{\div}} + \rho_{d,s,\alpha}\|\mf\|^2_{\dot H^{-s}_{\div}}.
    \end{multline}
\end{proposition}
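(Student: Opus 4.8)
The plan is to reduce \eqref{eq:d} to a one-variable, pointwise inequality in Fourier space, and then to establish that inequality by evaluating the matrix symbol $\mathbb F_{d,s,\alpha}$ of \eqref{eq:F_def}. By the discussion following \cref{hp:main_hp}, that hypothesis entails $s\in(1,d/2)$, hence $s\in(1,d/2+1)$; moreover $\dot H^{-s+1}_{\vee,\div}$ embeds continuously into each of $H^{-s}_{\div}$, $\dot H^{-s}_{\div}$, $\dot H^{-s+1}_{\div}$ and $\dot H^{-s+1-\alpha}_{\div}$ (compare Fourier weights, using $s>1$ and $\alpha<1$), so all norms in \eqref{eq:d} make sense and \cref{lem:HS_homogeneous} applies to $\mf$, giving
\[
\|B[\mf]\|^2_{\HS(H^{d/2+\alpha}_{\div};\dot H^{-s}_{\div})}=(2\pi)^{-d/2}\int_{\R^d}\mathbb F_{d,s,\alpha}(n)\,\widehat\mf(n)\cdot\overline{\widehat\mf}(n)\dd n .
\]
Since $Q$ has isotropic spectrum, $\mathbb F_{d,s,\alpha}$ is $O(d)$-covariant, so there is a scalar function $a_{d,s,\alpha}$ on $(0,\infty)$ with $\re\big(\mathbb F_{d,s,\alpha}(n)w\cdot\overline w\big)=a_{d,s,\alpha}(|n|)\,|w|^2$ for every $w\in\C^d$ with $n\cdot w=0$. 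As $\widehat\mf(n)\perp n$ by the divergence-free condition and $\|B[\mf]\|^2_{\HS}$ is real, this yields $\|B[\mf]\|^2_{\HS(H^{d/2+\alpha}_{\div};\dot H^{-s}_{\div})}=(2\pi)^{-d/2}\int_{\R^d}a_{d,s,\alpha}(|n|)\,|\widehat\mf(n)|^2\dd n$; since $\|\mf\|^2_{\dot H^\sigma}=\int|n|^{2\sigma}|\widehat\mf(n)|^2\dd n$, the bound \eqref{eq:d} will follow, upon integration against $|\widehat\mf(n)|^2\dd n$, from the pointwise estimate
\begin{equation}\label{eq:prop41-pointwise}
(2\pi)^{-d/2}a_{d,s,\alpha}(r)-c_0\,r^{2-2s}\le-\eta_{d,s,\alpha}\,r^{2-2\alpha-2s}+\rho_{d,s,\alpha}\,r^{-2s},\qquad r>0 .
\end{equation}

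The crux is therefore to compute $a_{d,s,\alpha}(r)=\mathbb F_{d,s,\alpha}(re_1)e_2\cdot e_2$. Inserting the unit vector $e_2\perp e_1$ into \eqref{eq:F_def}, averaging over unit vectors orthogonal to $e_1$ (which leaves the value unchanged) and substituting $p=re_1-k$, a short computation shows that the integrand collapses dramatically: with $p_1=p\cdot e_1$,
\[
a_{d,s,\alpha}(r)=\int_{\R^d}\langle p\rangle^{-d-2\alpha}\big(r^2-2rp_1+|p|^2\big)^{-s}\Big(1-\frac{p_1^2}{|p|^2}\Big)\Big(r^2+|p|^2-\frac{2rp_1}{d-1}\Big)\dd p .
\]
In spherical coordinates $p=\rho\omega$ the angular integration reduces to the classical one-dimensional integrals $\int_{-1}^1(A-Bt)^{-\sigma}(1-t^2)^\mu\dd t$ and $\int_{-1}^1(A-Bt)^{-\sigma}(1-t^2)^\mu t\dd t$ with $A=r^2+\rho^2$, $B=2r\rho$ (so that $A-B=(r-\rho)^2$, $A+B=(r+\rho)^2$), i.e.\ to Gauss hypergeometric functions. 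The remaining radial integral $\int_0^\infty\langle\rho\rangle^{-d-2\alpha}\rho^{d-1}(\cdots)\dd\rho$ I would then evaluate following the complex-analytic route of \cite{galeati2024anomalous}: an integral representation of $\langle\rho\rangle^{-d-2\alpha}$, Beta and Gamma function identities, contour integration, and analytic continuation in the parameters $s$ and $\alpha$.

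The outcome I expect is that $a_{d,s,\alpha}$ is finite and continuous on $(0,\infty)$, has a finite limit as $r\to0^+$, and satisfies, as $r\to+\infty$,
\[
(2\pi)^{-d/2}a_{d,s,\alpha}(r)=c_0\,r^{2-2s}-\eta_{d,s,\alpha}\,r^{2-2\alpha-2s}+O\big(r^{-2s}\big),
\]
with leading coefficient exactly $c_0$ — this is the It\^o--Stratonovich cancellation responsible for the subtraction in \eqref{eq:d} — and with $\eta_{d,s,\alpha}$ a positive constant matching, up to the normalization $(2\pi)^{-d/2}$, the constant $\tilde\eta_{d,s,\alpha}$ of \cref{sec:Heuristic_Derivation}, which is strictly positive precisely under \cref{hp:main_hp} by the analysis in \cref{subsec:parameters}. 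Granting this, \eqref{eq:prop41-pointwise} follows: the function $r\mapsto(2\pi)^{-d/2}r^{2s}a_{d,s,\alpha}(r)-c_0r^2+\eta_{d,s,\alpha}r^{2-2\alpha}$ is continuous on $(0,\infty)$, vanishes as $r\to0^+$ (since $s>0$ and $a_{d,s,\alpha}$ is bounded near the origin) and tends to a finite limit as $r\to+\infty$ by the displayed expansion, hence it is bounded from above; taking $\rho_{d,s,\alpha}$ to be the nonnegative part of its supremum, dividing by $r^{-2s}$ and integrating against $|\widehat\mf(n)|^2\dd n$ yields \eqref{eq:d}.

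The main obstacle is the evaluation of \eqref{eq:F_def} in the second step, and in particular the verification that the leading coefficient of $a_{d,s,\alpha}$ equals precisely $c_0$ and the subleading one the sharp constant $\eta_{d,s,\alpha}$. The delicate point is that one cannot replace $\langle\rho\rangle$ by $|\rho|$ in the radial integral — this is exactly the homogeneous approximation \eqref{eq:Q_approx}, and it would make the integral (and $c_0$) diverge — so the full non-homogeneous kernel must be carried through the hypergeometric bookkeeping, and the cancellation against $c_0\|\mf\|^2_{\dot H^{-s+1}_{\div}}$ has to be extracted from the exact expression rather than from a power-counting heuristic.
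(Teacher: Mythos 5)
Your reduction is the same as the paper's: by \cref{lem:HS_homogeneous}, isotropy of the symbol and the divergence-free condition $n\cdot\widehat{\mf}(n)=0$, the claim \eqref{eq:d} follows from a pointwise bound on $v\cdot\mathbb F_{d,s,\alpha}(n)v-(2\pi)^{d/2}c_0|n|^{2-2s}$ for unit $v\perp n$, and your collapsed formula for $a_{d,s,\alpha}(r)$ after the substitution $p=re_1-k$ and averaging over $v\perp e_1$ is a correct identity (it is the paper's $\cI_{tra}+(d-1)\cI_{str}-2\cI_{mix}$ before the $c_0$ subtraction). The concluding continuity/boundedness argument that converts the large-$r$ expansion into the pointwise inequality and then into \eqref{eq:d} is also fine, provided the $O(r^{-2s})$ remainder is uniform in $r$.

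The genuine gap is that the heart of the proposition is exactly the asymptotic expansion you only announce as ``the outcome I expect'': that the coefficient of $r^{2-2s}$ is precisely $c_0$, that the coefficient of $r^{2-2s-2\alpha}$ is $-\eta_{d,s,\alpha}$ with a uniform $O(r^{-2s})$ error, and that $\eta_{d,s,\alpha}>0$ exactly under \cref{hp:main_hp}. None of this is established in your argument; you defer it to the complex-analytic route of \cite{galeati2024anomalous} and, for the sign of $\eta_{d,s,\alpha}$, to the self-similar heuristic of \cref{sec:Heuristic_Derivation}, which the paper itself flags as non-rigorous, so positivity cannot be borrowed from there. This is where all of \cref{sec:Main_Bound} lives: the paper first makes the $c_0$-cancellation structural, using $P^\perp_{n-k}k=P^\perp_{n-k}n$ and \eqref{eq:Q0} to fold $c_0|n|^{2-2s}$ into the transport integral as the difference $|k|^{-2s}-|n|^{-2s}$, so that $\cI_{tra}$ becomes the passive-scalar quantity whose expansion is quoted from \cite{galeati2024anomalous}; it then computes $\cI_{str}$ and $\cI_{mix}$ explicitly via Mellin transforms (\cref{lem:integral}), residues and vertical-line estimates, getting $c_{str}=\alpha(d+2\alpha)C_{d,s,\alpha}$, $c_{mix}=2\alpha(s+\alpha-1)C_{d,s,\alpha}$ and hence the exact $\eta_{d,s,\alpha}$ of \cref{thm:main_bd_integral}. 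In your organization this bookkeeping cannot be replaced by power counting: for instance, the naive $r\to\infty$ expansion of the stretching part would produce at order $r^{-2s}$ the integral $\int_{\R^d}\langle p\rangle^{-d-2\alpha}\bigl(1-p_1^2/|p|^2\bigr)|p|^2\dd p$, which diverges for $\alpha<1$, so both the $r^{2-2s-2\alpha}$ term and the uniform remainder can only be extracted by the Mellin/contour analysis you have not carried out. As it stands, the proposal is a correct reduction plus a plan, with the main estimate missing.
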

\noindent
By \cref{lem:HS_homogeneous}, for every $M\in \dot H^{-s+1}_{\vee, \div}$ the left-hand side of \eqref{eq:d} can be written as:
\begin{multline}\label{eq:identity_H}
\|B[\mf]\|^2_{\HS(H^{d/2+\alpha}_{\div};\dot H^{-s}_{\div})} -c_0 \|\mf\|^2_{\dot H^{-s+1}_{\div}}\\
= (2\pi)^{-d/2} \int_{\R^d}  \mathbb H_{d,s,\alpha} (n) \widehat{\mf} (n) \cdot \overline{\widehat{\mf}} (n) \dd n,
\end{multline}
with (recall the definition of $F_{d,s,\alpha}$ in \eqref{eq:F_def})
\begin{align}
\begin{aligned}\label{eq:H_def}
    \mathbb H_{d,s,\alpha} (n) &= \mathbb F_{d,s,\alpha} (n) -(2\pi)^{d/2}c_0|n|^{-2s+2}I \\
    &= \int_{\R^d} \langle n-k \rangle^{-d-2\alpha} (|k|^{-2s}-|n|^{-2s}) |P^\perp_{n-k}k|^2 I \dd k \\
    &\quad +(d-1)\int_{\R^d} \langle n-k \rangle^{-d-2\alpha} |k|^{-2s} kk^\top \dd k \\
    &\quad -2\int_{\R^d} \langle n-k \rangle^{-d-2\alpha} |k|^{-2s} P^\perp_{n-k} kk^\top \dd k,
\end{aligned}
\end{align}
where we have used the expression \eqref{eq:Q0} for $c_0$ together with $P^\perp_{n-k}k=P^\perp_{n-k}n$. Moreover, for every divergence-free $M\in \dot{H}^{-s}$ we have $n\cdot \hat{M}(n)=0$ for almost every $n$. Therefore \cref{prop:main} follows from the following estimate for the matrix-valued integral $\mathbb H_{d,s,\alpha} (n)$:

\begin{theorem}\label{thm:main_bd_integral}
    Let $d\ge 2$, $s\in (0,d/2)$ and $\alpha\in (0,1)$ with $s+\alpha>1$. There exist $\eta_{d,s,\alpha}\in \R$, $\rho_{d,s,\alpha}\ge 0$ such that the matrix-valued integral $\mathbb H_{d,s,\alpha} (n)$ defined in \eqref{eq:H_def} satisfies for every $n\in\R^d\setminus\{0\}$, for all $v\in \mathbb{S}^{d-1}$ with $v\cdot n=0$,
    \begin{equation}\label{eq:main_bd_integral}
        |v \cdot \mathbb H_{d,s,\alpha} (n) v +\eta_{d,s,\alpha}|n|^{-2s+2-2\alpha}| \le \rho_{d,s,\alpha}|n|^{-2s}.
    \end{equation}
    The constant $\eta_{d,s,\alpha}$ takes the form, for some $C_{d,s,\alpha}>0$,
    % \begin{equation*}
    %     \eta_{d,s,\alpha} = C_{d,s,\alpha} \left(  (d-1)(s+\alpha-1)(d+2-2s-2\alpha) -\alpha(d-1)(d+2\alpha) +4\alpha (s+\alpha-1) \right)
    % \end{equation*}
    \begin{multline*}
        \eta_{d,s,\alpha} = C_{d,s,\alpha} \Big(  (d-1)(s+\alpha-1)(d+2-2s-2\alpha) -\alpha(d-1)(d+2\alpha) +4\alpha (s+\alpha-1) \Big)
    \end{multline*}
    hence $\eta_{d,s,\alpha}$ is strictly positive if and only if \cref{hp:main_hp} on $d$, $s$ and $\alpha$ is met.
\end{theorem}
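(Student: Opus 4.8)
The plan is to reduce the matrix inequality \eqref{eq:main_bd_integral} to a one‑variable statement and then to separate a scale‑invariant leading term from a genuinely lower‑order remainder. By the substitution $k\mapsto Ok$ for an orthogonal matrix $O$ one checks $\mathbb H_{d,s,\alpha}(On)=O\,\mathbb H_{d,s,\alpha}(n)\,O^\top$, so for a unit vector $v$ with $v\cdot n=0$ the scalar $\Phi(n):=v\cdot\mathbb H_{d,s,\alpha}(n)v$ depends only on $|n|$; write $\Phi(n)=\phi(|n|)$. First I would record absolute convergence of each of the three integrals in \eqref{eq:H_def} once contracted against such $v$ (using $v\cdot k=v\cdot(k-n)=O(|k-n|)$ to reabsorb the singularities at $k=n$): near $k=0$ this uses $2s<d$; near $k=n$ it uses $\alpha<1$, the borderline case benefitting from a parity cancellation of the leading singular term over spheres centred at $n$; at infinity the $kk^\top$ and $P^\perp_{n-k}kk^\top$ terms are integrable precisely because $s+\alpha>1$. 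This also makes $\phi$ continuous, so \eqref{eq:main_bd_integral} is trivial on compact subsets of $(0,\infty)$, and the content lies in the regimes $|n|\to0$ and $|n|\to\infty$.

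Next I would split the kernel as $\langle n-k\rangle^{-d-2\alpha}=|n-k|^{-d-2\alpha}-R_\alpha(n-k)$, with $R_\alpha(h)=|h|^{-d-2\alpha}\bigl(1-(|h|^2/(1+|h|^2))^{(d+2\alpha)/2}\bigr)\ge0$ satisfying $0\le R_\alpha(h)\le|h|^{-d-2\alpha}$ and $R_\alpha(h)\lesssim|h|^{-d-2\alpha-2}$ for $|h|\ge1$. This induces a decomposition $\mathbb H_{d,s,\alpha}=\mathbb H^{\mathrm{hom}}_{d,s,\alpha}+\mathbb H^{\mathrm{rem}}_{d,s,\alpha}$, the first built from the homogeneous Riesz kernel $|n-k|^{-d-2\alpha}$. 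By the exact scaling $k\mapsto|n|k$ one gets $v\cdot\mathbb H^{\mathrm{hom}}_{d,s,\alpha}(n)v=|n|^{-2s+2-2\alpha}\bigl(v\cdot\mathbb H^{\mathrm{hom}}_{d,s,\alpha}(e_1)v\bigr)=:-\eta_{d,s,\alpha}\,|n|^{-2s+2-2\alpha}$, so \eqref{eq:main_bd_integral} becomes equivalent to the two tasks: (i) evaluate $\eta_{d,s,\alpha}$ in closed form and (ii) show $|v\cdot\mathbb H^{\mathrm{rem}}_{d,s,\alpha}(n)v|\le\rho_{d,s,\alpha}|n|^{-2s}$. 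For (i), by isotropy each matrix integral $\int|n-k|^{-d-2\alpha}|k|^{-2s}(\cdot)\dd k$ decomposes along $P_L(n)$, $P_N(n)$ exactly as in \eqref{eq:D2G_decomposition_sss}--\eqref{eq:QDG_3}, and $\int f(|k|,|n-k|)\,kk^\top\dd k$ is a combination $A(n)I+B(n)nn^\top$ whose coefficients are scalar integrals; all of these reduce to the classical composition identity $\int_{\R^d}|k|^{-2a}|n-k|^{-2b}\dd k=\kappa_{a,b,d}|n|^{d-2a-2b}$ (and its $n$‑derivatives), with $\kappa_{a,b,d}$ explicit in Gamma functions. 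This reproduces verbatim the self‑similar computation of \cref{sec:Heuristic_Derivation}, now fully rigorous since its only formal step, $\langle D^2G_{s+\alpha}\ast\mf,\mf\rangle=0$, is nothing but $n\cdot\widehat{\mf}(n)=0$; it yields the stated expression for $\eta_{d,s,\alpha}$ with a positive prefactor $C_{d,s,\alpha}$.

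It then remains to carry out (ii), $|v\cdot\mathbb H^{\mathrm{rem}}_{d,s,\alpha}(n)v|\lesssim|n|^{-2s}$ uniformly in $n\ne0$. I would partition $\R^d$ into $\{|k|\le|n|/2\}$, $\{|k-n|\le|n|/2\}$, $\{|k|\ge2|n|\}$, and the remaining region where $|k|$, $|k-n|$, $|n|$ are all comparable. On the last region crude bounds suffice; on $\{|k|\ge2|n|\}$ the improved decay $R_\alpha(n-k)\lesssim|k|^{-d-2\alpha-2}$ (for $|n|\ge1$) gives an $O(|n|^{-2s-2\alpha})$ bound, while for $|n|\le1$ the region contributes $O(1)\le|n|^{-2s}$; on $\{|k|\le|n|/2\}$, using $R_\alpha(n-k)\le\langle n-k\rangle^{-d-2\alpha}$ and $2s<d$ one gets $O(|n|^{-2s})$. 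The delicate piece is $\{|k-n|\le|n|/2\}$, in particular its core $\{|k-n|\le\min(|n|,1)\}$, where $R_\alpha(n-k)\approx|n-k|^{-d-2\alpha}$ offers no decay: taking absolute values only yields $O(|n|^{-2s+1})$, and the gain to $O(|n|^{-2s})$ comes from Taylor expanding $|k|^{-2s}$ about $k=n$, whose first‑order term integrates to zero against the angular measure by parity (integrals of the type $\int_{\mathbb{S}^{d-1}}(\omega\cdot n)\,\sigma(\omega)\dd\omega$ with $\sigma$ even vanish), together with the $O(|k-n|^2)$ vanishing of $(v\cdot k)^2$; after this cancellation the remaining integrand is $O(|k-n|^{-d-2\alpha+2})$, integrable thanks to $\alpha<1$, contributing $O(|n|^{-2s})$. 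The main obstacle is exactly this last step: one must track the parity cancellations and exploit the constraint $v\perp n$ instead of bounding in absolute value, and do so uniformly across all scales of $|n|$. Finally, the sign of $\eta_{d,s,\alpha}$ is read off its closed form: as in \cref{subsec:parameters} it is, up to the positive factor $C_{d,s,\alpha}$, a quadratic with negative leading coefficient in each of $s$ and $\alpha$, so $\eta_{d,s,\alpha}>0$ precisely when $(s,\alpha)$ lies inside the ellipse, i.e.\ when \cref{hp:main_hp} holds.
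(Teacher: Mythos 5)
Your route is genuinely different from the paper's: the paper reduces to $n=|n|e_1$, $v=e_2$, passes to polar coordinates and extracts the large-$|n|$ expansion of $\cI_{tra},\cI_{str},\cI_{mix}$ by the Mellin--Parseval formula and residue calculus, whereas you split $\langle n-k\rangle^{-d-2\alpha}=|n-k|^{-d-2\alpha}-R_\alpha(n-k)$, let the exactly homogeneous part produce $\eta_{d,s,\alpha}$ by pure scaling, and estimate the remainder by hand. In outline this is viable (and, since $-2s+2-2\alpha>-2s$, any constant obtained this way must coincide with the paper's), but two steps fail as written. The main one is the closed-form evaluation of $\eta_{d,s,\alpha}$, which is the heart of the theorem: the composition identity $\int|k|^{-2a}|n-k|^{-2b}\dd k=\kappa_{a,b,d}|n|^{d-2a-2b}$ requires $2a,2b\in(0,d)$, while your kernel exponent is $2b=d+2\alpha>d$, so the scalar auxiliary integrals created by the $I$/$nn^\top$ (equivalently $P_L/P_N$) reduction — e.g.\ the trace $\int|n-k|^{-d-2\alpha}|k|^{-2s+2}\dd k$ and the $nn^\top$-contraction — diverge at $k=n$. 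Only the specific combinations in which $v\cdot k=v\cdot(k-n)$ and $P^\perp_{n-k}k=P^\perp_{n-k}n$ provide vanishing at $k=n$ converge, so you must regroup before decomposing or invoke the analytically continued (distributional) Riesz composition formula; that is essentially the work the paper's Mellin/residue computation does, and it is also why the claim that the heuristic of \cref{sec:Heuristic_Derivation} becomes ``fully rigorous'' modulo $n\cdot\widehat{\mf}(n)=0$ oversells the situation. Relatedly, for $\alpha\ge 1/2$ the transport and mixed parts of $\mathbb H^{\mathrm{hom}}$ and $\mathbb H^{\mathrm{rem}}$ are individually only principal-value convergent at $k=n$ (the gain there is only $O(|k-n|)$), so the very decomposition must be set up with a PV convention or a first-order Taylor subtraction, not merely assisted by a parity remark inside the estimates.

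The second gap is in the remainder bound. On $\{|k|\le|n|/2\}$ the justification you give is insufficient: with only $R_\alpha\le\langle n-k\rangle^{-d-2\alpha}\approx|n|^{-d-2\alpha}$, the stretching piece alone gives $|n|^{-d-2\alpha}\int_{|k|\le|n|/2}|k|^{-2s+2}\dd k\approx|n|^{2-2s-2\alpha}$, i.e.\ exactly the size of the leading term rather than $O(|n|^{-2s})$; you need the improved decay $R_\alpha(n-k)\lesssim|n-k|^{-d-2\alpha-2}\approx|n|^{-d-2\alpha-2}$ on that region (available since $|n-k|\ge|n|/2$), which you introduced but did not use there. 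Likewise, on the annulus $1\le|k-n|\le|n|/2$ the improved decay alone still leaves the transport and mixed contributions at $O(|n|^{-2s+1})$, so the Taylor-plus-parity cancellation must be run on the whole symmetric region $\{|k-n|\le|n|/2\}$, not only on its core. With these repairs — careful regrouping (or analytic continuation) for the homogeneous constant, and the corrected region estimates — your scheme does yield \eqref{eq:main_bd_integral} with the stated $\eta_{d,s,\alpha}$, and the sign discussion is then exactly the one in \cref{subsec:parameters}, equivalent to \cref{hp:main_hp}.
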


The rest of this section is devoted to the proof of \cref{thm:main_bd_integral}. For $|n|\le 1$, by the proof of \cref{lem:HS_homogeneous}, we have $|\mathbb F_{d,s,\alpha}(n)|\lesssim 1$, so $|\mathbb H_{d,s,\alpha}(n)|\lesssim |n|^{-2s+2}$ and \eqref{eq:main_bd_integral} follows easily. Hence we focus on the case $|n|>1$.

For $v\in\mathbb{S}^{d-1}$, we write $v \cdot \mathbb H_{d,s,\alpha} (n) v$ as
\begin{align}
    v \cdot \mathbb H_{d,s,\alpha} (n) v
    &= \int_{\R^d} \langle n-k \rangle^{-d-2\alpha} (|k|^{-2s}-|n|^{-2s}) |P^\perp_{n-k}k|^2 \dd k \nonumber\\
    &\quad +(d-1)\int_{\R^d} \langle n-k \rangle^{-d-2\alpha} |k|^{-2s} (v\cdot k)^2 \dd k \nonumber\\
    &\quad -2\int_{\R^d} \langle n-k \rangle^{-d-2\alpha} |k|^{-2s} (v\cdot P^\perp_{n-k} k)(v\cdot k) \dd k \nonumber\\
    &=: \cI_{tra}(n) +(d-1)\cI_{str}(n,v) -2\cI_{mix}(n,v).\label{eq:three_terms}
\end{align}
The integrals $\cI_{str}(n,v)$ and $\cI_{mix}(n,v)$ come respectively from the stretching term and the combination of the transport and stretching terms in the It\^o formula.
Indeed, the term $\cI_{tra}(n)$ due to the transport term $(u^\cK\cdot \nabla)M$ is exactly the same as in the passive scalar case. 
We will deduce estimates for $\cI_{str}(n,v),\cI_{mix}(n,v)$ applying the strategy of \cite[Section 4]{galeati2024anomalous}, from which we recall:

\begin{proposition}
    Take $s\in (0,d/2)$ and $\alpha\in (0,1)$. Then we have, as $|n|\to\infty$,
\begin{align}\label{eq:bd_It}
    \cI_{tra}(n) = c_{tra}|n|^{-2s+2-2\alpha}  +O(|n|^{-2s}),
\end{align}
where\footnote{The constant $c_{tra}$ comes exactly from \cite[Proposition 4.2]{galeati2024anomalous}, as one can check using $\Gamma(z+1)=z\Gamma(z)$; mind that, unlike in \cite{galeati2024anomalous}, here the integral $\cI_{tra}(n)$ is not multiplied by $(2\pi)^{-d/2}$.}
\begin{align*}
    c_{tra}= -(d-1)(s+\alpha-1)(d+2-2s-2\alpha)C_{d,s,\alpha}
\end{align*}
and
\begin{align}\label{eq:constant_in_bd}
    C_{d,s,\alpha} = -\frac{\pi^{d/2}\Gamma(s+\alpha-1)\gamhalf{d-2s+2}\Gamma(-\alpha)}{4\Gamma(s)\gamhalf{d+2+2\alpha}\gamhalf{d+4-2s-2\alpha}}>0.
\end{align}
\end{proposition}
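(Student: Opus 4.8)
The plan is to identify $\cI_{tra}(n)$ with the integral appearing in the passive-scalar analysis of \cite[Proposition~4.2]{galeati2024anomalous} and to import the asymptotic expansion from there. The first, and essentially only conceptual, step is the algebraic identity $P^\perp_{n-k}k = P^\perp_{n-k}n$, valid because the two vectors differ by $P^\perp_{n-k}(n-k)=0$; consequently
\[
    \cI_{tra}(n) = \int_{\R^d}\langle n-k\rangle^{-d-2\alpha}\,(|k|^{-2s}-|n|^{-2s})\Big(|n|^2 - \tfrac{|(n-k)\cdot n|^2}{|n-k|^2}\Big)\dd k
\]
depends on $k$ only through $|k|$, $|n-k|$ and $(n-k)\cdot n$, so $\cI_{tra}$ is rotation-invariant and coincides, up to the normalization factor $(2\pi)^{d/2}$, with the scalar object studied in \cite{galeati2024anomalous}; the divergence-free/stretching structure plays no role here, which is exactly why the transport contribution is unchanged from the passive-scalar case. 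Granting \cite[Proposition~4.2]{galeati2024anomalous}, the stated expansion follows, and the displayed form of $c_{tra}$ is recovered from the one there by repeated use of $\Gamma(z+1)=z\Gamma(z)$; positivity of the prefactor $C_{d,s,\alpha}$ in \eqref{eq:constant_in_bd} is immediate from $\Gamma(-\alpha)<0$ for $\alpha\in(0,1)$ together with $\Gamma(s),\gamhalf{d-2s+2},\gamhalf{d+2+2\alpha},\gamhalf{d+4-2s-2\alpha}>0$.

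For completeness I would also recall the structure of the underlying argument. Substituting $\zeta = n-k$ and writing $|P^\perp_\zeta n|^2 = |n|^2\sin^2\theta(\zeta,n)$ gives
\[
    \cI_{tra}(n) = |n|^2\int_{\R^d}\langle\zeta\rangle^{-d-2\alpha}\,(|n-\zeta|^{-2s}-|n|^{-2s})\,\sin^2\theta(\zeta,n)\,\dd\zeta .
\]
Rescaling $\zeta = |n|\omega$ turns the kernel $\langle\zeta\rangle^{-d-2\alpha}$ into $|n|^{-d-2\alpha}(|n|^{-2}+|\omega|^2)^{-(d+2\alpha)/2}$, a heavy-tailed approximate identity concentrating at $\omega=0$, while the remaining profile vanishes there; the anomalous power $|n|^{-2\alpha}$ (hence $|n|^{-2s+2-2\alpha}$ rather than the naive $|n|^{-2s+2}$) is produced by the contribution of the intermediate annulus $1\ll|\zeta|\ll|n|$, where neither the Taylor expansion of $|n-\zeta|^{-2s}$ around $\zeta=0$ nor the expansion of the kernel around $|n|=\infty$ is legitimate. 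The explicit constant is then extracted by an exact evaluation: the angular integral over $\mathbb{S}^{d-1}$ is a Beta integral, producing the $\Gamma$-factors in $d$, $d-2s+2$ and $d+4-2s-2\alpha$, and the radial integral, after the appropriate regularization/analytic continuation of $\int_0^\infty\rho^\bullet\langle\rho\rangle^{-d-2\alpha}\dd\rho$, is a Beta integral producing the factors $\Gamma(s+\alpha-1)$ and $\Gamma(-\alpha)$; collecting terms yields $c_{tra} = -(d-1)(s+\alpha-1)(d+2-2s-2\alpha)C_{d,s,\alpha}$ with all remainder pieces of order $O(|n|^{-2s})$. For $|n|\le 1$ the bound $|\cI_{tra}(n)|\lesssim|n|^{-2s+2}$ --- hence the claimed expansion trivially --- was already noted via the proof of \cref{lem:HS_homogeneous}, so only $|n|>1$ is at issue.

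The main obstacle is precisely the anomalous scaling just described: since $\alpha\in(0,1)$, the moment $\int\langle\zeta\rangle^{-d-2\alpha}|\zeta|^2\dd\zeta$ diverges, so a naive Taylor expansion would simultaneously miss the true leading order and manufacture a spurious one; the careful bookkeeping needed to justify the annulus contribution, to evaluate the two Beta integrals with the correct normalization, and above all to check that every error term (the Taylor remainders on the near-diagonal region, the far-region tail, and the regularization error in the radial integral) is genuinely $O(|n|^{-2s})$ uniformly in $|n|>1$, with no intermediate anomalous power surviving, is the delicate part. As this is exactly what \cite[Section~4]{galeati2024anomalous} carries out, the economical write-up is to quote it, after verifying the two discrepancies in presentation: the factor $(2\pi)^{d/2}$ and the passage between the two equivalent forms of $c_{tra}$.
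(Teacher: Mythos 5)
Your proposal takes essentially the same route as the paper: the paper does not reprove this proposition but imports it from \cite[Proposition 4.2]{galeati2024anomalous}, observing (as you do via $P^\perp_{n-k}k=P^\perp_{n-k}n$ and rotation invariance) that the transport contribution coincides with the passive-scalar case, and adjusting only for the $(2\pi)^{d/2}$ normalization and the form of $c_{tra}$ through $\Gamma(z+1)=z\Gamma(z)$. Your supplementary sketch of the internal mechanism is fine as a heuristic, though the cited reference (and the paper's own treatment of $\cI_{str}$ and $\cI_{mix}$) extracts the anomalous power and the Beta/Gamma constants via Mellin transforms and a residue computation rather than the annulus decomposition you describe; since the result is quoted, this does not affect correctness.
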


%-------------------------------------
\subsection{Reduction to the scalar case}

An important difference with respect to the transport case in \cite{galeati2024anomalous} is the fact that $\mathbb H_{d,s,\alpha}$ is matrix-valued instead of scalar-valued: $\cI_{str}(n,v)$ and $\cI_{mix}(n,v)$ depend a priori also on $v\in \mathbb S^{d-1}$.

However the isotropy of the covariance matrix $Q$ and of the involved Green kernel ($|n|^{-2s}$ in Fourier) and the condition $n\cdot v=0$ (coming from the divergence-free condition on $M$) imply no dependence on $v \in \mathbb{S}^{d-1}\cap n^\perp$. Indeed, by symmetry of the integrals -- precisely, with the change of variable $k = Rk^\prime$ for $R$ orthogonal $d\times d$ matrix with $n = |n|Re_1$, $v=Re_2$ -- we have, for every $v\in \mathbb{S}^{d-1}$ orthogonal to $n$,
\begin{align*}
    \cI_{str}(n,v) &= \cI_{str}(|n|e_1,e_2)=:\cI_{str}(|n|),\\
    \cI_{mix}(n,v) &= \cI_{mix}(|n|e_1,e_2)=:\cI_{mix}(|n|),
\end{align*}
hence reducing to the case $n=|n|e_1$, $v=e_2$. In the following we call $\lambda=|n|$.

%-------------------------------------
\subsection{Outline of the argument}

We move to polar coordinates and write the integrals $\cI_{str}$ and $\cI_{mix}$ in terms of integrals of the form $\int_0^\infty h(\lambda t)f(t) \dd t$ for suitable $f$ and $h$, with $\lambda=|n|$. The Mellin transform of a locally integrable function
\begin{align*}
    M[f,z] = \int_0^\infty t^{z-1} f(t) \dd t,
\end{align*}
defined for $z\in \mathbb{C}$ for which the integral is absolutely convergent (the \emph{fundamental strip}), satisfies the Parseval formula
\begin{align}
\begin{aligned}\label{eq:Mellin_Res}
\int_0^\infty h(\lambda t)f(t) \dd t = \frac{1}{2\pi i} \int_{r-i\infty}^{r+i\infty} \frac{M[h,z]M[f,1-z]}{\lambda^z} \dd z,
\end{aligned}
\end{align}
provided that $r+i\R$ is contained in the intersection of the fundamental strips of $M[h,\cdot]$ and $M[f,1-\cdot]$ and that the integral on the right-hand side is absolutely convergent. By residue theorem we have
\begin{align*}
    \int_0^\infty h(\lambda t)f(t) \dd t
    &= \sum_{r<\re z< r'} \res\set{-\lambda^{-z}M[h,z]M[f,1-z]} \\
    &\quad +\frac1{2\pi i}\int_{r'-i\infty}^{r'+i\infty} \frac{M[h,z]M[f,1-z]}{\lambda^z} dz,
\end{align*}
for $r'>r$ such that $r'+i\R$ does not contain poles of $M[h,\cdot]$ or $M[f,1-\cdot]$, provided that $M[h,\cdot]$ and $M[f,1-\cdot]$ can be analytically continued to meromorphic functions of $\mathbb{C}$. We will take $r$ and $r'$ so that the poles in $r<\re z< r'$ give the leading order terms in the expansion for large $\lambda=|n|$ and the integral over $r'+i\R$ gives lower order contributions.

%-------------------------------------
\subsection{Mellin transforms and residues}

We recall that the Gamma function $\Gamma$ has only simple poles, located on the non-positive integers, and
\begin{align*}
    \res_{z=-n}\set{\Gamma(z)} = \frac{(-1)^n}{n!},\quad n\in\N .
\end{align*}
We also recall the properties of the Gamma functions
\begin{align*}
    \Gamma(z+1)=z\Gamma(z),\quad \Gamma(n)=(n-1)!,\quad \Gamma(1/2)=\sqrt{\pi},
\end{align*}
which we will apply without further mention, and the Beta integral
\begin{equation}\label{eq:beta}
    \int_0^\pi |\sin \theta|^\gamma |\cos \theta|^\eta \dd \theta=
    \frac{\gamhalf{\gamma+1}\gamhalf{\eta+1}}{\gamhalf{\gamma+\eta+2}}.
\end{equation} 
The Beta function will appear in our computations as a Mellin transform:

\begin{lemma}
    The Mellin transform of the function
    \begin{align*}
        h(t) = \frac{1}{(1+t^2)^{a}}, \quad a>0,
    \end{align*}
    is absolutely convergent for $0< \re(z)<a$ and it is given by
    \begin{align}\label{eq:mellinbeta}
        M[h,z] = \frac{\gamhalf{z}\gamhalf{a-z}}{2\gamhalf{a}}.
    \end{align}
    The analytic continuation of $M[h,z]$ is a meromorphic function with only simple poles, and
    \begin{equation*}
        \res_{z=a}\set{M[h,z]}=-1.
    \end{equation*}
\end{lemma}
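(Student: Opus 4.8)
The plan is to identify $M[h,z]$ with (half of) Euler's Beta integral, read off the fundamental strip and the closed form from the Beta--Gamma identity, and then extract the meromorphic continuation and the residue at $z=a$ directly from the resulting product of Gamma functions. First I would substitute $u=t^2$ (so $\dd t=\tfrac12 u^{-1/2}\,\dd u$); since $\abs{t^{z-1}h(t)}=t^{\re z-1}h(t)$, absolute convergence depends only on $\re z$, and the integral becomes $M[h,z]=\tfrac12\int_0^\infty u^{z/2-1}(1+u)^{-a/2}\,\dd u=\tfrac12\,B\!\pa{\tfrac z2,\tfrac{a-z}{2}}$, where $B(p,q)=\int_0^\infty u^{p-1}(1+u)^{-p-q}\,\dd u$. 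This Beta integral converges absolutely precisely when $\re p>0$ and $\re q>0$, i.e.\ when $0<\re z<a$, giving the fundamental strip (the same bounds are visible directly, since the integrand behaves like $t^{\re z-1}$ near $t=0$ and like $t^{\re z-1-a}$ near $t=+\infty$). Euler's identity $B(p,q)=\Gamma(p)\Gamma(q)/\Gamma(p+q)$ with $p+q=a/2$ then gives
\[
M[h,z]=\frac{\gamhalf{z}\gamhalf{a-z}}{2\gamhalf{a}},
\]
which is \eqref{eq:mellinbeta}.

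For the analytic continuation I would simply observe that the right-hand side above is manifestly meromorphic on all of $\C$, being a ratio of products of Gamma functions, with $\gamhalf{a}$ a nonzero constant since $a>0$. Hence the poles of $M[h,z]$ are exactly those of the numerator factors: $\gamhalf{z}$ has simple poles at $z\in\set{0,-2,-4,\dots}$ and $\gamhalf{a-z}$ has simple poles at $z\in\set{a,a+2,a+4,\dots}$; as $a>0$ these two families are disjoint, and at each such point only one of the two Gamma factors is singular, so every pole of $M[h,z]$ is simple. To compute $\res_{z=a}\set{M[h,z]}$ I would isolate the pole of $\gamhalf{a-z}$ via the functional equation $\Gamma\!\pa{\tfrac{a-z}{2}}=\tfrac{2}{a-z}\,\Gamma\!\pa{\tfrac{a-z}{2}+1}$: near $z=a$ one has $\gamhalf{a-z}=\tfrac{-2}{z-a}+O(1)$, while the remaining factor $\gamhalf{z}/(2\gamhalf{a})$ is regular at $z=a$ with value $\tfrac12$ there, so $\res_{z=a}\set{M[h,z]}=\tfrac12\cdot(-2)=-1$. (Equivalently, $\res_{w=0}\set{\Gamma(w)}=1$ and the inner affine change $w=(a-z)/2$ contributes a Jacobian factor $-2$.)

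I do not expect any real obstacle here: this is a standard Mellin-transform computation. The only point deserving attention is the bookkeeping of constants induced by the $u=t^2$ substitution --- in particular that the singular Gamma factor at $z=a$ is $\gamhalf{a-z}=\Gamma((a-z)/2)$, not $\Gamma(a-z)$, so its residue picks up the Jacobian $-2$, which must be combined correctly with the overall $1/(2\gamhalf{a})$ normalization to yield exactly $-1$. This constant-tracking is the one place where a sign or factor slip could occur.
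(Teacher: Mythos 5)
Your overall route---reduce to Euler's Beta integral via $u=t^2$, apply $B(p,q)=\Gamma(p)\Gamma(q)/\Gamma(p+q)$, and read off the meromorphic continuation and the residue from the Gamma factors---is the standard argument (the paper states this lemma without proof, as a recalled fact), and your residue extraction at $z=a$ is done correctly for the formula \eqref{eq:mellinbeta}. But the decisive computational step is wrong as written. For $h(t)=(1+t^2)^{-a}$ the substitution $u=t^2$ gives
\begin{equation*}
\int_0^\infty t^{z-1}(1+t^2)^{-a}\dd t=\frac12\int_0^\infty u^{z/2-1}(1+u)^{-a}\dd u=\frac{\gamhalf{z}\,\Gamma\!\pa{a-\tfrac z2}}{2\,\Gamma(a)},
\end{equation*}
with exponent $-a$, not $-a/2$: the change of variables does not halve the exponent of $(1+u)$. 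Consequently, for the $h$ literally written in the statement the fundamental strip is $0<\re z<2a$ (at infinity the integrand decays like $t^{\re z-1-2a}$, not $t^{\re z-1-a}$ as you claim), and the first pole to the right of the strip sits at $z=2a$ (with residue $-1$ there), not at $z=a$. A sanity check: for $a=2$, $z=1$ one has $\int_0^\infty(1+t^2)^{-2}\dd t=\pi/4$, whereas \eqref{eq:mellinbeta} would give $\Gamma(1/2)^2/(2\Gamma(1))=\pi/2$.

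What your computation actually proves is the statement for $h(t)=\langle t\rangle^{-a}=(1+t^2)^{-a/2}$: then indeed $M[h,z]=\gamhalf{z}\gamhalf{a-z}/\pa{2\gamhalf{a}}$ on $0<\re z<a$, the pole families $\set{0,-2,-4,\dots}$ and $\set{a,a+2,\dots}$ are disjoint (since $a>0$) and simple, and $\res_{z=a}\set{M[h,z]}=-1$, exactly as you argue. This is clearly the intended reading: in the application the paper takes $h(t)=\langle t\rangle^{-d-2\alpha}=(1+t^2)^{-(d+2\alpha)/2}$ and uses the pole at $z=d+2\alpha$ with residue $-1$, i.e.\ $a=d+2\alpha$, so the exponent in the lemma should be understood as $\langle t\rangle^{a}$ rather than $(1+t^2)^a$. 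The fix is trivial---either restate $h$ as $(1+t^2)^{-a/2}$ before computing, or carry the exponent $-a$ through and obtain $\gamhalf{z}\Gamma(a-z/2)/(2\Gamma(a))$ with strip $(0,2a)$ and residue $-1$ at $z=2a$---but as submitted your derivation slips precisely at the constant-bookkeeping point you yourself flagged, and the slip silently compensates for the inconsistency in the statement rather than exposing it.
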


Finally, we recall the following explicit representation of a Mellin transform derived in \cite[Lemma 4.1]{galeati2024anomalous}.

\begin{lemma}\label{lem:integral}
    The Mellin transform of the function
    \begin{align*}
        f_{a,b,s}(t)=t^a \int_0^\pi \frac{(\sin\theta)^b }{(1-2t \cos \theta+t^2)^s} \dd \theta,
        \quad a\in\R,\,b,s>0,
    \end{align*}
    is absolutely convergent for $-a<\re z<2s-a$ and it is given by
    \begin{align}\label{eq:mellinf}
        M[f_{a,b,s},z] =\frac{\sqrt\pi \gamhalf{b-2s+2}\gamhalf{b+1}}{2\Gamma(s)}
        \cdot \frac{\gamhalf{2s-z-a}\gamhalf{z+a}}{\gamhalf{b-a-z+2}\gamhalf{b+a-2s+2+z}}.
    \end{align}
    The analytic continuation of $M[f_{a,b,s},z]$ is a meromorphic function with only simple poles, which are contained in the set
    \begin{align*}
        (2s-a+2\mathbb{N}) \cup (-a-2\mathbb{N})
    \end{align*}
    of zeros of the denominator of \eqref{eq:mellinf}.
\end{lemma}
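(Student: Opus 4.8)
\emph{Overview.} The plan has three stages: first pin down the fundamental strip of $M[f_{a,b,s},\cdot]$; then evaluate the integral on that strip by reducing it, through a law-of-sines change of variables, to a symmetric ``trigonometric Dirichlet'' integral; and finally read off the meromorphic continuation and its poles directly from the resulting ratio of Gamma functions.

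\emph{Fundamental strip.} The only candidate singularities of $t\mapsto t^{z-1}f_{a,b,s}(t)$ on $(0,\infty)$ are $t\to0^+$, $t\to1$ and $t\to\infty$. As $t\to0^+$ the denominator $1-2t\cos\theta+t^2\to1$ uniformly in $\theta\in[0,\pi]$, so by dominated convergence $f_{a,b,s}(t)=t^a\big(c_b+O(t)\big)$ with $c_b:=\int_0^\pi(\sin\theta)^b\dd\theta=\sqrt\pi\,\gamhalf{b+1}/\gamhalf{b+2}$ by \eqref{eq:beta}; hence $t^{z-1}f_{a,b,s}$ is integrable near $0$ iff $\re z>-a$. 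Factoring $1-2t\cos\theta+t^2=t^2\big(1-2t^{-1}\cos\theta+t^{-2}\big)$ and arguing symmetrically gives $f_{a,b,s}(t)=t^{a-2s}\big(c_b+O(t^{-1})\big)$ as $t\to\infty$, so integrability at $\infty$ forces $\re z<2s-a$. Near $t=1$, writing $1-2t\cos\theta+t^2=(1-t)^2+2t(1-\cos\theta)$ and rescaling $\theta$ by $|1-t|$ shows $f_{a,b,s}(t)=O\big(|1-t|^{\min\{0,\,b+1-2s\}}\big)$ (with a logarithmic factor when $b+1=2s$), which is locally integrable iff $b+1-2s>-1$ — precisely the condition making $\gamhalf{b-2s+2}$ in \eqref{eq:mellinf} finite, for $b\le2s-2$ the identity being read by analytic continuation in $b$. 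Thus $M[f_{a,b,s},z]$ converges absolutely exactly on $-a<\re z<2s-a$.

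\emph{Reduction to a symmetric integral.} All integrands below being nonnegative, Tonelli lets me exchange the $\theta$- and $t$-integrations. For fixed $\theta\in(0,\pi)$ I would substitute $t=\sin(\theta+\beta)/\sin\beta$, $\beta\in(0,\pi-\theta)$: in the triangle with sides $1$ and $t$ enclosing the angle $\theta$, $\beta$ is the angle opposite the side of length $1$, so the law of sines gives $1-2t\cos\theta+t^2=\sin^2\theta/\sin^2\beta$, while $\dd t=-\sin\theta\,\dd\beta/\sin^2\beta$, with $\beta\to0^+$ matching $t\to\infty$ and $\beta\to(\pi-\theta)^-$ matching $t\to0^+$. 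After substitution every power of $\sin\theta$ from the Jacobian cancels and, writing $w:=z+a$, one is left with an integral over the open simplex $\Delta:=\{\theta,\beta>0,\ \theta+\beta<\pi\}$,
\[
M[f_{a,b,s},z]=\iint_\Delta(\sin\theta)^{b+1-2s}(\sin\beta)^{2s-w-1}(\sin(\theta+\beta))^{w-1}\,\dd\theta\,\dd\beta,
\]
the exponent conditions $b+1-2s>-1$, $\re(2s-w)>0$, $\re w>0$ being exactly the fundamental strip above.

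\emph{The symmetric identity and the main obstacle.} It then remains to prove the trigonometric Dirichlet formula
\[
\iint_\Delta(\sin\theta)^P(\sin\beta)^Q(\sin(\theta+\beta))^R\,\dd\theta\,\dd\beta=\frac{\sqrt\pi}{2}\cdot\frac{\gamhalf{P+Q+R+2}\,\gamhalf{P+1}\,\gamhalf{Q+1}\,\gamhalf{R+1}}{\gamhalf{P+Q+2}\,\gamhalf{Q+R+2}\,\gamhalf{R+P+2}},\qquad P,\re Q,\re R>-1.
\]
I would establish this by iterated beta integrals: $\gamma=\theta+\beta$ followed by a shift to the symmetric interval turns the inner integral into $\int\cos^Q(\tfrac\theta2-\phi)\cos^R(\tfrac\theta2+\phi)\,\dd\phi$ over its natural range; a $\tan\phi$-substitution plus a quadratic hypergeometric transformation collapses it to beta functions, the remaining $\theta$-integral is another beta integral, and repeated use of $\Gamma(z+1)=z\Gamma(z)$ and the Legendre duplication formula assembles the stated Gamma ratio. (Alternatively: invoke the classical evaluation of this integral, or take the Gegenbauer route — expand $(1-2\sqrt x\cos\theta+x)^{-s}=\sum_m C^{(s)}_{2m}(\cos\theta)x^m$, compute $\int_0^\pi(\sin\theta)^b C^{(s)}_{2m}(\cos\theta)\,\dd\theta$ by Chu--Vandermonde to get $\int_0^\pi(\sin\theta)^b(1-2\sqrt x\cos\theta+x)^{-s}\,\dd\theta=c_b\,{}_2F_1(s,s-\tfrac b2;\tfrac b2+1;x)$, fold the tail of the Mellin integral by $t\mapsto1/t$, and integrate term by term on $(0,1)$.) This Gamma/hypergeometric bookkeeping is the step I expect to be the main obstacle; a reassuring consistency check is $P=Q=R=0$, where both sides equal $\pi^2/2$. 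Substituting $P=b+1-2s$, $Q=2s-w-1$, $R=w-1$ and using $\gamhalf{Q+R+2}=\gamhalf{2s}=\Gamma(s)$ together with $w=z+a$ then yields exactly \eqref{eq:mellinf}.

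\emph{Meromorphic continuation and poles.} The right-hand side of \eqref{eq:mellinf} is a ratio of products of $\Gamma$-functions, hence a meromorphic function on all of $\C$ that coincides with the holomorphic function $z\mapsto M[f_{a,b,s},z]$ on the strip $-a<\re z<2s-a$; by uniqueness of analytic continuation it is that continuation. Its poles can occur only at the poles of the $z$-dependent numerator factors $\gamhalf{2s-z-a}$ and $\gamhalf{z+a}$, that is in $(2s-a+2\N)\cup(-a-2\N)$; each is simple, since $\Gamma$ has only simple poles and for $s>0$ these two arithmetic progressions are disjoint (ruling out double poles), while some may be annihilated by the zeros of the entire functions $1/\gamhalf{b-a-z+2}$ and $1/\gamhalf{b+a-2s+2+z}$ — so the pole set is contained in the asserted set and all poles are simple.
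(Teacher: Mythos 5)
You should first note that the paper contains no proof of this lemma at all: it is recalled verbatim from \cite[Lemma 4.1]{galeati2024anomalous}, so your proposal has to stand as a self-contained derivation rather than be compared step-by-step with an in-paper argument. Most of your structure is sound. The strip analysis at $t\to 0,\,1,\,\infty$ is correct, with one caveat: absolute convergence on $-a<\re z<2s-a$ also requires $b+1-2s>-1$ (integrability across $t=1$); this holds in every use in the paper since $b-2s+2=d-2s+2>0$, but your aside that the case $b\le 2s-2$ is "read by analytic continuation in $b$" cannot restore the claimed absolute convergence, and "$\gamhalf{b-2s+2}$ finite" is not the same condition (e.g.\ $\Gamma(-1/2)$ is finite). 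The law-of-sines substitution $t=\sin(\theta+\beta)/\sin\beta$ is correct ($1-2t\cos\theta+t^2=\sin^2\theta/\sin^2\beta$, $\dd t=\sin\theta\,\dd\beta/\sin^2\beta$ after orientation), the exponent bookkeeping gives exactly $(\sin\theta)^{b+1-2s}(\sin\beta)^{2s-w-1}(\sin(\theta+\beta))^{w-1}$ on the simplex, and substituting $P=b+1-2s$, $Q=2s-w-1$, $R=w-1$ (so $\gamhalf{Q+R+2}=\Gamma(s)$) into your symmetric formula does reproduce \eqref{eq:mellinf}. The continuation/pole argument at the end is also fine: the poles can only come from $\gamhalf{2s-z-a}$ and $\gamhalf{z+a}$, and the two progressions are disjoint for $s>0$, so all poles are simple.

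The genuine gap is the central symmetric identity itself, which you state but do not prove, and whose sketched proofs do not work as written. The inner integral $\int\cos^Q(\tfrac\theta2-\phi)\cos^R(\tfrac\theta2+\phi)\,\dd\phi$ does \emph{not} collapse to beta functions: for $Q=R=1$ it equals $(\tfrac\pi2-\tfrac\theta2)\cos\theta+\tfrac12\sin\theta$, which is not a Gamma ratio times a power of $\sin\theta$; only the full double integral has a closed Gamma form, so after the outer $\theta$-integration a genuinely nontrivial hypergeometric summation (a ${}_3F_2$ at $1$, or an equivalent) remains. The Gegenbauer alternative has the same issue plus a slip: the generating function is $(1-2\sqrt{x}\cos\theta+x)^{-s}=\sum_{m}C_m^{(s)}(\cos\theta)\,x^{m/2}$ including odd $m$, and discarding the odd terms is legitimate only after integrating in $\theta$ (odd-index Gegenbauer polynomials are odd in $\cos\theta$); the intermediate ${}_2F_1(s,s-\tfrac b2;\tfrac b2+1;x)$ formula is right (it checks against $b=1,2$, $s=1$), but the final term-by-term integration over $t\in(0,1)$ again leaves two ${}_3F_2(1)$'s whose evaluation to the Gamma ratio in \eqref{eq:mellinf} is precisely the substance of the lemma. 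Your target identity is correct (your $P=Q=R=0$ check, and e.g.\ $(P,Q,R)=(1,0,0)$ giving $\pi$ on both sides, confirm it, and it is classical), so the route is salvageable; but as it stands the heart of the proof is asserted rather than proved, and you must either carry out one of these summations in full or cite a reference for this trigonometric Dirichlet-type integral.
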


%-------------------------------------
\subsection{Explicit computations}

By the changes of variable $n-k=\tilde{k}$, $n=\lambda e_1$ and $k=\lambda w$, we write $\cI_{str}$ and $\cI_{mix}$ as
\begin{align*}
    \cI_{str}(\lambda)
    &=\int_{\R^d} \langle \tilde{k} \rangle^{-d-2\alpha} |\lambda e_1-\tilde{k}|^{-2s} \tilde{k}_2^2 \dd \tilde{k} \\
    &= \lambda^{d+2-2s} \int_{\R^d} (1+\lambda^2|w|^2)^{-d/2-\alpha} |e_1-w|^{-2s} w_2^2 \dd w, \\
    \cI_{mix}(\lambda)
    &=\int_{\R^d} \langle \tilde{k} \rangle^{-d-2\alpha} |\lambda e_1-\tilde{k}|^{-2s} (e_2\cdot \lambda P^\perp_{\tilde{k}} e_1)((\lambda e_1-\tilde{k})\cdot e_2) \dd \tilde{k} \\
    &= \lambda^{d+2-2s} \int_{\R^d} (1+\lambda^2|w|^2)^{-d/2-\alpha} |e_1-w|^{-2s} \frac{w_1w_2^2}{|w|^2} \dd w.
\end{align*}
We now set $h(t)=(1+t^2)^{-d-2\alpha}$.

The volume of the $(d-3)$-dimensional sphere is $\omega_{d-3}=2\pi^{d/2-1}/\Gamma(d/2-1)$.
For $d\ge 3$, moving to polar coordinates and using \eqref{eq:beta}, we have
\begin{align*}
    \cI_{str}(\lambda)
    &= \omega_{d-3} \lambda^{d + 2 - 2 s} \int_0^{\infty} \dd r \, r^{d + 1}  h (\lambda r) \int_0^{\pi} \dd \theta_1
    \frac{(\sin \theta_1)^d}{(1 + r^2 - 2 r \cos \theta_1)^s}  \\
    &\quad \cdot \int_0^{\pi}\dd \theta_2 (\sin \theta_2)^{d - 3} (\cos \theta_2)^2 \\
    &= \frac{2\pi^{d/2-1}\gamhalf{3}}{\gamhalf{d+1}}\lambda^{d+2-2s} \int_0^{\infty}  \dd r \, h (\lambda r) f_{d+1,d,s}(r),
\end{align*}
the last line using the notation introduced in \cref{lem:integral}.
Similarly,
\begin{align*}
    \cI_{mix}(\lambda)
    &= \omega_{d-3} \lambda^{d + 2 - 2 s} \int_0^{\infty} \dd r \, r^d  h (\lambda r) \\
    &\quad \cdot \int_0^{\pi} \dd \theta_2 \int_0^{\pi} \dd \theta_1\frac{(\cos \theta_2)^2 (\sin
    \theta_1)^2 \cos \theta_1}{(1 + r^2 - 2 r \cos \theta_1)^s} \sin
    (\theta_1)^{d - 2} (\sin \theta_2)^{d - 3}  \\
    &= \omega_{d - 3} \lambda^{d + 2 - 2 s} \int_0^{\infty} \dd r \, r^d  h (\lambda r) \int_0^{\pi}\dd \theta_1 \frac{\cos \theta_1 (\sin \theta_1)^d}{(1 + r^2 -
    2 r \cos \theta_1)^s}  \\
    &\quad \cdot \int_0^{\pi}\dd \theta_2 (\sin \theta_2)^{d - 3}
    (\cos \theta_2)^2 \\
    &= \frac{2\pi^{d/2-1}\gamhalf{3}}{\gamhalf{d+1}}\lambda^{d+2-2s} \int_0^{\infty} \dd r \, h (\lambda r) f(r)
\end{align*}
where
\begin{align*}
    f (r) &= r^d \int_0^{\pi} \frac{\cos \theta_1 (\sin \theta_1)^d}{(1 +
    r^2 - 2 r \cos \theta_1)^s} d \theta_1\\
    & = \frac{r^d}{d + 1} \left[ \frac{(\sin \theta)^{d + 1}}{(1 + r^2 - 2 r
    \cos \theta)^s} \right]_0^{\pi}
    + \frac{2 s r^{d + 1}}{d + 1}  \int_0^{\pi}
    \frac{(\sin \theta_1)^{d + 2}}{(1 + r^2 - 2 r \cos \theta_1)^{s + 1}} d
    \theta_1\\
    & = \frac{2s}{d+1} f_{d+1,d+2,s+1}(r).
\end{align*}
With the same procedure, we get the same expressions for $\cI_{str}$ and $\cI_{mix}$ also in $d=2$. We can now apply formula \eqref{eq:Mellin_Res}. For the product $h(\lambda r) f_{d+1,d,s}(r)$ in $\cI_{str}$, we take $r$, $r'$ such that $d+2-2s<r<d+2\alpha$ (which is possible if $s+\alpha>1$), $d+2<r'<d+2\alpha+2$. In the strip $r<\re z<r'$, $M[h,\cdot]$ has a single simple pole in $z=d+2\alpha$, with residue $-1$, while $M[f_{d+1,d,s},1-\cdot]$ has a single pole in $z=d+2$, so that the sum of residues in \eqref{eq:Mellin_Res} becomes
\begin{multline}
\label{eq:res_Is}
    \sum_{r<\re z< r'} \res\set{-\lambda^{-z}M[h,z]M[f_{d+1,d,s},1-z]} \\
    = \lambda^{-d-2\alpha}M[f_{d+1,d,s},1-d-2\alpha] +O(\lambda^{-d-2}).
\end{multline}
To bound the integral on $r'+i\R$ in \eqref{eq:Mellin_Res}, we recall the limit
\begin{align*}
    \lim_{|y|\to \infty} |\Gamma(x+iy)|e^{\pi|y|/2}|y|^{1/2-x} = \sqrt{2\pi}.
\end{align*}
Since there are no poles on $r'+i\R$, we have 
\begin{align*}
    |M[h,r'+iy]|\lesssim (1\vee |y|)^p e^{-\pi|y|/2}, \quad  |M[f_{d+1,d,s},1-r'-iy]|\lesssim (1\vee |y|)^q,
\end{align*}
for suitable exponents $p$ and $q$ depending on $d,s,\alpha$.
%\todo{Mario: dipendono anche da $r'$, ma poiché $r'$ varia in un intervallo questa dipendenza è assorbita dalle altre} 
Therefore
\begin{align}\label{eq:bound_rem_Is}
    \int_{r'-i\infty}^{r'+i\infty} \frac{|M[h,z]M[f_{d+1,d,s},1-z]|}{|\lambda^z|} dz \lesssim \lambda^{-r'} \lesssim \lambda^{-d-2}.
\end{align}
Hence, by formula \eqref{eq:Mellin_Res}, the equality \eqref{eq:res_Is} and the bound \eqref{eq:bound_rem_Is} we get for $\cI_{str}$
\begin{align}\label{eq:bd_Is}
    \cI_{str} &= c_{str}\lambda^{-2s+2-2\alpha} +O(\lambda^{-2s}),
\end{align}
where the constant $c_{str}$ is given by
\begin{align*}
    c_{str}&= \frac{2\pi^{d/2-1}\gamhalf{3}}{\gamhalf{d+1}} M[f_{d+1,d,s},1-d-2\alpha] \\
    &= \frac{2\pi^{d/2-1}\gamhalf{3}}{\gamhalf{d+1}} \cdot \frac{\sqrt\pi \gamhalf{d-2s+2}\gamhalf{d+1}}{2\Gamma(s)} \cdot \frac{\gamhalf{2s+2\alpha-2}\gamhalf{2-2\alpha}}{\gamhalf{d+2\alpha}\gamhalf{d+4-2s-2\alpha}} \\
    &= \frac{\pi^{d/2}\gamhalf{d-2s+2}}{2\Gamma(s)} \cdot \frac{\Gamma(s+\alpha-1)\Gamma(1-\alpha)}{\gamhalf{d+2\alpha}\gamhalf{d+4-2s-2\alpha}} \\
    &= -\alpha(d+2\alpha) \frac{\pi^{d/2}\gamhalf{d-2s+2}}{4\Gamma(s)} \cdot \frac{\Gamma(s+\alpha-1)\Gamma(\alpha)}{\gamhalf{d+2+2\alpha}\gamhalf{d+4-2s-2\alpha}} \\
    &= \alpha(d+2\alpha)C_{d,s,\alpha},
\end{align*}
wth $C_{d,s,\alpha}$ as in \eqref{eq:constant_in_bd}. 

We proceed similarly for the product $h(\lambda r) f_{d+1,d+2,s+1}(r)$ in $\cI_{mix}$. We take $r$, $r'$ such that $d-2s<r<d$, $d+2<r'<d+2\alpha+2$.
In the a simple pole in $z=d+2\alpha$, with residue $-1$, while $M[f_{d+1,d+2,s+1},1-\cdot]$ has one pole in $z=d+2$, so that the sum of residues in \eqref{eq:Mellin_Res} becomes
\begin{align}
\begin{aligned}\label{eq:res_Im}
    \sum_{r<\re z< r'} &\res\set{-\lambda^{-z}M[h,z]M[f_{d+1,d+2,s+1},1-z]} \\
    &= \lambda^{-d-2\alpha}M[f_{d+1,d+2,s+1},1-d-2\alpha] +O(\lambda^{-d-2}).
\end{aligned}
\end{align}
Proceeding as for $\cI_{str}$, we get the same bound on the integral on $r'+i\R$, namely
\begin{align}\label{eq:bound_rem_Im}
    \int_{r'-i\infty}^{r'+i\infty} \frac{|M[h,z]M[f_{d+1,d+2,s+1},1-z]|}{|\lambda^z|} dz \lesssim \lambda^{-r'} \lesssim \lambda^{-d-2}.
\end{align}
Hence, by formula \eqref{eq:Mellin_Res}, the equality \eqref{eq:res_Im} and the bound \eqref{eq:bound_rem_Im} we get for $\cI_{str}$
\begin{align}\label{eq:bd_Im}
    \cI_{mix} &= c_{mix}\lambda^{-2s+2-2\alpha} +O(\lambda^{-2s})
\end{align}
where the constant $c_{mix}$ satisfies
\begin{align*}
    c_{mix} &= \frac{2\pi^{d/2-1}\gamhalf{3}}{\gamhalf{d+1}} \cdot \frac{2s}{d+1} M[f_{d+1,d+2,s-1},1-d-2\alpha] \\
    &= \frac{2\pi^{d/2-1}\gamhalf{3}}{\gamhalf{d+1}} \cdot \frac{2s}{d+1} \cdot \frac{\sqrt\pi \gamhalf{d-2s+2}\gamhalf{d+3}}{2\Gamma(s+1)} \cdot \frac{\gamhalf{2s+2\alpha}\gamhalf{2-2\alpha}}{\gamhalf{d+2+2\alpha}\gamhalf{d-2s-2\alpha+4}} \\
    &= \frac{\pi^{d/2} s \gamhalf{d-2s+2}}{2\Gamma(s+1)} \cdot \frac{\Gamma(s+\alpha)\Gamma(1-\alpha)}{\gamhalf{d+2+2\alpha}\gamhalf{d-2s-2\alpha+4}} \\
    &= -2\alpha(s+\alpha-1) \frac{\pi^{d/2}\gamhalf{d-2s+2}}{4\Gamma(s)} \cdot \frac{\Gamma(s+\alpha-1)\Gamma(-\alpha)}{\gamhalf{d+2+2\alpha}\gamhalf{d-2s-2\alpha+4}} \\
    &= 2\alpha(s+\alpha-1) C_{d,s,\alpha},
\end{align*}
where $C_{d,s,\alpha}$ is given by \eqref{eq:constant_in_bd}. Putting together \eqref{eq:three_terms}, \eqref{eq:bd_It}, \eqref{eq:bd_Is} and \eqref{eq:bd_Im}, we arrive at
\begin{align*}
    v \cdot \mathbb H_{d,s,\alpha} (n) v &= -\eta_{d,s,\alpha}|n|^{-2s+2-2\alpha} +O(|n|^{-2s})
\end{align*}
where
\begin{align*}
    \eta_{d,s,\alpha} &= -c_{tra}-(d-1)c_{str}+2c_{mix} \\
    &= C_{d,s,\alpha} \Big((d-1)(s+\alpha-1)(d+2-2s-2\alpha) \\
    &\qquad \qquad \qquad \qquad \qquad-(d-1)\alpha(d+2\alpha) +4\alpha(s+\alpha-1)\Big).
\end{align*}
The proof of \cref{thm:main_bd_integral} is complete.

%%%%%%%%%%%%%%%%%%%%%%%%%%%%%%%%%%
\section{Proof of Existence}\label{sec:Proof_Existence}

In this Section, we use the main a priori bound \eqref{eq:d} in a (standard) compactness and convergence argument, to show weak existence of a solution to \eqref{eq:KK} (in the sense of \cref{def:sol}), satisfying also the bound \eqref{eq:reg_gain}. In this Section, we always assume that $d$, $s$ and $\alpha$ satisfy \cref{hp:main_hp}.

%-------------------------------
\subsection{Tightness}

We show tightness of the family $(\mf^\nu)_\nu$ in a suitable weighted Sobolev space, where for $\nu>0$, $\mf^\nu$ is the solution to the viscous equation \eqref{eq:SPDE_viscous}.

The following lemma gives uniform bounds in  $L^\infty_t(L^2_\omega(\dot{H}^{-s}))$, $L^2_{t,\omega}(\dot{H}^{-s+1-\alpha})$ and $L^{2p}_\omega(L^\infty_t(\dot{H}^{-s}))$, for $p\in (0,1)$:

 \begin{lemma}\label{lem:sup_by_stoch_Gronwall}
    Let $p\in(0,1)$ and $T>0$. There exists $c_p>0$ such that the unique solution $\mf^\nu_t$ of \eqref{eq:SPDE_viscous} starting from $\mf_0 \in \dot H^{-s}$ satisfies
 	\begin{align}
 		\begin{split}\label{eq:reg_gain_nu}
 		&\sup_{\nu \in (0,1]} \Biggl(\sup_{t\in[0,T]} \E \|\mf^\nu_t\|^2_{\dot H^{-s}}
 		+2\nu \E \int_0^T \|\mf^\nu_r\|^2_{\dot H^{-s+1}}\dd r\\
 		&\qquad\qquad+\eta_{d,s,\alpha} \E \int_0^T \|\mf^\nu_r\|^2_{\dot H^{-s-\alpha+1}}\dd r \Biggr) \le \|\mf_0\|^2_{\dot{H}^{-s}}e^{\rho_{d,s,\alpha}T},
 		\end{split}
 	\end{align}
    and
 	\begin{equation}\label{eq:sup_time_nu}
 		 \sup_{\nu \in (0,1]}\E \left(\sup_{t\in[0,T]} \|\mf^\nu_t\|^{2p}_{\dot H^{-s}}\right)
 		\le c_p \|\mf_0\|^{2p}_{\dot{H}^{-s}}  e^{p\rho_{d,s,\alpha}T}.
 	\end{equation}
 \end{lemma}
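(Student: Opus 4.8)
The plan is to combine the It\^o energy balance \eqref{eq:energy_balance} satisfied by the viscous solution $\mf^\nu$ with the main a priori bound \eqref{eq:d}, which turns the sign-indefinite Hilbert--Schmidt term into a genuinely regularizing one, and then to run two Gronwall-type arguments. Concretely, applying \eqref{eq:d} pointwise in $r$, the combination $\int_0^t\|B[\mf^\nu_r]\|^2_{\HS(H^{d/2+\alpha}_{\div};\dot H^{-s}_{\div})}\dd r-c_0\int_0^t\|\mf^\nu_r\|^2_{\dot H^{-s+1}}\dd r$ appearing in \eqref{eq:energy_balance} is bounded above by $-\eta_{d,s,\alpha}\int_0^t\|\mf^\nu_r\|^2_{\dot H^{-s-\alpha+1}}\dd r+\rho_{d,s,\alpha}\int_0^t\|\mf^\nu_r\|^2_{\dot H^{-s}}\dd r$; substituting and keeping the viscosity term on the left yields, $\P$-a.s.\ for every $t\in[0,T]$, the master inequality
\begin{multline}\label{eq:master_ineq_plan}
    \|\mf^\nu_t\|^2_{\dot H^{-s}}+2\nu\int_0^t\|\mf^\nu_r\|^2_{\dot H^{-s+1}}\dd r+\eta_{d,s,\alpha}\int_0^t\|\mf^\nu_r\|^2_{\dot H^{-s-\alpha+1}}\dd r\\
    \le\|\mf_0\|^2_{\dot H^{-s}}+\rho_{d,s,\alpha}\int_0^t\|\mf^\nu_r\|^2_{\dot H^{-s}}\dd r+N^\nu_t,
\end{multline}
where $N^\nu_t\coloneqq 2\int_0^t\langle\mf^\nu_r,B[\mf^\nu_r]\dd W^\cK_r\rangle_{\dot H^{-s}}$ is a continuous local martingale, null at $t=0$.

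For \eqref{eq:reg_gain_nu} I would take expectations in \eqref{eq:master_ineq_plan}: the stochastic integral has zero mean, since $\E\sup_{t\in[0,T]}\|\mf^\nu_t\|^2_{\dot H^{-s}}<\infty$ and $\mf^\nu\in L^2_{t,\omega}(\dot H^{-s}_{\vee,\div})\subseteq L^2_{t,\omega}(\dot H^{-s+1}_{\div})$ (from \cref{prop:wellposed_viscous}) imply, via \cref{lem:HS_homogeneous}, that $\E[N^\nu]_T<\infty$ so that $N^\nu$ is a true martingale (one could equally localise and pass to the limit by monotone/dominated convergence). Discarding the two nonnegative integrals on the left then gives $\E\|\mf^\nu_t\|^2_{\dot H^{-s}}\le\|\mf_0\|^2_{\dot H^{-s}}+\rho_{d,s,\alpha}\int_0^t\E\|\mf^\nu_r\|^2_{\dot H^{-s}}\dd r$, whence $\E\|\mf^\nu_t\|^2_{\dot H^{-s}}\le\|\mf_0\|^2_{\dot H^{-s}}e^{\rho_{d,s,\alpha}t}$ by the classical Gronwall lemma; reinserting this bound into the right-hand side of \eqref{eq:master_ineq_plan} at $t=T$ and taking expectations also controls $2\nu\,\E\int_0^T\|\mf^\nu_r\|^2_{\dot H^{-s+1}}\dd r$ and $\eta_{d,s,\alpha}\,\E\int_0^T\|\mf^\nu_r\|^2_{\dot H^{-s-\alpha+1}}\dd r$ by $\|\mf_0\|^2_{\dot H^{-s}}e^{\rho_{d,s,\alpha}T}$, and combining the estimates gives \eqref{eq:reg_gain_nu} (up to a harmless multiplicative constant). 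All of this is uniform in $\nu\in(0,1]$, the viscosity term only helping on the left-hand side.

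The point of \eqref{eq:sup_time_nu}, and the reason the exponent $p$ is taken strictly below $1$, is $\nu$-uniformity: a naive Burkholder--Davis--Gundy estimate for $\E\sup_{t\in[0,T]}|N^\nu_t|$ fails to be uniform in $\nu$, because \cref{lem:HS_homogeneous} only gives $[N^\nu]_T\lesssim\sup_{r\in[0,T]}\|\mf^\nu_r\|^2_{\dot H^{-s}}\int_0^T\|\mf^\nu_r\|^2_{H^{-s+1}}\dd r$ while $\E\int_0^T\|\mf^\nu_r\|^2_{\dot H^{-s+1}}\dd r$ blows up like $\nu^{-1}$ by \eqref{eq:reg_gain_nu}. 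Instead I would discard the nonnegative integrals on the left of \eqref{eq:master_ineq_plan} to obtain, $\P$-a.s.\ for $t\in[0,T]$,
\begin{equation*}
    \|\mf^\nu_t\|^2_{\dot H^{-s}}\le\|\mf_0\|^2_{\dot H^{-s}}+\rho_{d,s,\alpha}\int_0^t\|\mf^\nu_r\|^2_{\dot H^{-s}}\dd r+N^\nu_t,
\end{equation*}
and apply a stochastic Gronwall lemma (of the type due to Scheutzow, which requires only that $N^\nu$ be a continuous local martingale and that the moment exponent be $<1$) with constant drift rate $\rho_{d,s,\alpha}$ and deterministic forcing $\|\mf_0\|^2_{\dot H^{-s}}$; for $p\in(0,1)$ this produces $\E[(\sup_{t\in[0,T]}\|\mf^\nu_t\|^2_{\dot H^{-s}})^p]\le c_p\|\mf_0\|^{2p}_{\dot H^{-s}}e^{p\rho_{d,s,\alpha}T}$ with $c_p$ depending only on $p$, hence uniform in $\nu\in(0,1]$, which is \eqref{eq:sup_time_nu}. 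The main obstacle is precisely this $\nu$-uniform supremum bound: it is what forces both the sub-linear exponent and the use of a stochastic rather than classical Gronwall inequality, the remaining work being the routine bookkeeping of \eqref{eq:energy_balance}, \eqref{eq:d} and \cref{lem:HS_homogeneous}.
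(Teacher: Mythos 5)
Your argument is correct and essentially the paper's: the paper likewise inserts \eqref{eq:d} into \eqref{eq:energy_balance} and then invokes Scheutzow's stochastic Gr\"onwall lemma, only it applies that lemma directly to the full quantity $Z(t)=\|\mf^\nu_t\|^2_{\dot H^{-s}}+2\nu\int_0^t\|\mf^\nu_r\|^2_{\dot H^{-s+1}}\dd r+\eta_{d,s,\alpha}\int_0^t\|\mf^\nu_r\|^2_{\dot H^{-s-\alpha+1}}\dd r$, which yields \eqref{eq:reg_gain_nu} and \eqref{eq:sup_time_nu} in one stroke (with exactly the constant $e^{\rho_{d,s,\alpha}T}$) and avoids any discussion of whether the stochastic integral is a true martingale. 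The only imprecision on your side is the claim that $\E[N^\nu]_T<\infty$ follows from the stated facts: Cauchy--Schwarz only gives $\E\bigl[[N^\nu]_T^{1/2}\bigr]<\infty$, which via Burkholder--Davis--Gundy already makes $N^\nu$ a true martingale, and your localization fallback (or the paper's route) handles this point in any case.
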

 \begin{proof}
 	Combining \eqref{eq:energy_balance} with estimate \eqref{eq:d}, we infer
 	\begin{align*}
 		&\|\mf^\nu_t\|^2_{\dot H^{-s}} 
 		+ 2\nu \int_0^t \|\mf^\nu_r\|^2_{\dot H^{-s+1}} \dd r
 		+ \eta_{d,s,\alpha}  \int_0^t \|\mf^\nu_r\|^2_{\dot H^{-s-\alpha+1}}  \dd r \\
 		&\quad\le \|\mf_0\|^2_{\dot H^{-s}} 
 		+ \rho_{d,s,\alpha} \int_0^t \|\mf^\nu_r\|^2_{\dot H^{-s}} \dd r
 		+ 2\int_0^t \langle \mf^\nu_r, B[\mf^\nu_r] \dd W^\cK_r \rangle_{\dot H^{-s}}.
 	\end{align*}
 	Defining $Z(t)=\|\mf^\nu_t\|^2_{\dot H^{-s}} 
 	+ 2\nu \int_0^t \|\mf^\nu_r\|^2_{\dot H^{-s+1}} \dd r
 	+ \eta_{d,s,\alpha}  \int_0^t \|\mf^\nu_r\|^2_{\dot H^{-s-\alpha+1}}  \dd r$, then $Z$ satisfies the following relation
 	\begin{equation*}
 		Z(t) \le Z(0) + \rho_{d,s,\alpha} \int_0^t Z(s) \dd s + N(t),
 	\end{equation*}
 	where $N(t)$ is a continuous local martingale. The statement is then a consequence of the stochastic Gr\"onwall lemma \cite[Theorem 4]{Scheutzow2013}.
 \end{proof}

For $R>0$, we introduce the stopping time and stopped process, 
\begin{align*}
	&\tau_R^\nu \coloneqq \inf \{t \ge 0 \; | \; \|\mf^\nu_t\|_{\dot H^{-s}} \ge R \} \wedge T,\\
	&\mf^{\nu,R}_t \coloneqq \mf^\nu_{t \wedge \tau^\nu_R}.
\end{align*}
For this stopped process, we show the following uniform bound in expectation.

\begin{lemma}\label{lem:stopped_Holder_continuity}
	Fix $\gamma \in \left(0, \frac 12\right)$, $p\ge1$. Then, for every $R>0$ it holds
	\begin{equation*}
		\sup_{\nu \in (0,1]} \E \left[ \|\mf^{\nu,R} \|^p_{C^\gamma([0,T];H^{-s-2})} \right] \lesssim_{\gamma,p, T} R^p
	\end{equation*}
\end{lemma}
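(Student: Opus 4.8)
The plan is to use the integral form of the stopped viscous equation and to split the stopped process into the constant initial datum, a drift term, and a stochastic term, estimating the first two pathwise and the third by Burkholder--Davis--Gundy together with Kolmogorov's continuity criterion. Concretely, by \eqref{eq:KK_def} adapted to \eqref{eq:SPDE_viscous}, for every $t\in[0,T]$ one has $\mf^{\nu,R}_t=\mf_0-N^{\nu,R}_t+D^{\nu,R}_t$, where $N^{\nu,R}_t:=\int_0^{t\wedge\tau^\nu_R}B[\mf^\nu_r]\dd W^\cK_r$ and $D^{\nu,R}_t:=(\nu+\tfrac{c_0}{2})\int_0^{t\wedge\tau^\nu_R}\Delta\mf^\nu_r\dd r$. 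The crucial elementary observations are that, by continuity of $r\mapsto\|\mf^\nu_r\|_{\dot H^{-s}}$, one has $\|\mf^\nu_r\|_{\dot H^{-s}}\le R$ for $\dd r$-a.e.\ $r\le\tau^\nu_R$, that $\nu\le1$, and that (since $s>1$ under \cref{hp:main_hp}) the bounds $\|\Delta f\|_{H^{-s-2}}\le\|f\|_{\dot H^{-s}}$, $\|f\|_{H^{-s-2}}\lesssim\|f\|_{\dot H^{-s-1}}$ and $\|f\|_{H^{-s}}\le\|f\|_{\dot H^{-s}}$ hold.

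For the drift, combining $\Delta\colon\dot H^{-s}\to H^{-s-2}$ with $\|\mf^\nu_r\|_{\dot H^{-s}}\le R$ on $[0,\tau^\nu_R]$ and $\nu\le1$ gives $\|D^{\nu,R}_t-D^{\nu,R}_\sigma\|_{H^{-s-2}}\le(1+\tfrac{c_0}{2})R\,|t-\sigma|$ for $0\le\sigma\le t\le T$, whence $\|D^{\nu,R}\|_{C^\gamma([0,T];H^{-s-2})}\lesssim_{\gamma,T}R$ \emph{pathwise} and uniformly in $\nu$. The initial datum contributes only $\|\mf_0\|_{H^{-s-2}}\lesssim\|\mf_0\|_{\dot H^{-s}}$, a harmless constant (in the applications one takes $R$ large relative to $\|\mf_0\|_{\dot H^{-s}}$).

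The substance is the stochastic term. Fix $q>2$. By \cref{cor:stoch_int_bd} and $\|\cdot\|_{H^{-s-2}}\lesssim\|\cdot\|_{\dot H^{-s-1}}$, the progressively measurable integrand $B[\mf^\nu_r]\mathbf 1_{\{r\le\tau^\nu_R\}}$ takes values in $\HS(H^{d/2+\alpha}_{\div};H^{-s-2}_{\div})$ with norm $\lesssim\|\mf^\nu_r\|_{H^{-s}}\mathbf 1_{\{r\le\tau^\nu_R\}}\le R$; in particular $N^{\nu,R}$ is a genuine $H^{-s-2}_{\div}$-valued martingale, and the Hilbert-space Burkholder--Davis--Gundy inequality yields, for $0\le\sigma\le t\le T$,
\[
\E\bigl[\|N^{\nu,R}_t-N^{\nu,R}_\sigma\|_{H^{-s-2}}^q\bigr]\lesssim_q\E\Bigl[\Bigl(\int_\sigma^t\bigl\|B[\mf^\nu_r]\mathbf 1_{\{r\le\tau^\nu_R\}}\bigr\|_{\HS(H^{d/2+\alpha}_{\div};H^{-s-2})}^2\dd r\Bigr)^{q/2}\Bigr]\lesssim_q R^q\,|t-\sigma|^{q/2}.
\]

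Since $q/2>1$, Kolmogorov's continuity criterion for $H^{-s-2}_{\div}$-valued processes (with $N^{\nu,R}_0=0$) then produces a modification of $N^{\nu,R}$ with paths in $C^{\gamma'}([0,T];H^{-s-2})$ for every $\gamma'<\tfrac12-\tfrac1q$, and the moment bound $\E[\|N^{\nu,R}\|_{C^{\gamma'}([0,T];H^{-s-2})}^q]\lesssim_{\gamma',q,T}R^q$, uniformly in $\nu\in(0,1]$. Given the target $\gamma\in(0,\tfrac12)$ and $p\ge1$, I would then pick $q=q(\gamma,p)$ large enough that $\gamma<\tfrac12-\tfrac1q$ and $q\ge p$, apply the previous step with $\gamma'=\gamma$, and use Jensen's inequality on the probability space to pass from the $q$-th to the $p$-th moment; combined with the pathwise drift bound and the triangle inequality in $C^\gamma([0,T];H^{-s-2})$ this gives the claim. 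I expect the only real (and mild) difficulties to be bookkeeping: getting the chain of Sobolev embeddings right so that the Hilbert--Schmidt bound of \cref{cor:stoch_int_bd}, which is stated with values in $\dot H^{-s-1}$, can be transferred to $H^{-s-2}$ (this is where $s>1$ is used), and tracking the moment exponent so that one application of Kolmogorov's criterion with $q$ large enough covers the whole range $\gamma<1/2$, with Jensen supplying every $p\ge1$.
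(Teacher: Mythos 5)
Your proposal is correct and follows essentially the same route as the paper: the core in both is the increment estimate $\E\|\mf^{\nu,R}_t-\mf^{\nu,R}_\sigma\|_{H^{-s-2}}^q\lesssim R^q|t-\sigma|^{q/2}$, obtained from the Burkholder--Davis--Gundy inequality combined with the Hilbert--Schmidt bound of \cref{lem:HS_homogeneous}/\cref{cor:stoch_int_bd}, followed by a Kolmogorov/Garsia--Rodemich--Rumsey-type moment bound on the H\"older seminorm and Jensen's inequality to cover all $p\ge 1$. The only immaterial difference is that you bound the drift pathwise and apply the Kolmogorov criterion to the martingale part alone, whereas the paper estimates increments of the whole stopped process and invokes \cite[Theorem B.1.5]{da1996ergodicity}, handling the supremum part via $\|\mf^{\nu,R}_t\|_{H^{-s-2}}\le R$ rather than your constant-$\mf_0$ remark.
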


\begin{proof}
    We write the stopped version of \eqref{eq:SPDE_viscous} in integral form in the interval $[s,t] \subseteq [0,T]$ and take the expectation of the $p$-th power of the $H^{-s-2}$ norm:
    \begin{align*}
        \E \| \mf^{\nu,R}_t - \mf^{\nu,R}_s\|^p_{H^{-s-2}}
        &\lesssim_p
        \E\left\| \int_s^t B[\mf^{\nu,R}_r] \dd W^\cK_r \right\|^p_{H^{-s-2}} \\
        &+ \E\left\| \left( \nu + \frac{c_0}{2} \right) \int_s^t \Delta \mf^{\nu,R}_r\dd r \right\|^p_{H^{-s-2}}.
    \end{align*}
    By Burkholder-Davis-Gundy inequality and \cref{lem:HS_homogeneous}, we get
	\begin{align*}
		\E\left\| \int_s^t B[\mf^{\nu,R}_r]  \dd W^\cK_r \right\|^p_{H^{-s-2}} 
		&\lesssim_{p,T} \E \left( \int_s^t \| B[\mf^{\nu,R}_r]\|^2_{\HS(H^{d/2 + \alpha}_{\div};H^{-s-2})} \dd r\right)^{\frac p2} \\
		& \lesssim \E \left( \int_s^t \| \mf^{\nu,R}_r\|^2_{H^{-s}} \dd r\right)^{\frac p2} \\
		& \le |t-s|^\frac p2 R^p.
	\end{align*}
    We also have
	\begin{align*}
		\E\left\| \left( \nu + \frac{c_0}{2} \right) \int_s^t \Delta \mf^{\nu,R}_r\dd r \right\|^p_{H^{-s-2}}
		& \lesssim \E \left( \int_s^t \left\|  \mf^{\nu,R}_r\right\|_{H^{-s}} \dd r \right)^p \\
		& \lesssim_T |t-s|^\frac p2 R^p.
	\end{align*}
	Combining the two estimates, we conclude that
	\begin{equation*}
		\E \| \mf^{\nu,R}_t - \mf^{\nu,R}_s\|^p_{H^{-s-2}}  \lesssim_{p,T} |t-s|^\frac p2 R^p.
	\end{equation*}
	As a consequence, by \cite[Theorem B.1.5]{da1996ergodicity}, for $\gamma \in \left(0, \frac 12\right)$, $p> \frac{2}{1-2\gamma}$ and $\beta \coloneqq \frac12 \left(\frac12 + \gamma +\frac1p \right) \in \left(0,\frac12\right)$ (in particular satisfying $\gamma<\beta - 1/p$), we obtain 
    \begin{align*}
		\E \left[\mf^{\nu,R} \right]^p_{C^\gamma([0,T];H^{-s-2})}
        &:= \E \sup_{0\le s<t\le T}\frac{\|\mf^{\nu,R}_t-\mf^{\nu,R}_s\|_{H^{-s-2}}^p}{|t-s|^{\gamma p}} \\
        &\lesssim_{\gamma,p} \E \int_0^T\int_0^T \frac{\| \mf^{\nu,R}_t - \mf^{\nu,R}_s\|^p_{H^{-s-2}}}{|t-s|^{1+\beta p}}\dd s  \dd t \\
		& \lesssim_{\gamma,p, T} R^p \int_0^T\int_0^T |t-s|^{p/2 -1 -\beta p}\dd s  \dd t \lesssim_{\gamma,p, T} R^p.
    \end{align*}
    This estimate, together with the easy bound $\| \mf^{\nu,R}_t \|_{H^{-s-2}}\le \| \mf^{\nu,R}_t \|_{\dot H^{-s}} \le R$, concludes the proof for $p> \frac{2}{1-2\gamma}$. The general case $p\ge1$ follows by Jensen's inequality.
\end{proof}

A corollary of the previous lemma is the uniform bound in probability for $\mf^\nu$ in the space $C_t^\gamma(\dot{H}^{-s-2}))$.

\begin{corollary} \label{cor:uniform_Holder_continuity}
	Fix $\gamma \in \left(0,\frac 12\right)$, $T>0$. For every $\delta>0$, there exists $C_{\delta,\gamma,T}> 0$ such that
	\begin{equation*}
		\sup_{\nu \in (0,1]}\P \left( \|\mf^{\nu} \|_{C^\gamma([0,T];H^{-s-2})} > C_{\delta,\gamma,T} \right) \le \delta.
	\end{equation*}
\end{corollary}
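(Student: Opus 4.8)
The plan is to combine the uniform-in-$\nu$ estimate \eqref{eq:sup_time_nu} on $\sup_{t\in[0,T]}\|\mf^\nu_t\|_{\dot H^{-s}}$ with the uniform moment bound of \cref{lem:stopped_Holder_continuity} for the stopped processes, splitting the event according to whether the stopping time $\tau^\nu_R$ introduced before \cref{lem:stopped_Holder_continuity} has been reached. Fix $\gamma\in(0,1/2)$, $T>0$ and $\delta>0$, and let $p\ge 1$ and $p'\in(0,1)$ be arbitrary, to be fed into the two lemmas respectively. On the event $\{\tau^\nu_R=T\}$ one has $\mf^{\nu,R}_t=\mf^\nu_t$ for all $t\in[0,T]$, hence $\|\mf^\nu\|_{C^\gamma([0,T];H^{-s-2})}=\|\mf^{\nu,R}\|_{C^\gamma([0,T];H^{-s-2})}$ there; therefore for any $C>0$,
\[
\P\left(\|\mf^\nu\|_{C^\gamma([0,T];H^{-s-2})}>C\right)\le \P\left(\tau^\nu_R<T\right)+\P\left(\|\mf^{\nu,R}\|_{C^\gamma([0,T];H^{-s-2})}>C\right).
\]

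First I would bound the escape probability. By the definition of $\tau^\nu_R$, Markov's inequality and \eqref{eq:sup_time_nu},
\[
\P\left(\tau^\nu_R<T\right)\le \P\left(\sup_{t\in[0,T]}\|\mf^\nu_t\|_{\dot H^{-s}}\ge R\right)\le R^{-2p'}\,\E\left[\sup_{t\in[0,T]}\|\mf^\nu_t\|_{\dot H^{-s}}^{2p'}\right]\le R^{-2p'}c_{p'}\|\mf_0\|_{\dot H^{-s}}^{2p'}e^{p'\rho_{d,s,\alpha}T}.
\]
Since the right-hand side is independent of $\nu$ and vanishes as $R\to\infty$, I can fix $R=R_\delta$ large enough that $\P(\tau^\nu_R<T)\le\delta/2$ for every $\nu\in(0,1]$.

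With $R_\delta$ now fixed, the second term is controlled by Markov's inequality and \cref{lem:stopped_Holder_continuity}:
\[
\P\left(\|\mf^{\nu,R_\delta}\|_{C^\gamma([0,T];H^{-s-2})}>C\right)\le C^{-p}\,\E\left[\|\mf^{\nu,R_\delta}\|_{C^\gamma([0,T];H^{-s-2})}^p\right]\lesssim_{\gamma,p,T}C^{-p}R_\delta^p,
\]
with an implicit constant independent of $\nu$. Choosing $C=C_{\delta,\gamma,T}$ sufficiently large makes this term $\le\delta/2$ for every $\nu\in(0,1]$, and adding the two bounds gives the claim. I do not expect a genuine obstacle here: the argument is only a two-piece decomposition plus Markov's inequality, resting entirely on the $\nu$-uniform estimates already proved; the sole mild point is that \eqref{eq:sup_time_nu} holds only for exponents $2p'<2$, which nonetheless still suffices to make $\P(\tau^\nu_R<T)$ arbitrarily small uniformly in $\nu$.
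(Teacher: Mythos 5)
Your argument is correct and is essentially the paper's own proof: the same decomposition over the events $\{\tau^\nu_R<T\}$ and its complement, Markov's inequality with a small exponent $2p'<2$ applied to \eqref{eq:sup_time_nu} to fix $R$, and then Markov's inequality with \cref{lem:stopped_Holder_continuity} (the paper simply uses the first moment, $p=1$) to fix $C_{\delta,\gamma,T}$, with the same order of choices. The only cosmetic difference is your use of a general exponent $p\ge 1$ in the second step, which changes nothing of substance.
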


\begin{proof}
	For notational convenience, we call $A^\nu_\delta$ the event
	\begin{equation*}
		A^\nu_\delta \coloneqq \left\{ \|\mf^{\nu} \|_{C^\gamma([0,T];H^{-s-2})} > C_{\delta,\gamma,T} \right\},
	\end{equation*}
 %    hence giving
	% \begin{equation}\label{eq:bdd_stopping_time}
	% 	\sup_{\nu \in (0,1]} \P \left( \tau^\nu_R \le T \right) \le \tilde C_{p,T} R^{-2p}.
	% \end{equation}
	for $C_{\delta,\gamma,T}$ to be determined. We write
	\begin{align} \label{eq:splitted_prob}
		\begin{split}
		\P \left( A^\nu_\delta\right) 
		&= \P\left( A^\nu_\delta \, \cap \{\tau^\nu_R \ge T\}\right) + \P\left( A^\nu_\delta \, \cap \{\tau^\nu_R <T\}\right)\\
		&\le \P\left( \|\mf^{\nu,R} \|_{C^\gamma([0,T];H^{-s-2})} > C_{\delta,\gamma,T}\right) + \P\left( \tau^\nu_R <T\right).
		\end{split}
	\end{align}
    For fixed $p\in(0,1)$, by \cref{lem:sup_by_stoch_Gronwall} we have for some $C_{p,T}>0$
	\begin{align}
        \begin{split}\label{eq:bdd_stopping_time}
		\sup_{\nu\in (0,1]}\P \left( \tau^\nu_R < T \right)
		&\le  \sup_{\nu\in (0,1]}\P \left( \sup_{t\in[0,T]} \|\mf^\nu \|_{\dot H^{-s}}\ge R \right) \\
		&\le R^{-2p} \, \sup_{\nu\in (0,1]}\E\sup_{t\in[0,T]} \|\mf^\nu_t\|^{2p}_{\dot H^{-s}} \le C_{p,T} R^{-2p}.
        \end{split}
	\end{align}
	Moreover, by \cref{lem:stopped_Holder_continuity} we have for some $\hat C_{\gamma,p, T} >0$
	\begin{align}\label{eq:second_prob_bound}
		\begin{split}
		\sup_{\nu \in (0,1]} &\P\left( \|\mf^{\nu,R} \|_{C^\gamma([0,T];H^{-s-2})} > C_{\delta,\gamma,T}\right) \\
		&\le C_{\delta,\gamma,T}^{-1}
		\sup_{\nu \in (0,1]} \E \left[ \|\mf^{\nu,R} \|_{C^\gamma([0,T];H^{-s-2})} \right]\\
		&\le C_{\delta,\gamma,T}^{-1} \hat C_{\gamma,p, T} R.
		\end{split}
	\end{align}
	Inserting \eqref{eq:bdd_stopping_time} and \eqref{eq:second_prob_bound} into \eqref{eq:splitted_prob}, we obtain
	\begin{equation*}
		\sup_{\nu \in (0,1]}\P \left( \|\mf^{\nu} \|_{C^\gamma([0,T];H^{-s-2})} > C_{\delta,\gamma,T} \right) \le \tilde C_{p,T} R^{-2p} + C_{\delta,\gamma,T}^{-1} \hat C_{\gamma,p, T} R.
	\end{equation*}
	To conclude, we first choose a large enough $R>0$ such that $\tilde C_{p,T} R^{-2p} < \delta/2$ and then a large enough $C_{\delta,\gamma,T}$ so that $C_{\delta,\gamma,T}^{-1} \hat C_{\gamma,p, T} R < \delta/2$.	
\end{proof}

In order to prove the tightness for the family $(\mf^\nu)_{\nu>0}$, we rely on a compactness result with weighted Sobolev spaces, which we need here because we work on the whole space $\R^d$. We take the smooth weight $w(x)=(1+|x|^2)^{-d/2-1}$ and, for $s\in \R$, we introduce the weighted Sobolev space $H^s_w$ defined as the closure of $C^\infty_c(\R^d)$ with respect to the norm
\begin{equation*}
\|f\|_{H^s_w} \coloneqq \|fw \|_{H^s}.
\end{equation*}
Properties of these weighted spaces can be found for example in \cite[Lemma A.4]{BagnaraGaleatiMaurelli2024}. In particular, we have the following lemma:

\begin{lemma} \label{lem:compact_embedding}
	For every $r, m\in \R$ with $r\ge m$, $T>0$, $\gamma>0$ and $\eps>0$, the following embedding is compact
	\begin{equation*}
		L^\infty([0,T];H^r) \cap C^\gamma([0,T];H^m)  \hookrightarrow C([0,T];H^{r-\eps}_w)
	\end{equation*}
\end{lemma}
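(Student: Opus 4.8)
The plan is to combine a Rellich--Kondrachov type compactness in the spatial variable (which on the whole space $\R^d$ genuinely needs the decaying weight $w$) with an Arzel\`a--Ascoli argument in the time variable, the equicontinuity being supplied by interpolating the uniform H\"older bound in $H^m$ against the uniform bound in $H^r$.

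First I would record the two elementary facts about the weighted spaces that are used. Since $w(x)=(1+|x|^2)^{-d/2-1}$ is smooth with all derivatives bounded, multiplication by $w$ is a bounded operator on $H^\sigma$ for every $\sigma\in\R$, so $\|h\|_{H^\sigma_w}\lesssim \|h\|_{H^\sigma}$, the embedding $H^\sigma\hookrightarrow H^\sigma_w$ is continuous, and $H^{\sigma_1}_w\hookrightarrow H^{\sigma_2}_w$ whenever $\sigma_1\ge\sigma_2$. The key spatial ingredient is that for every $\sigma\in\R$ and every $\delta>0$ the embedding $H^\sigma(\R^d)\hookrightarrow H^{\sigma-\delta}_w(\R^d)$ is \emph{compact}; this is exactly the content of \cite[Lemma A.4]{BagnaraGaleatiMaurelli2024} (morally: local Rellich compactness on balls, plus the decay of $w$ at infinity, upgrades the non-compact global Sobolev embedding to a compact one).

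Next I would take a bounded sequence $(f_n)_n$ in $L^\infty([0,T];H^r)\cap C^\gamma([0,T];H^m)$, and set $A:=\sup_n\|f_n\|_{L^\infty_t(H^r)}+\sup_n\|f_n\|_{C^\gamma_t(H^m)}<\infty$. If $r>m$, I fix $\eps'\in(0,\eps\wedge(r-m))$ and $\lambda:=\eps'/(r-m)\in(0,1)$, so that $r-\eps'=(1-\lambda)m+\lambda r$, and use the interpolation inequality for Sobolev norms to get, uniformly in $n$,
\begin{equation*}
\|f_n(t)-f_n(s)\|_{H^{r-\eps'}}\le \|f_n(t)-f_n(s)\|_{H^m}^{1-\lambda}\,\|f_n(t)-f_n(s)\|_{H^r}^{\lambda}\lesssim_A |t-s|^{\gamma(1-\lambda)}.
\end{equation*}
Composing with the continuous embedding $H^{r-\eps'}\hookrightarrow H^{r-\eps}_w$, the family $(f_n)_n$ is uniformly bounded and uniformly $\gamma(1-\lambda)$-H\"older as a family of maps $[0,T]\to H^{r-\eps}_w$; in the degenerate case $r=m$ this is immediate from $C^\gamma([0,T];H^m)=C^\gamma([0,T];H^r)\hookrightarrow C^\gamma([0,T];H^{r-\eps}_w)$. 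In either case $(f_n)_n$ is equicontinuous in $C([0,T];H^{r-\eps}_w)$.

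Finally I would invoke Arzel\`a--Ascoli for Banach-space valued continuous functions: besides the equicontinuity just established, it suffices to check that for each fixed $t\in[0,T]$ the set $\{f_n(t):n\in\N\}$ is relatively compact in $H^{r-\eps}_w$; but this set is bounded in $H^r$ by $A$, hence relatively compact in $H^{r-\eps}_w$ by the compact embedding recalled above. Consequently $(f_n)_n$ has a subsequence converging in $C([0,T];H^{r-\eps}_w)$, which is exactly the asserted compactness of the inclusion. The only non-routine point is the spatial step $H^r\hookrightarrow H^{r-\eps}_w$, i.e.\ the need for the weight: on $\T^d$ this would reduce to the plain Rellich theorem, whereas on $\R^d$ compactness fails without the decay of $w$; everything else is the standard interpolation plus Arzel\`a--Ascoli scheme.
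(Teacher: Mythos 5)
Your argument is correct and is essentially the same route the paper intends: the paper gives no proof of its own but refers to the analogous \cite[Lemma 3.5]{BagnaraGaleatiMaurelli2024}, which is precisely this scheme of compact weighted spatial embedding $H^\sigma\hookrightarrow H^{\sigma-\delta}_w$ (their Lemma A.4), interpolation of the $C^\gamma_t(H^m)$ bound against the $L^\infty_t(H^r)$ bound to get equicontinuity at an intermediate index, and vector-valued Arzel\`a--Ascoli. One small slip to fix: with $\lambda=\eps'/(r-m)$ you get $(1-\lambda)m+\lambda r=m+\eps'$, not $r-\eps'$; the correct choice is $\lambda=(r-m-\eps')/(r-m)$, after which the H\"older exponent is $\gamma\eps'/(r-m)>0$ and the rest of your argument goes through unchanged.
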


The proof is completely analogous to that of \cite[Lemma 3.5]{BagnaraGaleatiMaurelli2024}.
As a consequence, we gain the tightness of the viscous approximations. For convenience, we work with the series representation of the Wiener process (from \cref{lem:noise_series}): for given $\sigma_k$ orthonormal basis of $H^{\frac d2 + \alpha}_{\div}$, we have $W^\cK = \sum_k \sigma_k W^k$ with $W^k$ independent real $(\mathcal{F}_t)_t$-Brownian motions.

\begin{lemma}\label{lem:tightness} 
The family of laws of $\left((\mf^\nu, (W^k)_{k\ge 1})\right)_{\nu \in (0,1]}$ is tight in $C([0,T]; \allowbreak H^{-s-\eps}_w)\allowbreak \times C([0,T];\R)^\N$.
\end{lemma}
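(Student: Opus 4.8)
The plan is to reduce tightness on the product space to tightness of the two marginal families, and then treat each marginal with the uniform bounds already in hand.

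First I would recall the elementary fact that a family of Borel probability measures on a countable product of Polish spaces is tight if and only if each family of push-forwards under the coordinate projections is tight: if $K_1$ and $K_2$ are compact sets in the two factors capturing mass $\ge 1-\delta/2$ uniformly, then $K_1\times K_2$ is compact in the product and captures mass $\ge 1-\delta$. Hence it suffices to prove (i) tightness of the laws of $(\mf^\nu)_{\nu\in(0,1]}$ in $C([0,T];H^{-s-\eps}_w)$ and (ii) tightness of the laws of $\big((W^k)_{k\ge1}\big)_{\nu\in(0,1]}$ in $C([0,T];\R)^\N$, the dependence between the two components being irrelevant for this reduction.

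Part (ii) is immediate: by the series representation of \cref{lem:noise_series}, for every $\nu$ the sequence $(W^k)_{k\ge1}$ is the \emph{same} sequence of independent standard real Brownian motions, so its law on the Polish space $C([0,T];\R)^\N$ does not depend on $\nu$, and a single Borel probability measure on a Polish space is tight. For part (i) I would produce, for each $\delta>0$, a compact set $\mathcal{K}_\delta\subset C([0,T];H^{-s-\eps}_w)$ with $\sup_{\nu\in(0,1]}\P(\mf^\nu\notin\mathcal{K}_\delta)\le\delta$. Since $\dot H^{-s}\hookrightarrow H^{-s}$ continuously (as $\langle\xi\rangle\ge|\xi|$ and $-2s<0$), the bound \eqref{eq:sup_time_nu} of \cref{lem:sup_by_stoch_Gronwall} together with Markov's inequality yields $R_1>0$ with $\sup_\nu\P\big(\|\mf^\nu\|_{L^\infty([0,T];H^{-s})}>R_1\big)\le\delta/2$, and \cref{cor:uniform_Holder_continuity} (for any fixed $\gamma\in(0,1/2)$) yields $R_2>0$ with $\sup_\nu\P\big(\|\mf^\nu\|_{C^\gamma([0,T];H^{-s-2})}>R_2\big)\le\delta/2$. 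Taking $\mathcal{K}_\delta$ to be the closure in $C([0,T];H^{-s-\eps}_w)$ of $\{f:\|f\|_{L^\infty([0,T];H^{-s})}\le R_1,\ \|f\|_{C^\gamma([0,T];H^{-s-2})}\le R_2\}$, the compact embedding $L^\infty([0,T];H^{-s})\cap C^\gamma([0,T];H^{-s-2})\hookrightarrow C([0,T];H^{-s-\eps}_w)$ of \cref{lem:compact_embedding} (applied with $r=-s\ge m=-s-2$) makes $\mathcal{K}_\delta$ compact, while a union bound gives $\sup_\nu\P(\mf^\nu\notin\mathcal{K}_\delta)\le\delta$. This proves (i), and combining with (ii) and the product reduction concludes the proof.

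I do not expect a genuine obstacle here: this is the standard Kolmogorov/Aldous compactness scheme, and all three quantitative inputs — the uniform $L^\infty_t(\dot H^{-s})$ bound, the uniform-in-probability $C^\gamma_t(H^{-s-2})$ bound, and the weighted compact embedding — are already available. The only points requiring a little care are the passage to the weighted space $H^{-s-\eps}_w$ (necessitated by working on $\R^d$ rather than a torus), for which \cref{lem:compact_embedding} is tailor-made, and the trivial observation that the same Brownian motions $(W^k)_k$ serve every $\nu$ so that the noise marginal is constant in $\nu$.
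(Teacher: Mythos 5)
Your argument is correct and follows the paper's proof essentially verbatim: reduce to the marginal $(\mf^\nu)_\nu$ since the Brownian motions $(W^k)_k$ do not depend on $\nu$, then combine the uniform bound \eqref{eq:sup_time_nu} (via Markov) with \cref{cor:uniform_Holder_continuity} and the compact embedding of \cref{lem:compact_embedding} to produce the compact sets in $C([0,T];H^{-s-\eps}_w)$. The only difference is that you spell out the product-space reduction and the closure of the bounded set explicitly, which the paper leaves implicit.
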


\begin{proof}
	Since $(W^k)_{k\ge 1}$ does not depend on $\nu$, it is enough to prove the tightness of $(\mf^\nu)_{\nu \in (0,1]}$ in $C([0,T]; H^{-s-\eps}_w)$.
	By \cref{lem:compact_embedding}, bounded sets in $L^\infty([0,T];H^{-s}) \cap C^\gamma([0,T];H^{-s-2})$ are compact in $C([0,T];H^{-s-\eps}_w)$. As a consequence, uniform in $\nu\in(0,1]$ boundedness in probability in $L^\infty([0,T];H^{-s}) \cap C^\gamma([0,T];H^{-s-2})$ implies the desired tightness. This uniform boundedness follows easily by \cref{lem:sup_by_stoch_Gronwall}, via Markov inequality, and \cref{cor:uniform_Holder_continuity}.
\end{proof}

\subsection{Passage to the limit}
Thanks to the tightness shown in the previous \cref{lem:tightness}, we are in the position to apply the classical Prokhorov and Skorokhod's machinery to show weak existence for the passive vector equation \eqref{eq:KK}, by passing to the limit as $\nu\to 0$ in a new probability space.

By Prokhorov's thereom, the tightness in \cref{lem:tightness} implies the existence of a sequence $\nu^n \to 0$ (as $n \to \infty$) such that the family of laws $\left((\mf^{\nu_n}, (W^k)_{k\ge 1})\right)_{n\in \N}$ converges weakly in the sense of probabilities. Then, by Skorokhod's theorem, there exists a new complete probability space $( \tilde \Omega, \tilde \cA, \tilde \P)$ and new random variables $\left(( \tilde \mf^{\nu_n}, (\tilde W^{n,k})_{k\ge 1})\right)_{n\in \N}$, $( \tilde \mf, (\tilde W^k)_{k\ge 1})$ defined therein such that 
\begin{align*}
    &\operatorname{Law}_{\P}\left((\mf^{\nu_n}, (W^k)_{k\ge 1})\right)= \operatorname{Law}_{\tilde \P} \left(( \tilde \mf^{\nu_n}, (\tilde W^{n,k})_{k\ge 1})\right) \quad \text{for every }n\in \N,\\
    &( \tilde \mf^{\nu_n}, (\tilde W^{n,k})_{k\ge 1})\to ( \tilde \mf, (\tilde W^k)_{k\ge 1}) \quad  \tilde \P\text{-a.s.}
\end{align*}
as $C([0,T];H^{-s-\eps}_w)\times C([0,T];\R)^\N$-valued random variables.

We denote by $(\tilde \cG_t)_{t\in[0,T]}$ the filtration generated by $\tilde \mf$, $(\tilde W^k)_{k \ge 1}$ and the $\tilde \P$-null sets, and we define $\tilde \cF_t \coloneqq \cap_{s>t} \tilde \cG_s$; clearly the filtration $(\tilde \cF_t)_{t\in [0,T]}$ satisfies the standard assumption. Analogously, we can construct the filtrations $(\tilde \cG^n_t)_{t\in[0,T]}$ and $(\tilde \cF^n_t)_{t\in[0,T]}$ obtained with the processes $\tilde \mf^{\nu_n}$ and $(\tilde W^{n,k})_{k \ge 1}$.

The following two lemmas are technical but quite classical; hence, we decide to present them omitting their proofs. 

\begin{lemma}\label{lem:Wiener_processes}
	For every $n\in\mathbb{N}$, $(\tilde W^{n,k})_{k\ge 1}$ is a sequence of mutually independent one-dimensional Brownian motions adapted to the filtration $(\tilde \cF^n_t)_{t\in[0,T]}$. Similarly, $(\tilde W^k)_{k\ge 1}$ is a sequence of mutually independent one-dimensional Brownian motions adapted to the filtration $(\tilde \cF_t)_{t\in[0,T]}$.
    %Moreover, for every $k\ge 1$ $\tilde W^{n,k} \to \tilde W^k$ in $C([0,T],\R)$ $\tilde \P$-a.s..
\end{lemma}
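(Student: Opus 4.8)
The plan is to verify the claim through the classical exponential-martingale (L\'evy-type) characterization of a sequence of independent Brownian motions: this is a property of the joint finite-dimensional distributions of $(\mf,(W^k)_k)$, hence is preserved by Skorokhod's equality in law, and then passes to the limit $n\to\infty$ by dominated convergence. First, fix $n\in\N$. Since $\operatorname{Law}_{\tilde\P}\big((\tilde\mf^{\nu_n},(\tilde W^{n,k})_k)\big)=\operatorname{Law}_\P\big((\mf^{\nu_n},(W^k)_k)\big)$, the marginal law of $(\tilde W^{n,k})_{k\ge1}$ on $C([0,T];\R)^\N$ equals that of $(W^k)_{k\ge1}$, so each $\tilde W^{n,k}$ is a one-dimensional Brownian motion and the family is mutually independent. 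To upgrade this to the filtration $(\tilde\cF^n_t)_t$, note that on the original space, for any finite set of indices $k_1<\dots<k_m$ and reals $\lambda_1,\dots,\lambda_m$, the process $\exp\big(i\sum_j\lambda_j W^{k_j}_t+\tfrac12|\lambda|^2 t\big)$ is a complex martingale for the original filtration (to which $\mf^{\nu_n}$ is adapted and the $W^k$ are independent Brownian motions); equivalently, for every $l\in\N$, all $0\le t_1\le\dots\le t_l\le t_{l+1}\le T$ and every bounded continuous $g$ on $\big(H^{-s-\eps}_w\times\R^m\big)^l$,
\begin{align*}
\expt{\Big(e^{\,i\sum_j\lambda_j(W^{k_j}_{t_{l+1}}-W^{k_j}_{t_l})}-e^{-\frac12|\lambda|^2(t_{l+1}-t_l)}\Big)\,g\big((\mf^{\nu_n}_{t_i},(W^{k_j}_{t_i})_j)_{i\le l}\big)}=0.
\end{align*}
The left-hand side depends only on the joint law of $(\mf^{\nu_n},(W^k)_k)$, so the identity holds verbatim with $\expt{\cdot}$, $\mf^{\nu_n}$, $W^{k_j}$ replaced by $\expts[\tilde\P]{\cdot}$, $\tilde\mf^{\nu_n}$, $\tilde W^{n,k_j}$. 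Since such random variables generate $\tilde\cG^n_{t_l}$, this shows $\exp\big(i\sum_j\lambda_j\tilde W^{n,k_j}_t+\tfrac12|\lambda|^2 t\big)$ is a $(\tilde\cG^n_t)$-martingale, hence --- by continuity of its paths --- a $(\tilde\cF^n_t)$-martingale with $\tilde\cF^n_t=\cap_{u>t}\tilde\cG^n_u$; applying L\'evy's characterization to each finite subfamily yields that $(\tilde W^{n,k})_{k\ge1}$ is a sequence of mutually independent $(\tilde\cF^n_t)$-Brownian motions.

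For the limiting processes, each $\tilde W^{n,k}\to\tilde W^k$ $\tilde\P$-a.s.\ in $C([0,T];\R)$; an a.s.\ limit of Brownian motions is a Brownian motion, and mutual independence is preserved under a.s.\ (equivalently weak) convergence, so $(\tilde W^k)_{k\ge1}$ is a sequence of mutually independent Brownian motions. To obtain the Brownian property with respect to $(\tilde\cF_t)_t$, I would let $n\to\infty$ in the tilded version of the displayed identity: the integrand is uniformly bounded, and by the $\tilde\P$-a.s.\ convergences $\tilde\mf^{\nu_n}\to\tilde\mf$ in $C([0,T];H^{-s-\eps}_w)$ and $\tilde W^{n,k}\to\tilde W^k$ in $C([0,T];\R)$, together with the continuity of $g$ and of the increment maps, dominated convergence gives the same identity for $\tilde\mf$ and $(\tilde W^k)_k$; hence $\exp\big(i\sum_j\lambda_j\tilde W^{k_j}_t+\tfrac12|\lambda|^2 t\big)$ is a $(\tilde\cG_t)$-martingale and, by path continuity, a $(\tilde\cF_t)$-martingale. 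L\'evy's characterization applied to finite subfamilies then gives that $(\tilde W^k)_{k\ge1}$ are mutually independent $(\tilde\cF_t)$-Brownian motions, as claimed.

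The only delicate point is the passage to the limit in the last step: one must choose the class of test functionals $g$ compatibly with the mode of convergence supplied by Skorokhod's theorem --- namely continuity on $C([0,T];H^{-s-\eps}_w)$, so that the point evaluations $\tilde\mf^{\nu_n}_{t_i}\to\tilde\mf_{t_i}$ can be inserted into $g$ --- and keep track of the reduction of the countable family of Brownian motions to arbitrary finite subfamilies while moving from the generated filtration $\tilde\cG$ to its right-continuous augmentation $\tilde\cF$. Both are routine but require some care; everything else is a standard instance of the Skorokhod-representation machinery.
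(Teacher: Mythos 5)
The paper deliberately omits the proof of this lemma, describing it as classical, and your argument is exactly the standard one the authors intend: transfer of the exponential-martingale identity through Skorokhod's equality in law, extension from the generated filtration to its right-continuous augmentation via path continuity (backward martingale convergence), and passage to the limit $n\to\infty$ by dominated convergence using the $\tilde\P$-a.s.\ convergences in $C([0,T];H^{-s-\eps}_w)$ and $C([0,T];\R)^{\N}$; the proof is correct. The only point worth making explicit is that in your displayed identity the test function $g$ should be allowed to depend on finitely many coordinates $k$ beyond those appearing in the exponential (equivalently, permit some $\lambda_j=0$), so that the family of random variables used really generates $\tilde\cG^n_{t_l}$ before invoking the monotone class argument.
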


We call
\begin{align*}
    \tilde{W}^{\cK,n} = \sum_{k\ge 1} \sigma_k \tilde W^{n,k},\quad \tilde{W}^\cK = \sum_{k\ge 1} \sigma_k \tilde W^{k}.
\end{align*}
By \cref{lem:noise_series}, $\tilde{W}^{\cK,n}$, $n\in\N$, and $\tilde{W}^\cK$ are Wiener processes with covariance matrix $Q$ given in \eqref{eq:Q_spectrum}.

\begin{lemma}\label{lem:viscous_weak_solutions}
	For every $n\in \N$, $(\tilde \Omega,  \tilde \cA,(\tilde \cF^n_t)_t, \tilde \P,\tilde{W}^{\cK,n}, \tilde \mf^{\nu_n})$ is a weak solution to the viscous model \eqref{eq:SPDE_viscous}. 
\end{lemma}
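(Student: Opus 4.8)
This is the classical ``identification of the limit'' step of a Prokhorov--Skorokhod argument. Fix $n\in\N$. First I would collect the properties of $\tilde\mf^{\nu_n}$ that transfer automatically from equality in law. By \cref{prop:wellposed_viscous}, $\mf^{\nu_n}$ is $\P$-a.s.\ $\dot H^{-s}_{\div}$-valued with paths in $C_t(\dot H^{-s}_{\div})\cap L^2_t(\dot H^{-s+1}_{\vee,\div})$ and $\E\sup_{t\in[0,T]}\|\mf^{\nu_n}_t\|^2_{\dot H^{-s}}<\infty$; since $\dot H^{-s}_{\div}$ (and $C_t(\dot H^{-s}_{\div})$) embeds continuously and injectively, hence Borel-measurably, into $H^{-s-\eps}_w$ (resp.\ $C_t(H^{-s-\eps}_w)$), these are Borel properties of the trajectory and so hold $\tilde\P$-a.s.\ for $\tilde\mf^{\nu_n}$. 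By construction $\tilde\mf^{\nu_n}$ is continuous and $(\tilde\cF^n_t)_t$-adapted, hence progressively measurable, and by \cref{lem:Wiener_processes} and \cref{lem:noise_series} $\tilde W^{\cK,n}=\sum_k\sigma_k\tilde W^{n,k}$ is an $(\tilde\cF^n_t)_t$-Wiener process with covariance $Q$; by \cref{cor:stoch_int_bd} the It\^o integral $\int_0^\cdot B[\tilde\mf^{\nu_n}_r]\,d\tilde W^{\cK,n}_r$ is then a well-defined $\dot H^{-s-1}_{\div}$-valued process on the new space. It remains to prove that the integral identity \eqref{eq:KK_def}, with $(\nu_n+c_0/2)\Delta$ in place of $(c_0/2)\Delta$ as in \eqref{eq:SPDE_viscous}, holds $\tilde\P$-a.s.

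The heart of the matter --- and the main obstacle --- is that the stochastic integral is \emph{not} a pathwise-continuous functional of $(\mf^{\nu_n},(W^k)_k)$, so it cannot be transferred directly. I would resolve this by the classical device of writing it as a limit of genuinely measurable functionals. For $m\ge1$, with the dyadic partition $\pi_m=\{t^m_j=jT2^{-m}\}_{j=0}^{2^m}$, set
\[
    J^m_t(f,(w^k)_k):=-\sum_{j=0}^{2^m-1}\sum_{k=1}^{m}B[f_{t^m_j}]\,\sigma_k\,\big(w^k_{t^m_{j+1}\wedge t}-w^k_{t^m_j\wedge t}\big),
\]
a Borel map with values in $H^{-s-1}_{\div}$ on $\{f\in C_t(\dot H^{-s}_{\div})\}\times C_t(\R)^\N$ (the truncation $k\le m$ is needed because $B[f_{t^m_j}]$ acts only on directions in $H^{d/2+\alpha}_{\div}$, cf.\ \cref{lem:B_is_well_defined}). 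Using \cref{cor:stoch_int_bd} to bound first the series tail $\sum_{k>m}$, and then, for fixed truncation, the time-discretization error of the It\^o integral of the continuous adapted integrand $r\mapsto B[\mf^{\nu_n}_r]$ (continuous in $\HS(H^{d/2+\alpha}_{\div};\dot H^{-s-1}_{\div})$ by \cref{lem:HS_homogeneous}), one obtains $J^m_\cdot(\mf^{\nu_n},(W^k)_k)\to\int_0^\cdot B[\mf^{\nu_n}_r]\,dW^\cK_r$ in probability in $C_t(H^{-s-1})$, and $\P$-a.s.\ along a subsequence. Letting $Z_t$ be the residual of \eqref{eq:SPDE_viscous} for $\mf^{\nu_n}$ (which vanishes $\P$-a.s.\ by \cref{prop:wellposed_viscous}) and $Z^m_t$ its analogue with $J^m_t(\mf^{\nu_n},\cdot)$ replacing the stochastic integral (the remaining terms being continuous functionals of the $\dot H^{-s}_{\div}$-valued path $r\mapsto\mf^{\nu_n}_r$), we get $Z^m\to Z=0$ $\P$-a.s.\ along that subsequence, hence in probability.

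Finally I would transfer: define $\tilde Z^m_t$ on $\tilde\Omega$ by applying the same functional to $(\tilde\mf^{\nu_n},(\tilde W^{n,k})_k)$. Since $J^m$ and the other terms are Borel and the joint laws of $(\mf^{\nu_n},(W^k)_k)$ and $(\tilde\mf^{\nu_n},(\tilde W^{n,k})_k)$ coincide, $Z^m$ and $\tilde Z^m$ have the same law, so $\tilde Z^m\to0$ in probability. On the other hand, the estimates above apply verbatim on the new space (as $\tilde W^{\cK,n}$ is an $(\tilde\cF^n_t)_t$-Wiener process and $\tilde\mf^{\nu_n}$ is adapted), giving $J^m_\cdot(\tilde\mf^{\nu_n},(\tilde W^{n,k})_k)\to\int_0^\cdot B[\tilde\mf^{\nu_n}_r]\,d\tilde W^{\cK,n}_r$ in probability, whence $\tilde Z^m\to\tilde Z$ in probability, $\tilde Z$ being the residual of \eqref{eq:SPDE_viscous} for $\tilde\mf^{\nu_n}$. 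Comparing the two limits yields $\tilde Z\equiv0$ $\tilde\P$-a.s., which together with the first step shows that $(\tilde\Omega,\tilde\cA,(\tilde\cF^n_t)_t,\tilde\P,\tilde W^{\cK,n},\tilde\mf^{\nu_n})$ is a weak solution of \eqref{eq:SPDE_viscous}.

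An entirely equivalent route, which is what is usually done in the literature, is to characterize $\int_0^\cdot B[\tilde\mf^{\nu_n}_r]\,d\tilde W^{\cK,n}_r$ through a martingale/L\'evy characterization: one shows that $\tilde N_t:=\tilde\mf^{\nu_n}_t-\mf_0-(\nu_n+c_0/2)\int_0^t\Delta\tilde\mf^{\nu_n}_r\,dr$ is a continuous local $(\tilde\cF^n_t)_t$-martingale with quadratic variation $\int_0^t\sum_kB[\tilde\mf^{\nu_n}_r]\sigma_k\otimes B[\tilde\mf^{\nu_n}_r]\sigma_k\,dr$ and cross variation $-\int_0^\cdot B[\tilde\mf^{\nu_n}_r]\sigma_k\,dr$ with each $\tilde W^{n,k}$ --- all of these being equalities of expectations of bounded continuous functionals of the joint path up to time $u$, hence transferring directly from the original space --- and then applies a martingale representation theorem as in \cite{DaPratoZabczyk2014}. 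In either approach, beyond the obstacle isolated above the only remaining work is routine measurability bookkeeping (Borel-measurability of $f\mapsto f_t$ on $\{f\in C_t(\dot H^{-s}_{\div})\}\subset C_t(H^{-s-\eps}_w)$, and the observation that the error bounds in the second step depend only on the law of $\mf^{\nu_n}$); this, and the fact that the argument is essentially identical to the one needed in \cref{sec:Proof_Existence} when passing to the limit $\nu_n\to0$, is presumably why the authors state \cref{lem:viscous_weak_solutions} without proof.
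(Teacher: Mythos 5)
The paper does not actually prove this lemma: the authors explicitly omit it as ``technical but quite classical,'' so there is no written argument to compare against. Your proposal correctly supplies exactly the classical identification argument the authors are alluding to (transfer of pathwise/Borel properties by equality of laws, identification of the stochastic integral either via discretized, truncated integral functionals and convergence in probability to the deterministic limit $0$, or via the martingale-characterization route), and the points you defer as routine --- Borel measurability of evaluation maps and of the subsets of $C_t(H^{-s-\eps}_w)$ involved, and the uniform-in-truncation discretization bound, which follows from $\E\sup_{t\in[0,T]}\|\mf^{\nu_n}_t\|^2_{\dot H^{-s}}<\infty$ --- are indeed routine.
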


In the following lemma, we show weak existence for the inviscid SPDE by passing to the limit as $n\to \infty$, closing the proof of existence in \cref{thm:main}:

\begin{theorem}
	$(\tilde \Omega,  \tilde \cF,(\tilde \cF_t)_t,\tilde \P, \tilde W^\cK, \tilde \mf)$ is a weak solution to \eqref{eq:KK}, satisfying $\tilde\mf\in L^\infty_t(H^{-s})$ $\P$-a.s. and the bound \eqref{eq:reg_gain}.
\end{theorem}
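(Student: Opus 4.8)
The plan is to pass to the limit $n\to\infty$ in the weak formulation \eqref{eq:KK_def} satisfied by the viscous approximations $\tilde\mf^{\nu_n}$ (via \cref{lem:viscous_weak_solutions}), using the $\tilde\P$-a.s.\ convergence $\tilde\mf^{\nu_n}\to\tilde\mf$ in $C([0,T];H^{-s-\eps}_w)$ together with the uniform bounds of \cref{lem:sup_by_stoch_Gronwall}. First I would upgrade the regularity of $\tilde\mf$: since the laws of $\tilde\mf^{\nu_n}$ and $\mf^{\nu_n}$ coincide, the bound \eqref{eq:sup_time_nu} (for fixed $p\in(0,1)$) is inherited by $\tilde\mf^{\nu_n}$, and by lower semicontinuity of the $L^\infty_t(\dot H^{-s})$-norm under the (weak-$*$) convergence coming from Skorokhod, $\tilde\mf\in L^\infty_t(\dot H^{-s})$ $\tilde\P$-a.s.; likewise, extracting a further weakly convergent subsequence in $L^2_{t,\omega}(\dot H^{-s+1-\alpha})$ (guaranteed by \eqref{eq:reg_gain_nu}), the weak limit is identified with $\tilde\mf$ and Fatou/weak lower semicontinuity yields $\tilde\mf\in L^\infty_t(L^2_\omega(\dot H^{-s}))\cap L^2_{t,\omega}(\dot H^{-s+1-\alpha})$. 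The bound \eqref{eq:reg_gain} then follows by taking $n\to\infty$ in \eqref{eq:reg_gain_nu}, using weak lower semicontinuity of the norms on the left-hand side and dropping the (nonnegative) viscous term $2\nu_n\int_0^T\|\tilde\mf^{\nu_n}_r\|^2_{\dot H^{-s+1}}\dd r$.

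Next I would pass to the limit term-by-term in
\begin{equation*}
    \tilde\mf^{\nu_n}_t = \mf_0 - \int_0^t B[\tilde\mf^{\nu_n}_r]\dd\tilde W^{\cK,n}_r + \Big(\nu_n+\frac{c_0}{2}\Big)\int_0^t\Delta\tilde\mf^{\nu_n}_r\dd r,
\end{equation*}
tested against a fixed $\varphi\in\cS_{\div}$ (or a countable dense family thereof), so that all objects become real-valued continuous processes. The left-hand side converges $\tilde\P$-a.s.\ by the Skorokhod convergence in $C([0,T];H^{-s-\eps}_w)$ (note $\varphi w^{-1}\in\cS$, so pairing against $\varphi$ is continuous on $H^{-s-\eps}_w$). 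For the drift, $\nu_n\int_0^t\langle\Delta\tilde\mf^{\nu_n}_r,\varphi\rangle\dd r=\nu_n\int_0^t\langle\tilde\mf^{\nu_n}_r,\Delta\varphi\rangle\dd r\to 0$ by \eqref{eq:reg_gain_nu} and Cauchy–Schwarz, while $\frac{c_0}{2}\int_0^t\langle\tilde\mf^{\nu_n}_r,\Delta\varphi\rangle\dd r\to\frac{c_0}{2}\int_0^t\langle\tilde\mf_r,\Delta\varphi\rangle\dd r$ using the a.s.\ convergence in $C([0,T];H^{-s-\eps}_w)$ and dominated convergence (the integrand is bounded a.s.\ uniformly in $n$ thanks to the $L^\infty_t(\dot H^{-s})$ bound along a subsequence, or more simply by the $H^{-s-\eps}_w$ convergence). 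For the stochastic integral, I would show convergence of $\int_0^t\langle B[\tilde\mf^{\nu_n}_r]\dd\tilde W^{\cK,n}_r,\varphi\rangle = \sum_k\int_0^t\langle B[\tilde\mf^{\nu_n}_r]\sigma_k,\varphi\rangle\dd\tilde W^{n,k}_r$ to the corresponding object with $\tilde\mf$, $\tilde W^k$: by \cref{lem:HS_homogeneous} the integrand $r\mapsto\langle\F B[\tilde\mf^{\nu_n}_r]\cdot,\hat\varphi\rangle$ is controlled in $\HS(H^{d/2+\alpha}_{\div};\R)$ by $C_\varphi\|\tilde\mf^{\nu_n}_r\|_{H^{-s}}$, uniformly in $n$ by \eqref{eq:sup_time_nu}, and $B$ is linear and continuous from $H^{-s}_{\div}$ (here I would rather use the $H^{-s-\eps}_w$-type continuity, or localize with the stopping times $\tau_R^{\nu_n}$), so one can invoke a standard stability lemma for Itô integrals under joint a.s.\ convergence of integrands and Brownian motions (e.g.\ \cite[Lemma 2.1]{DebusscheGlattHoltzTemam2011}-type results, or a direct argument via Vitali/uniform integrability from the moment bound), to get convergence in probability (hence a.s.\ along a subsequence) of the stochastic integrals.

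The main obstacle I anticipate is precisely the passage to the limit in the stochastic integral, because $B[\tilde\mf^{\nu_n}]$ is only linear-order in a negative Sobolev norm and the convergence $\tilde\mf^{\nu_n}\to\tilde\mf$ is in the weak/weighted topology $C([0,T];H^{-s-\eps}_w)$, which does not a priori commute with the Fourier-side expression $\F B[\cdot]$. The clean route is to combine (i) the uniform (in $n$) bound $\sup_n\E\sup_{t}\|\tilde\mf^{\nu_n}_t\|_{\dot H^{-s}}^{2p}<\infty$ from \eqref{eq:sup_time_nu}, which gives uniform integrability, with (ii) the identification of the limit $\tilde\mf$ as an $L^2_t(\dot H^{-s})$-valued (in fact $L^\infty_t$) process, and (iii) the continuity of $B[\cdot]$ as an operator into $\HS(H^{d/2+\alpha}_{\div};\dot H^{-s-1})$ from \cref{cor:stoch_int_bd}, so that $\langle B[\tilde\mf^{\nu_n}_r]\sigma_k,\varphi\rangle\to\langle B[\tilde\mf_r]\sigma_k,\varphi\rangle$ for a.e.\ $r$ along a subsequence (using that weak-$*$ convergence in $\dot H^{-s}$ plus the bound forces $H^{-s-1}$-strong convergence of $B[\tilde\mf^{\nu_n}_r]\sigma_k$ after testing against $\varphi$), and then upgrade to convergence of the Itô integrals by the martingale (Doob + BDG) estimate applied to the difference. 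Finally, once \eqref{eq:KK_def} is verified for $\tilde\mf$ against a dense countable set of test functions, one concludes the equality in $H^{-s-2}_{\div}$ for all $t$ by density and continuity, and checks $\tilde\mf\in L^2_t(\dot H^{-s}_{\div})\cap C_t(H^{-s-2}_{\div})$ $\tilde\P$-a.s.\ (the latter from \cref{cor:stoch_int_bd} applied to $\tilde\mf$ itself), so that $(\tilde\Omega,\tilde\cA,(\tilde\cF_t)_t,\tilde\P,\tilde W^\cK,\tilde\mf)$ is indeed a solution in the sense of \cref{def:sol}, satisfying the additional integrability and the estimate \eqref{eq:reg_gain}.
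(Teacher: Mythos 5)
Your overall scheme --- test the viscous equations from \cref{lem:viscous_weak_solutions} against smooth test functions, pass each term to the limit using the Skorokhod a.s.\ convergence, identify the limit by density, and recover the a.s.\ $L^\infty_t(\dot H^{-s})$ bound and \eqref{eq:reg_gain} by lower semicontinuity from \eqref{eq:sup_time_nu} and \eqref{eq:reg_gain_nu} --- is the same as the paper's, and those parts are fine. The gap is exactly at the point you flag: the stochastic integral. Any stability lemma of the kind you invoke (the paper uses \cite[Lemma 4.3]{bagnara2023no}) requires convergence in probability of $\int_0^T\sum_{k\ge1}\big|\langle B[\tilde\mf^{\nu_n}_r-\tilde\mf_r]\sigma_k,\varphi\rangle\big|^2\dd r$ to $0$, i.e.\ \emph{strong} $L^2_t$-convergence of the Hilbert--Schmidt-type norm of the difference of integrands. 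Your argument only yields, for each fixed $k$ and a.e.\ $r$, convergence of the scalar $\langle B[\tilde\mf^{\nu_n}_r]\sigma_k,\varphi\rangle$, coming from weak convergence of $\tilde\mf^{\nu_n}_r$ in $\dot H^{-s}$ and boundedness of $B[\cdot]\sigma_k$ (which gives weak, not strong, convergence of $B[\tilde\mf^{\nu_n}_r]\sigma_k$ in $H^{-s-1}$, contrary to what you assert). Termwise convergence plus the uniform bound $\sum_k|\langle B[u]\sigma_k,\varphi\rangle|^2\lesssim_\varphi\|u\|_{H^{-s}}^2$ does not force the sum over $k$ to vanish: that sum is the squared $H^{d/2+\alpha}$-norm of the Riesz representative of $\sigma\mapsto\langle B[u_n]\sigma,\varphi\rangle$, and when $u_n=\tilde\mf^{\nu_n}_r-\tilde\mf_r\rightharpoonup0$ this representative converges only weakly, so the sum may remain of order one. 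Uniform integrability, Vitali, or the stopping times $\tau^{\nu_n}_R$ cure integrability in $\omega$, not this lack of strong spatial convergence; and strong unweighted convergence in $H^{-s}$ or $H^{-s-\eps}$ is not available, because the compactness behind Skorokhod lives only in the weighted space $H^{-s-\eps}_w$.

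The missing ingredient is the spatial localization the paper uses, which exploits that $B$ is a local operator: taking $\varphi\in C_c^\infty$ and $\chi\in C_c^\infty$ with $\chi\equiv1$ on $\supp\varphi$, one has $\langle B[u]\sigma_k,\varphi\rangle=\langle B[\chi u]\sigma_k,\varphi\rangle$, and then \cref{lem:HS_homogeneous} (applied at regularity $-s-\eps-1$) together with \cref{lem:products} gives
\begin{align*}
	\sum_{k\ge1}\big|\langle B[\chi u]\sigma_k,\varphi\rangle\big|^2
	\lesssim \|\varphi\|_{H^{s+\eps+1}}^2\,\|\chi u\|_{H^{-s-\eps}}^2
	\lesssim \|\varphi\|_{H^{s+\eps+1}}^2\,\Big\|\frac{\chi}{w}\Big\|_{H^{d/2+\alpha}}^2\,\|w u\|_{H^{-s-\eps}}^2,
\end{align*}
and the last factor with $u=\tilde\mf^{\nu_n}_r-\tilde\mf_r$ tends to $0$ $\tilde\P$-a.s., uniformly in $r$, precisely because Skorokhod provides strong convergence in $C([0,T];H^{-s-\eps}_w)$; no uniform integrability or stopping-time argument is needed for this step. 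This is the rigorous content of the ``$H^{-s-\eps}_w$-type continuity of $B$'' you gesture at: it holds only in this cut-off, tested form, and without it the limit passage for the stochastic integral does not close.
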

\begin{proof}
	Let $\varphi \in C^\infty_c(\R^d)$, by testing the weak solutions given in \cref{lem:viscous_weak_solutions} with $\varphi$ we obtain
	\begin{equation}\label{eq:viscous_prelimit}
		\langle \tilde \mf^{\nu_n}_t,\varphi\rangle - \langle \tilde  \mf^{\nu_n}_0,\varphi\rangle + \int_0^t \langle  B[\tilde \mf_r^{\nu_n}] \dd \tilde  W^{\cK,n}_r, \varphi\rangle
		= \left( \nu_n +\frac{c_0}{2} \right)\int_0^t \langle  \Delta \tilde \mf^{\nu_n}_r, \varphi \rangle \dd r.
	\end{equation}
	We want to pass every term in \eqref{eq:viscous_prelimit} to the limit $n\to\infty$. We focus only on the stochastic integral, being the other terms very simple to treat. To this end, we aim to apply \cite[Lemma 4.3]{bagnara2023no}, which will supply the convergence 
	\begin{equation} \label{eq:stoch_int_convergence}
		\sup_{t \in [0,T]} \left| \int_0^t \langle  B[\tilde \mf_r^{\nu_n}] \dd \tilde  W^{\cK,n}_r, \varphi\rangle - \int_0^t \langle  B[\tilde \mf_r] \dd \tilde  W^\cK_r, \varphi\rangle \right| \to 0
	\end{equation}
	in probability, provided we are able to show
	\begin{equation}\label{eq:stoch_int_conv_suff} 
		 \int_0^T \sum_{k\ge 1} \left| \langle  B[\tilde \mf_r^{\nu_n}-\tilde \mf_r]\sigma_k, \varphi\rangle  \right|^2 \dd r
		  \to 0
	\end{equation}
	in probability. Let $\chi \in C^\infty_c(\R^d,\R)$ be such that $\chi \equiv 1$ on the support of $\varphi$. To show \eqref{eq:stoch_int_conv_suff}, we apply \cref{lem:HS_homogeneous,lem:products} and get
	\begin{align*}
		\int_0^T &\sum_{k\ge 1} \left| \langle  B[\tilde \mf_r^{\nu_n}-\tilde \mf_r]\sigma_k, \varphi\rangle  \right|^2 \dd r \\
		& = \int_0^T \sum_{k\ge 1} \left| \langle  B[\chi(\tilde \mf_r^{\nu_n}-\tilde \mf_r)]\sigma_k, \varphi\rangle  \right|^2 \dd r \\
		&\le \|\varphi\|^2_{H^{s+\eps+1 }} \int_0^T \| B[\chi(\tilde \mf_r^{\nu_n}-\tilde \mf_r)]\|^2_{\HS(H^{d/2+\alpha}_{\div};H^{-s-\eps-1})} \dd r \\
		& \lesssim \|\varphi\|^2_{H^{s+\eps+1}} \int_0^T \| \chi(\tilde \mf_r^{\nu_n}-\tilde \mf_r)\|^2_{H^{-s-\eps}} \dd r,\\
        & \lesssim \|\varphi\|^2_{H^{s+\eps+1}} \left\|\frac\chi w\right\|^2_{H^{d/2+\alpha}} \int_0^T \| w(\tilde \mf_r^{\nu_n}-\tilde \mf_r)\|^2_{H^{-s-\eps}} \dd r,
	\end{align*}
	with the integral converging to $0$ (as $n\to\infty$) $\tilde \P$-a.s.\ (since $\tilde \mf^{\nu_n}\to  \tilde \mf $  $\tilde \P$-a.s.\ in $C([0,T];H^{-s-\eps}_w)$). This proves \eqref{eq:stoch_int_conv_suff} and so \eqref{eq:stoch_int_convergence}. Up to a subsequence extraction, we can pass to the $\tilde \P$-a.s.\ limit in \eqref{eq:viscous_prelimit} obtaining $\tilde \P$-a.s.
	\begin{equation}\label{eq:tested}
		\langle \tilde \mf_t,\varphi\rangle - \langle \tilde  \mf_0,\varphi\rangle + \int_0^t \langle  B[\tilde \mf_s] \tilde  W^\cK_s, \varphi\rangle 
		=  \frac{c_0}{2} \int_0^t \langle  \Delta \tilde \mf_s, \varphi \rangle \dd r,\quad \forall t\in [0,T],
	\end{equation}
    for every $\varphi \in C^\infty_c(\R^d)$. By a density argument on $\varphi$, we can show \eqref{eq:tested} on a $\tilde\P$-exceptional set independent of $\varphi$, thus proving \eqref{eq:tested}. Finally, by a standard semi-continuity argument, from \eqref{eq:sup_time_nu} we get for $p\in (0,1)$
    \begin{align*}
        \E \left(\sup_{t\in[0,T]} \|\tilde \mf_t\|^{2p}_{\dot H^{-s}}\right)
 		\le c_p \|\mf_0\|^{2p}_{\dot{H}^{-s}}  e^{p\rho_{d,s,\alpha}T},
    \end{align*}
    implying $\tilde \mf\in L^\infty_t(\dot{H}^{-s})$ $\tilde \P$-a.s., and from \eqref{eq:reg_gain_nu} we get \eqref{eq:reg_gain}. The proof is complete.
\end{proof}

\section{Proof of Uniqueness}\label{sec:ProofUniqueness}

In this Section we prove the uniqueness statement in \cref{thm:main}.

We consider the family of smooth kernels $(G_\delta)_{\delta>0}$ given in Fourier modes by
\begin{align*}
    \widehat{G_\delta}(n) = |n|^{-2s}\varphi_\delta(n),
\end{align*}
where $(\varphi_\delta)_{\delta>0}$ is a non-decreasing family of smooth functions with $0\le \varphi_\delta\le 1$ on $\R^d$, $\varphi_\delta(n)=1$ for $\delta\le |n|\le 1/\delta$, $\varphi_\delta(n)=0$ for $|n|\le \delta/2$ and $|n|\ge 2/\delta$, and, for $h=1,2$,
\begin{align}\label{eq:varphi_delta_bd}
|D^{(h)}\varphi_\delta(n)|\lesssim |n|^{-h},\quad \forall n\in\R^d,
\end{align}
uniformly in $\delta>0$ small (for example, take $\rho:[0,\infty)\to [0,1]$ smooth with $\rho=1$ on $[0,1]$, $\rho=0$ on $[2,\infty)$ and take $\varphi_\delta(n)=\rho(\delta(|n|\vee |n|^{-1}))$).

We use a localization argument to deal with integrability in $\omega$. Given $M$ solution to \eqref{eq:KK}, for $R>0$, we call (being $M\in C_t(H^{-s-2})$ $\P$-a.s.)
\begin{align*}
    &\tau_R = \inf\{t\ge 0\mid \|\mf_t\|_{H^{-s-2}}\ge R\} \wedge T,\\
    &\mf^R_t = \mf_{t\wedge \tau_R}.
\end{align*}

\begin{lemma}\label{lem:limit_uniq}
    Assume $d\ge 2$, $s\in (0,d/2)$ and $\alpha\in (0,1)$, let $\mf_0$ be in $\dot{H}^{-s}$. Let $\mf$ be a solution to \eqref{eq:KK} (in the sense of \cref{def:sol}) which is also in $L^2_{t,\omega}(H^{-s}))$. For every $\delta>0$ and $R>0$, we have
    \begin{align}
    \begin{aligned}\label{eq:eq norm approx}
        \E \int_{\R^d} &|\widehat{\mf}^R_t(n)|^2 \widehat{G_\delta}(n) \dd n -\int_{\R^d} |\widehat{\mf}_0(n)|^2 \widehat{G_\delta}(n) \dd n \\
        &= (2\pi)^{-d/2}\E \int_0^{t\wedge \tau_R} \int_{\R^d} \overline{\widehat{\mf}}_r(n) \cdot \mathbb H_\delta(n)\widehat{\mf}_r(n) \dd n \dd r,
    \end{aligned}
    \end{align}
    where
    \begin{align*}
        \mathbb H_\delta(n) &= \mathbb F_{\gamma_\delta}(n) -(2\pi)^{d/2}c_0|n|^{-2s+2}I \\
        &= \int_{\R^d} \langle n-k \rangle^{-d-2\alpha} (\varphi_\delta(k)|k|^{-2s}-\varphi_\delta(n)|n|^{-2s}) |P^\perp_{n-k}k|^2 I \dd k \\
        &\quad +(d-1)\int_{\R^d} \langle n-k \rangle^{-d-2\alpha} \varphi_\delta(k)|k|^{-2s} kk^\top \dd k \\
        &\quad -2\int_{\R^d} \langle n-k \rangle^{-d-2\alpha} \varphi_\delta(k)|k|^{-2s} P^\perp_{n-k} kk^\top \dd k
    \end{align*}
    and $\mathbb F_{\gamma_\delta}$ is defined in \eqref{eq:F_gamma} with $\gamma_\delta(\dd n) =\hat{G}_\delta (n)\dd n$.
\end{lemma}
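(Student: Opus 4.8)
The plan is to recognize \eqref{eq:eq norm approx} as It\^o's formula applied to the quadratic functional $m\mapsto\int_{\R^d}|\widehat m(n)|^2\widehat{G_\delta}(n)\dd n=(2\pi)^{-d/2}\langle G_\delta\ast m,m\rangle$ along the stopped solution $\mf^R$. Since $\widehat{G_\delta}$ is smooth, nonnegative and supported in the compact set $\{\delta/2\le|n|\le2/\delta\}$, bounded away from the origin, the Fourier multiplier $\Theta_\delta$ with symbol $\widehat{G_\delta}(n)^{1/2}$ is a bounded operator from $H^{-s-2}(\R^d)$ into $L^2(\R^d)$, with $\int|\widehat m|^2\widehat{G_\delta}\dd n=\|\Theta_\delta m\|_{L^2}^2$, and likewise $\Theta_\delta\Delta$ is bounded $H^{-s-2}\to L^2$. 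Applying $\Theta_\delta$ to the identity \eqref{eq:KK_def}, which $\mf$ satisfies in $H^{-s-2}_{\div}$ by \cref{def:sol}, and stopping at $\tau_R$, we obtain that $Y_t:=\Theta_\delta\mf^R_t$ is a continuous $L^2$-valued It\^o process, $Y_t=\Theta_\delta\mf_0-\int_0^{t\wedge\tau_R}\Theta_\delta B[\mf_r]\dd W^\cK_r+\tfrac{c_0}{2}\int_0^{t\wedge\tau_R}\Theta_\delta\Delta\mf_r\dd r$, whose finite-variation part lies in $L^1_t(L^2)$ ($\P$-a.s., since $\mf\in C_t(H^{-s-2})$) and whose diffusion lies in $L^2_t(\HS(U_0;L^2))$ ($\P$-a.s., using the bound $\|\cF B[\cdot]\|_{\HS(U_0;L^2_{\gamma_\delta})}\lesssim_\delta\|\cdot\|_{H^{-d/2-\alpha}}$ from \eqref{eq:bound_HS_L2} together with $H^{-s}\hookrightarrow H^{-d/2-\alpha}$). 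Hence the standard Hilbert-space It\^o formula for $\|Y_t\|_{L^2}^2$ applies (see e.g.\ \cite{DaPratoZabczyk2014}).

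It\^o's formula for $\|Y_t\|_{L^2}^2=\int|\widehat{\mf^R_t}|^2\widehat{G_\delta}\dd n$ produces three terms. (i) The martingale term $-2\int_0^{t\wedge\tau_R}\langle Y_r,\Theta_\delta B[\mf_r]\dd W^\cK_r\rangle_{L^2}$ has zero expectation: by Cauchy--Schwarz its quadratic variation is $\lesssim_\delta\int_0^{t\wedge\tau_R}\|\mf_r\|_{H^{-s-2}}^2\,\|\cF B[\mf_r]\|_{\HS(U_0;L^2_{\gamma_\delta})}^2\dd r\lesssim_\delta R^2\int_0^T\|\mf_r\|_{H^{-s}}^2\dd r$ (using $\|\Theta_\delta\cdot\|_{L^2}\lesssim_\delta\|\cdot\|_{H^{-s-2}}$, $\|\mf_r\|_{H^{-s-2}}\le R$ on $[0,\tau_R]$, and $H^{-s}\hookrightarrow H^{-d/2-\alpha}$), which is finite in expectation precisely by the standing hypothesis $\mf\in L^2_{t,\omega}(H^{-s})$. (ii) The drift from $\tfrac{c_0}{2}\Delta\mf$ equals, by Plancherel and $\widehat{G_\delta}(n)=|n|^{-2s}\varphi_\delta(n)$, the quantity $-c_0\int_{\R^d}|n|^2\widehat{G_\delta}(n)|\widehat{\mf_r}(n)|^2\dd n=-c_0\int|n|^{-2s+2}\varphi_\delta(n)|\widehat{\mf_r}|^2\dd n$; writing $|n|^{-2s}|P^\perp_{n-k}n|^2=|n|^{-2s+2}|P^\perp_{n-k}(n/|n|)|^2$, using the rotational invariance of $k\mapsto\langle n-k\rangle^{-d-2\alpha}|P^\perp_{n-k}(n/|n|)|^2$ and the expression \eqref{eq:Q0} for $c_0$, this is exactly the contribution $-(2\pi)^{-d/2}\int_{\R^d}\langle n-k\rangle^{-d-2\alpha}\varphi_\delta(n)|n|^{-2s}|P^\perp_{n-k}k|^2\,I\dd k$ appearing in the expansion of $(2\pi)^{-d/2}\mathbb H_\delta$, i.e.\ the $-(2\pi)^{d/2}c_0|n|^{-2s+2}I$ block (which, written out as in the statement, carries the cutoff $\varphi_\delta(n)$), obtained exactly as the corresponding term was in \eqref{eq:H_def}. (iii) The It\^o correction is $\sum_k\|\Theta_\delta B[\mf_r]\sigma_k\|_{L^2}^2=\|\cF B[\mf_r]\|_{\HS(H^{d/2+\alpha}_{\div};L^2_{\gamma_\delta})}^2$, which by \cref{lem:FB_HS_norm_in_L2} --- applicable since $\widehat{G_\delta}\in\cS$, $\widehat{G_\delta}\ge0$, and $\mf_r\in H^{-s}_{\div}\hookrightarrow H^{-d/2-\alpha}_{\div}$ --- equals $(2\pi)^{-d/2}\int_{\R^d}\mathbb F_{\gamma_\delta}(n)\widehat{\mf_r}(n)\cdot\overline{\widehat{\mf_r}}(n)\dd n$. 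Adding (ii) and (iii) gives $(2\pi)^{-d/2}\int\overline{\widehat{\mf_r}}(n)\cdot\mathbb H_\delta(n)\widehat{\mf_r}(n)\dd n$, and taking expectations --- legitimate since a routine bound gives $|\mathbb H_\delta(n)|\lesssim_\delta\langle n\rangle^{-d-2\alpha}$, so (as $s<d/2$) the right-hand side is $\le C_\delta\,\E\int_0^T\|\mf_r\|_{H^{-s}}^2\dd r<\infty$ --- yields \eqref{eq:eq norm approx}.

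The main point requiring care is (i): the localization at $\tau_R$ only controls the weak norm $\|\mf_\cdot\|_{H^{-s-2}}$, so it must be combined with the standing hypothesis $\mf\in L^2_{t,\omega}(H^{-s})$ (rather than just $\mf\in L^2_t(H^{-s})$ $\P$-a.s., which is all \cref{def:sol} provides) in order to upgrade the stochastic integral to a true martingale and to make the $\mathbb H_\delta$-integral integrable in $\omega$. Everything else is routine: the infinite-dimensional It\^o formula is standard once one has passed to the $L^2$-valued process $Y$, and the only constant-matching needed is the identity $P^\perp_{n-k}k=P^\perp_{n-k}n$ together with the isotropy argument already used to pass from $\mathbb F_{d,s,\alpha}$ to $\mathbb H_{d,s,\alpha}$ in \eqref{eq:H_def}.
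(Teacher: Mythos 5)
Your proof is correct and follows essentially the same route as the paper: It\^o's formula for the regularized quadratic functional (you pass through the square-root multiplier $\Theta_\delta$ to reduce to the standard $L^2$-valued It\^o formula, a step the paper leaves more implicit), \cref{lem:FB_HS_norm_in_L2} for the It\^o correction, and the localization at $\tau_R$ combined with the standing hypothesis $\mf\in L^2_{t,\omega}(H^{-s})$ to make the stochastic integral a true martingale with vanishing expectation. Your identification of the Laplacian drift with the $c_0$-block of $\mathbb H_\delta$ (correctly carrying the cutoff $\varphi_\delta(n)$, as in the expanded expression), via \eqref{eq:Q0} and $P^\perp_{n-k}k=P^\perp_{n-k}n$, is exactly the matching the paper performs.
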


\begin{proof}
    By It\^o formula applied to the functional $\mf\mapsto \langle \mf, G_\delta \ast \mf\rangle = \| \widehat\mf \|^2_{L^2_{\gamma_\delta}}$, we get
    \begin{align}
    \begin{aligned}\label{eq:approx_norm}
        \dd \langle \mf_t, G_\delta \ast \mf_t \rangle &= 2\sum_k \langle B[\mf_t]\sigma_k,G_\delta \ast\mf_t\rangle \dd W^k_t \\
        &\quad +\left(\langle \mf_t, c_0\Delta G_\delta \ast \mf_t \rangle +  \|\cF B[\mf_t]  \|^2_{\HS(H^{d/2+\alpha}_{\div}; L^2_{\gamma_\delta})} \right) \dd t.
    \end{aligned}
    \end{align}
    By \cref{lem:HS_homogeneous}, we have
    \begin{align*}
    	\sum_k \langle B[\mf_t]\sigma_k,G_\delta \ast\mf_t\rangle^2 
    	&\le \|G_\delta \ast\mf_t \|^2_{H^{s+1}} \sum_{k} \| B[\mf_t]\sigma_k \|^2_{H^{-s-1}}\\
    	&\lesssim_\delta \|\mf_t\|^2_{H^{-s-2}} \|\mf_t\|^2_{H^{-s}}.
    \end{align*}
    Since $M$ is in $L^2_{t,\omega}(H^{-s})$, taking the expectation in \eqref{eq:approx_norm} up to the stopping time $\tau_R$, the local martingale term disappears.
    By \cref{lem:FB_HS_norm_in_L2}, we have
    \begin{align}
    \begin{aligned}\label{eq:eq norm approx stopped}
        &\E \int_{\R^d} |\widehat{\mf}^R_t(n)|^2  \widehat{G_\delta}(n) \dd n - \int_{\R^d} |\widehat{\mf}^R_0(n)|^2  \widehat{G_\delta}(n) \dd n \\
        &= \E \int_0^{t\wedge\tau_R} \int_{\R^d} \overline{\widehat{\mf}}_r(n) \cdot \left(-c_0|n|^2\widehat{G_\delta}(n)I +(2\pi)^{-d/2}\mathbb F_{\gamma_\delta}(n) \right) \widehat{\mf}_r(n) \dd n \dd r,
        % &= (2\pi)^{-d/2} \E \int_0^{t\wedge\tau_R} \int_{\R^d} \overline{\widehat{\mf}}_r(n) \cdot \mathbb H_\delta(n) \widehat{\mf}_r(n) \dd n \dd r.
    \end{aligned}
    \end{align}
    that is \eqref{eq:eq norm approx}.
\end{proof} 

\begin{lemma}
    Fix $R>0$. For $d\ge 2$, $s\in (1,d/2)$ and $\alpha\in (0,1)$, for every $\mf$ in $L^2_{t,\omega}(\dot{H}^{-s+1-\alpha}_{\div})$, we have
    \begin{align*}
        \lim_{\delta\to 0} \E \int_0^{t\wedge \tau_R} \int_{\R^d} \overline{\widehat{\mf}}_r(n) &\cdot \mathbb H_\delta(n)\widehat{\mf}_r(n) \dd n \dd r \\
        &= \E \int_0^{t\wedge \tau_R}\int_{\R^d} \overline{\widehat{\mf}}_r(n) \cdot \mathbb H(n)\widehat{\mf}_r(n) \dd n \dd r,
    \end{align*}
    where $\mathbb H = \mathbb H_{d,s,\alpha}$ is defined by \eqref{eq:H_def}.
\end{lemma}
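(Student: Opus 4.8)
The plan is to pass to the limit $\delta\to 0$ inside the expectation and the $(r,n)$-integrals by dominated convergence. Since each $\mf_r$ is divergence-free, $n\cdot\widehat\mf_r(n)=0$ for a.e.\ $n$, so the integrand $\overline{\widehat\mf}_r(n)\cdot\mathbb H_\delta(n)\widehat\mf_r(n)$ only involves the quadratic form $v\cdot\mathbb H_\delta(n)v$ with $v=\widehat\mf_r(n)/|\widehat\mf_r(n)|\in\mathbb{S}^{d-1}$, $v\cdot n=0$. It therefore suffices to prove two facts: (i) $\mathbb H_\delta(n)\to\mathbb H(n)$ for a.e.\ $n\neq 0$; and (ii) a bound $|v\cdot\mathbb H_\delta(n)v|\le g(n)$, uniform in $\delta\in(0,1]$ and in unit vectors $v\perp n$, with $g$ chosen so that $\int_{\R^d}g(n)|\widehat\mf_r(n)|^2\dd n\lesssim\|\mf_r\|^2_{\dot H^{-s+1-\alpha}}$. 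Then $g(n)|\widehat\mf_r(n)|^2$ is an integrable majorant on $[0,T]\times\Omega\times\R^d$ by the hypothesis $\mf\in L^2_{t,\omega}(\dot H^{-s+1-\alpha}_{\div})$, and DCT applies (the stopping time $\tau_R$ only shrinks the time interval of integration and, together with $\mf\in C_t(H^{-s-2})$, guarantees all integrals are finite).

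For (i), write $\mathbb H_\delta(n)$ as the sum of its three $k$-integrals — the transport-type one carrying the difference $\varphi_\delta(k)|k|^{-2s}-\varphi_\delta(n)|n|^{-2s}$, and the stretching- and mixed-type ones carrying $\varphi_\delta(k)|k|^{-2s}$. Since $0\le\varphi_\delta\le1$ and $\varphi_\delta\to1$ pointwise, for every fixed $n\neq 0$ dominated convergence in $k$ applies to each integral, the majorants being the same $k$-integrands with $\varphi_\delta$ replaced by $1$, whose integrability is already contained in the proofs of \cref{lem:FB_HS_norm_in_L2} and \cref{lem:HS_homogeneous}.

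For (ii), take $g(n)=C_{d,s,\alpha}|n|^{-2s+2-2\alpha}$ for $|n|\ge1$ and $g(n)=C_{d,s,\alpha}|n|^{-2s+2}$ for $|n|<1$; since $s+\alpha>1$ one has $g(n)\le C|n|^{-2s+2-2\alpha}$ everywhere, hence $\int g(n)|\widehat\mf_r(n)|^2\dd n\le C\|\mf_r\|^2_{\dot H^{-s+1-\alpha}}$. For $|n|<1$ the bound follows as in the opening lines of the proof of \cref{thm:main_bd_integral}, because the estimate $|\mathbb F_{\gamma_\delta}(n)|\lesssim 1$ from the proof of \cref{lem:HS_homogeneous} is uniform over nonnegative weights dominated by $|n|^{-2s}$, hence over $\delta$. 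For $|n|\ge1$, reduce to the scalar case exactly as in the proof of \cref{thm:main_bd_integral} (legitimate since $\varphi_\delta$ is radial) and split $v\cdot\mathbb H_\delta(n)v=\cI^\delta_{tra}(n)+(d-1)\cI^\delta_{str}(n)-2\cI^\delta_{mix}(n)$ in analogy with \eqref{eq:three_terms}. The stretching term $\cI^\delta_{str}$ is handled by the elementary splitting $|k|\lessgtr|n|/2$ (using $0\le\varphi_\delta\le1$, $v\cdot n=0$ and $s>1$ for convergence of the far tail), giving $|\cI^\delta_{str}(n)|\lesssim|n|^{-2s+2-2\alpha}$. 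The transport term $\cI^\delta_{tra}$ is controlled just as the transport term in the passive-scalar uniqueness argument of \cite{galeati2024anomalous}, uniformly in $\delta$ thanks to $0\le\varphi_\delta\le1$ and $|D^{(h)}\varphi_\delta(n)|\lesssim|n|^{-h}$, $h=1,2$.

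The delicate term is the mixed one $\cI^\delta_{mix}$, for which the naive estimate overshoots by one power of $|n|$; this is the main obstacle. After the substitution $k=n-m$ (using $v\cdot n=0$, the angular factor of the integrand then becomes an \emph{odd} function of the component of $m$ along $n$), I would exploit the reflection $m\mapsto\bar m:=m-2(m\cdot\widehat n)\widehat n$ to rewrite $\cI^\delta_{mix}(n)$ as one half of the integral of that angular factor against the difference $p_\delta(|n-m|)-p_\delta(|n-\bar m|)$, where $p_\delta(r)=\varphi_\delta(r)r^{-2s}$. A mean value estimate with $|p_\delta'(r)|\lesssim r^{-2s-1}$ (uniform in $\delta$) supplies the missing power of $|n|$, and then the splitting $|m|\lessgtr|n|/2$ — using $m_1^2m_2^2/|m|^2\le m_2^2\le\min(|m-n|^2,|\bar m-n|^2)$ to absorb the singularities at $m=\pm n$ — yields $|\cI^\delta_{mix}(n)|\lesssim|n|^{-2s+2-2\alpha}$, uniformly in $\delta$. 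In contrast with the passive-scalar case there is neither a sign nor a single subtraction available here, so this angular near-cancellation, which the Mellin computation in \cref{thm:main_bd_integral} performs automatically, must be extracted by hand — this is precisely where the matrix (rather than scalar) structure of $\mathbb H_\delta$ enters.
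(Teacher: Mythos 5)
Your proposal is correct and shares the paper's overall skeleton (pointwise convergence of the kernel, a bound by $|n|^{-2s+2-2\alpha}$ uniform in $\delta$, then dominated convergence against $\|\mf_r\|^2_{\dot H^{-s+1-\alpha}}$), but you obtain the uniform bound by a genuinely different route. The paper does \emph{not} estimate the three terms of \eqref{eq:three_terms} separately: using $n\cdot\widehat\mf(n)=0$ it packages the whole quadratic form into a single difference kernel $\int\langle n-k\rangle^{-d-2\alpha}\bigl(\varphi_\delta(k)A_{n-k}(k)-\varphi_\delta(n)A_{n-k}(n)\bigr)\dd k$ with $A_{n-k}(\eta)=|\eta|^{-2s}\bigl(|P^\perp_{n-k}\eta|^2I+(d-1)\eta\eta^\top-2P^\perp_{n-k}\eta\eta^\top\bigr)$, splits into the regions $|k|\le|n|/2$, $|n-k|\le|n|/2$ and the remainder, and on the near-diagonal region Taylor-expands $\varphi_\delta A_{n-k}$ to second order around $n$, the first-order term vanishing by odd symmetry of the domain; the bounds \eqref{eq:varphi_delta_bd} with $h=1,2$ close the estimate, and no radiality of $\varphi_\delta$ is needed. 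Your term-by-term treatment also works: the stretching bound is right (the identity $(v\cdot k)^2=(v\cdot(k-n))^2$ is exactly what saves the region $k$ near $n$), and your reflection-plus-mean-value argument for $\cI^\delta_{mix}$ is a genuine alternative to the paper's second-order cancellation, using only $h=1$ in \eqref{eq:varphi_delta_bd} at the harmless price of choosing $\varphi_\delta$ radial. Three points to patch. First, $\widehat\mf_r(n)$ is complex, so a diagonal bound over \emph{real} unit $v\perp n$ does not by itself majorize $|\overline{\widehat\mf}_r(n)\cdot\mathbb H_\delta(n)\widehat\mf_r(n)|$; it does here because the bilinear form of $\mathbb H_\delta(n)$ restricted to $n^\perp$ is symmetric (for $v,w\perp n$ one has $w\cdot P^\perp_{n-k}kk^\top v=(w\cdot k)(v\cdot k)\,\frac{(n-k)\cdot n}{|n-k|^2}$), so polarization upgrades the diagonal bound — state this. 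Second, the transport term is the one piece you outsource, and it is not harmless: a first-order mean-value estimate on $\varphi_\delta(k)|k|^{-2s}-\varphi_\delta(n)|n|^{-2s}$ only yields $|n|^{-2s+1}$ near the diagonal, which fails for $\alpha>1/2$, so the cited scalar argument must supply the same second-order/symmetry cancellation that the paper's Taylor step provides; the reference actually doing this is \cite[Proposition 3.2]{GGM2024} (which the paper follows) rather than \cite{galeati2024anomalous}. Third, for the pointwise convergence it is cleaner to note directly that the majorants with $\varphi_\delta\equiv1$ are integrable in $k$ for each fixed $n\ne 0$ because $s<d/2$ and $s+\alpha>1$, rather than attributing this to \cref{lem:HS_homogeneous}.
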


\begin{proof}
    The proof is similar to the proof of \cite[Proposition 3.2]{GGM2024}, taking into account that $\mf$ is divergence-free.

    Since $n\cdot \widehat{\mf}(n)=0$, we can write
    \begin{align*}
        \overline{\widehat{\mf}}(n) \cdot \mathbb H_\delta(n) \widehat{\mf}(n) = \overline{\widehat{\mf}}(n) \cdot \mathbb K_\delta(n) \widehat{\mf}(n),
    \end{align*}
    where
    \begin{align*}
        &\mathbb K_\delta(n) = \int_{\R^d} \langle n-k \rangle^{-d-2\alpha} (\varphi_\delta(k)A_{n-k}(k)-\varphi_\delta(n)A_{n-k}(n)) \dd k ,\\
        &A_{n-k}(\eta) = |\eta|^{-2s} (|P^\perp_{n-k}\eta|^2I +(d-1)\eta\eta^\top -2P^\perp_{n-k}\eta \eta^\top).
    \end{align*}
    An analogous identity holds for $\mathbb H$ removing $\varphi_\delta$ from the integral (in fact, this is the expression in Fourier modes of the identity \eqref{eq:time_evolution_sss}). By dominated convergence theorem, for every $n\neq 0$, $\mathbb K_\delta(n)$ tends to $\mathbb K(n)$ as $\delta\to 0$. Since $\mf$ is in $L^2_{t,\omega}(\dot{H}^{-s+1-\alpha})$, it is enough to show that $|\mathbb K_\delta(n)| \lesssim |n|^{-2s+2-2\alpha}$ uniformly in $\delta$.

    For $|n|\le 1$, $|\mathbb F_{\gamma_\delta}(n)|\lesssim 1$ (see the proof of \cref{lem:HS_homogeneous}), therefore $|\mathbb{K}_\delta(n)|\lesssim |n|^{-2s+2} \le |n|^{-2s+2-2\alpha}$, as wanted for $|n|\le 1$.

    For $|n|>1$, we split the integral in $\mathbb K_\delta$ into three domains:
    \begin{align*}
        D_1&=\{|n-k|> |n|/2,\,|k|> |n|/2\},\\
        D_2&=\{|n-k|> |n|/2,\,|k|\le |n|/2\},\\
        D_3&=\{|n-k|\le |n|/2\},
    \end{align*}
    and call $\mathbb K_\delta^{(j)}$ the corresponding integral on $D_j$, $j=1,2,3$.
    Since $s>1$ we have $|A_{n-k}(k)|+|A_{n-k}(n)|\lesssim (|k|\wedge |n|)^{-2s+2}$. Therefore, on $D_1$ we get
    \begin{align*}
        |\mathbb K_\delta^{(1)}(n)|
        &\lesssim \int_{|n-k|>|n|/2} \langle n-k \rangle^{-d-2\alpha} |n|^{-2s+2} \dd s \\
        &\lesssim |n|^{-2s+2} |n|^{-2\alpha} = |n|^{-2s+2-2\alpha}.
    \end{align*}
    On $D_2$ we get
    \begin{align*}
        |\mathbb K_\delta^{(2)}(n)|
        &\lesssim \langle n \rangle^{-d-2\alpha} \int_{|k|\le |n|/2} |k|^{-2s+2} \dd s \\
        &\lesssim |n|^{-d-2\alpha} |n|^{d-2s+2} = |n|^{-2s+2-2\alpha}.
    \end{align*}
    On $D_3$, we expand the term $\varphi_\delta A_{n-k}$ around $n$.
    % \begin{align*}
    %     [\varphi_\delta A_{n-k}](k) -[\varphi_\delta A_{n-k}](n) = (k-n)\cdot \nabla[\varphi_\delta A_{n-k}](n) +(k-n)\cdot \int_0^1 D^2[\varphi_\delta A_{n-k}](\theta k+(1-\theta)n) \dd\theta \,(k-n).
    % \end{align*}
    The gradient term $(k-n)\cdot \nabla[\varphi_\delta A_{n-k}](n)$ is an odd function in $n-k$ (and $D_3$ is symmetric in $n-k$), therefore we have
    \begin{align*}
        \int_{D_3} \langle n-k \rangle^{-d-2\alpha} (k-n) \cdot \nabla[\varphi_\delta A_{n-k}](n) \dd k =0.
    \end{align*}
    and so $\mathbb K_\delta^{(3)}(n)$ reads
    \begin{align*}
        \mathbb K_\delta^{(3)}(n) = \int_{D_3} \langle n-k \rangle^{-d-2\alpha} (k-n)\cdot \int_0^1 D^2[\varphi_\delta A_{n-k}](\theta k+(1-\theta)n) \dd\theta \,(k-n) \dd k.
    \end{align*}
    The derivatives of $A_{n-k}$ satisfy, for $h=0,1,2$,
    \begin{align*}
        |D^{(h)} A_{n-k}(\eta)| \lesssim |\eta|^{-2s+2-h},
    \end{align*}
    and so (recall \eqref{eq:varphi_delta_bd}) $|D^2[\varphi_\delta A_{n-k}](\eta)| \lesssim |\eta|^{-2s}$. Hence we get
    \begin{align*}
        |\mathbb K_\delta^{(3)}|
        &\lesssim \int_{|n-k|\le |n|/2} \langle n-k \rangle^{-d-2\alpha} |n-k|^2 \sup_{\theta\in [0,1]} |\theta k+(1-\theta)n|^{-2s} \dd k \\
        &\lesssim |n|^{-2s} \int_{|n-k|\le |n|/2} |n-k|^{-d-2\alpha+2} \dd k \\
        &\lesssim |n|^{-2s} |n|^{-2\alpha+2} = |n|^{-2s+2-2\alpha}.
    \end{align*}
    Putting together the bounds on $\mathbb K_\delta^{(j)}$, $j=1,2,3$, we get that $|\mathbb K_\delta(n)|\lesssim |n|^{-2s+2-2\alpha}$ for $|n|>1$, as wanted. The proof is complete.
\end{proof}

\begin{corollary}
    Assume that $d\ge 3$, $s\in (1,d/2)$ and $\alpha\in (0,1)$ and $\alpha\in (0,1/2)$, let $\mf_0$ be in $\dot{H}^{-s}$, fix $R>0$. Let $\mf$ be a solution to \eqref{eq:KK} (in the sense of \cref{def:sol}) which is also in $L^2_{t,\omega}(\dot{H}^{-s+1-\alpha})$. Then we have
    \begin{align}\label{eq:eq norm}
        \E\|\mf^R_t\|_{\dot{H}^{-s}}^2 -\|\mf_0\|_{\dot{H}^{-s}}^2 = (2\pi)^{-d/2} \E \int_0^{t\wedge \tau_R} \int_{\R^d} \overline{\widehat{\mf}}_t(n) \cdot \mathbb H(n)\widehat{\mf}_t(n) \dd n,
    \end{align}
    where $\mathbb H$ is defined as in \eqref{eq:H_def}; in particular $\mf^R$ is in $L^\infty_t(L^2_\omega(\dot{H}^{-s}))$.
\end{corollary}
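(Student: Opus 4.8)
The plan is to let $\delta\downarrow 0$ in the identity \eqref{eq:eq norm approx} of \cref{lem:limit_uniq}, treating the right-hand side via the lemma immediately preceding this corollary and the left-hand side by monotone convergence; the statement then drops out. First I would check that \cref{lem:limit_uniq} is applicable here: since $s>1$ and $\alpha\in(0,1)$ one has the pointwise bound $\langle n\rangle^{-2s}\lesssim|n|^{-2s+2-2\alpha}$ on $\R^d$, hence $\dot H^{-s+1-\alpha}\hookrightarrow H^{-s}$, and the standing assumption $\mf\in L^2_{t,\omega}(\dot H^{-s+1-\alpha}_{\div})$ forces $\mf\in L^2_{t,\omega}(H^{-s}_{\div})$, which is exactly the hypothesis required there. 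So \eqref{eq:eq norm approx} holds for every $\delta>0$ and $R>0$.

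Next I would pass to the limit $\delta\to0$ term by term. On the left, since $\varphi_\delta\nearrow1$ on $\R^d\setminus\{0\}$ we have $\widehat{G_\delta}(n)=|n|^{-2s}\varphi_\delta(n)\nearrow|n|^{-2s}$, so by the monotone convergence theorem (in $n$ and in $\omega$) the first term increases to $\E\|\mf^R_t\|^2_{\dot H^{-s}}\in[0,+\infty]$ while $\int_{\R^d}|\widehat\mf_0(n)|^2\widehat{G_\delta}(n)\,\dd n$ increases to the finite quantity $\|\mf_0\|^2_{\dot H^{-s}}$. On the right, the preceding lemma gives directly that
\[
\E\int_0^{t\wedge\tau_R}\!\!\int_{\R^d}\overline{\widehat\mf}_r(n)\cdot\mathbb H_\delta(n)\widehat\mf_r(n)\,\dd n\,\dd r\ \longrightarrow\ \E\int_0^{t\wedge\tau_R}\!\!\int_{\R^d}\overline{\widehat\mf}_r(n)\cdot\mathbb H(n)\widehat\mf_r(n)\,\dd n\,\dd r .
\]
I would also record that this limit is finite and bounded uniformly in $t\in[0,T]$: using the divergence-free condition $n\cdot\widehat\mf_r(n)=0$ to replace $\mathbb H$ by the matrix $\mathbb K$ of the preceding lemma, together with its uniform bound $|\mathbb K_\delta(n)|\lesssim|n|^{-2s+2-2\alpha}$ (which persists in the pointwise limit, so $|\mathbb K(n)|\lesssim|n|^{-2s+2-2\alpha}$), one gets $|\overline{\widehat\mf}_r(n)\cdot\mathbb H(n)\widehat\mf_r(n)|\lesssim|n|^{-2s+2-2\alpha}|\widehat\mf_r(n)|^2$, hence the double integral is $\lesssim\E\int_0^T\|\mf_r\|^2_{\dot H^{-s+1-\alpha}}\,\dd r<\infty$.

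Putting the two limits together yields \eqref{eq:eq norm}; and since its right-hand side is finite and bounded by $\|\mf_0\|^2_{\dot H^{-s}}+C\,\E\int_0^T\|\mf_r\|^2_{\dot H^{-s+1-\alpha}}\,\dd r$ independently of $t$, we conclude $\sup_{t\in[0,T]}\E\|\mf^R_t\|^2_{\dot H^{-s}}<\infty$, i.e.\ $\mf^R\in L^\infty_t(L^2_\omega(\dot H^{-s}))$. The whole argument is essentially an assembly of \cref{lem:limit_uniq} and the preceding lemma, so there is no serious obstacle; the only points to watch are that the exchange of limits is legitimate — monotone on one side, furnished by the preceding lemma on the other — and that the finiteness of $\E\|\mf^R_t\|^2_{\dot H^{-s}}$ is an output of this comparison rather than something assumed at the outset.
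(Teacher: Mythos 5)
Your proposal is correct and follows essentially the same route as the paper: pass to the limit $\delta\to 0$ in the identity \eqref{eq:eq norm approx} of \cref{lem:limit_uniq}, using monotone convergence on the left-hand side and the preceding convergence lemma on the right-hand side, with the uniform bound $|\mathbb K_\delta(n)|\lesssim |n|^{-2s+2-2\alpha}$ and $\mf\in L^2_{t,\omega}(\dot H^{-s+1-\alpha})$ forcing finiteness, hence $\mf^R\in L^\infty_t(L^2_\omega(\dot H^{-s}))$. Your additional check that $\dot H^{-s+1-\alpha}\hookrightarrow H^{-s}$ (valid since $s+\alpha>1$) so that \cref{lem:limit_uniq} applies is a detail the paper leaves implicit, and it is correct.
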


\begin{proof}
    This formula is obtained by passing to the limit $\delta\to 0$ in formula \eqref{eq:eq norm approx} and using \cref{lem:limit_uniq}. The formula implies that $\E\|\mf^R_t\|_{\dot{H}^{-s}}^2$ is bounded uniformly in time.
\end{proof}

We are ready to conclude the proof of \cref{thm:main}. For fixed $R>0$, from formula \eqref{eq:eq norm} and the main bound \eqref{eq:main_bd_integral} on $\mathbb H$, we get
\begin{align*}
    \E\|\mf^R_t\|_{\dot{H}^{-s}}^2 -\|\mf_0\|_{\dot{H}^{-s}}^2 &\le \rho_{d,s,\alpha} \E \int_0^{t\wedge \tau_R} \|\mf_t\|_{\dot{H}^{-s}}^2 \dd r\\
    &\le \rho_{d,s,\alpha} \int_0^t \E \|\mf^R_t\|_{\dot{H}^{-s}}^2 \dd r.
\end{align*}
If $\mf_0=0$, then by Gr\"onwall inequality, for every $t$, $\mf^R_t=0$ $\P$-a.s., hence $\mf^R=0$ $\P$-a.s. (by time continuity of paths). Therefore, taking $R\to\infty$, we have $\mf=0$ $\P$-a.s.. By linearity of \eqref{eq:KK}, this shows uniqueness. The proof of \cref{thm:main} is complete.

%%%%%%%%%%%%%%%%%%%%%%%%%%%%%%%%%%%%%%%%%%%%%%%%
%%%%%%%%%%%%%%%%%%%%%%%%%%%%%%%%%%%%%%%%%%%%%%%%

\appendix
\section{Auxiliary and Technical Aspects}\label{sec:Appendix} 

\subsection{It\^o-Stratonovich correction}\label{sec:Ito-Stratonovich_correction} 
Here we show formally the expression \eqref{eq:noise_term_ito+lap}. We recall the series representation in \cref{lem:noise_series} and define $B_k[w]=B[w]\sigma_k$. The link between It\^o and Stratonovich integrals in equation \eqref{eq:KK_Strat} is formally
\begin{equation*}
    -\int_0^t B[\mf_s] \circ \dd W^\cK_s = -\int_0^t B[\mf_s]  \dd W^\cK_s +\frac12 \int_0^t \sum_{k\ge 1} B^2_k[\mf_s] \dd s,
\end{equation*}
hence \eqref{eq:noise_term_ito+lap} is equivalent to
\begin{equation}\label{eq:ItoStrat_formal}
    \sum_{k\ge 1} B^2_k \mf = c_0 \Delta \mf,
\end{equation}
which we will now show, assuming smoothness of $\mf$ and $Q$. We have
\begin{equation*}
    B^2_k \mf = \sigma_k \cdot \nabla \left( \sigma_k \cdot \nabla\mf - \mf \cdot \nabla\sigma_k \right) - \left( \sigma_k \cdot \nabla\mf - \mf \cdot \nabla\sigma_k \right) \cdot \nabla \sigma_k.
\end{equation*}
Dropping for a moment the $k$ subscript and exploiting the divergence free condition of both $\mf$ and $\sigma$, we obtain
\begin{align*}
    &\left[\sigma \cdot \nabla \left( \sigma \cdot \nabla\mf - \mf \cdot \nabla\sigma \right) - \left( \sigma \cdot \nabla\mf - \mf \cdot \nabla\sigma_k \right) \cdot \nabla \sigma \right]^h\\
    &\qquad=\sigma^i\partial_i \left( \sigma^j \partial_j \mf^h - \mf^j\partial_j \sigma^h \right) - \left( \sigma^i \partial_i \mf^j - \mf^i\partial_i \sigma^j \right) \partial_j \sigma^h\\
    &\qquad=\partial_i \left( \sigma^i \sigma^j \partial_j \mf^h - 2\sigma^i\mf^j\partial_j \sigma^h  + \mf^i \partial_j( \sigma^j  \sigma^h) \right).
\end{align*}
Then, summing over $k$, we have
\begin{equation*}
    \sum_{k\ge 1} B^2_k \mf = \div \left( Q(0)\nabla \mf \right) +2 \div(\mf\cdot \nabla Q(0)) + \div(\mf \div (Q(0))),
\end{equation*}
where, we used that
\begin{align*}
    \sum_{k\ge 1} \sigma^i_k(x)\partial_j\sigma^h_k (x) 
    &= \sum_{k\ge 1} \partial_{y_j}\left(\sigma^i_k(x)\sigma^h_k (y)\right)\big|_{y=x}\\
    &= \partial_{y_j}Q(x-y)^{ih}\big|_{y=x}
    = \partial_{y_j}Q(x-y)^{ih}\big|_{y=x} = - \partial_j Q(0)^{ih}.
\end{align*}
Being $Q$ an even function, we formally have $\nabla Q=0$. Note, however, that in our setting $Q$ is not rigorously differentiable at the origin. Finally, recalling that $Q(0) =c_0\operatorname{I} $, formula \eqref{eq:ItoStrat_formal} is proven.

\subsection{Technical lemmas and proofs}

Here we give some technical lemmas and proofs.

\begin{proof}[Proof of \cref{lem:rep_kernel}]
	We can decompose the operator $\cQ^\frac12$ as $\cQ^\frac12= \cT\cL$, where $\cL$ is the Leray projector, i.e.\ the Fourier multiplier operator associated to the matrix $\left(I-\frac{n n^\top}{|n|^2}\right)$, and $\cT$ is the operator associated to the remaining multiplier $\bracn^{-\frac d2-\alpha}$. The operator $\cL$ maps $L^2$ into $L^2_{\div}$ surjectively, while $\cT$ is an isometric bijection between $H^s_{\div}$ and $H^{s+d/2+\alpha}_{\div}$ for every $s\in\R$ (a detailed proof would be similar to the corresponding part of the proof of \cref{lem:topological_dual}). This proves the first part of the lemma.
	
	Concerning the second part, by the definition of pseudo-inverse operator, we have
	\begin{align*}
		\cQ^{-1/2} u 
		&\coloneqq \argmin \left\{\|v\|_{L^2}:\; v\in L^2,\, \cQ^\frac12 v = u\right\}\\
		&= \argmin \left\{\|v\|_{L^2}:\; v\in L^2,\,  \cL v = \cT^{-1} u\right\},
	\end{align*}
	and, by Helmholtz–Leray decomposition, the minimizer is the only element $v$ which is already divergence free (i.e.\ $\cL v = v$). It follows $\cQ^{-1/2} u = \cT^{-1} u$ and, in particular, by the isometry property of $\cT$, we have
	\begin{equation*}
		\| \cQ^{-\frac 12 }u\|_{L^2}
		=\| \cT^{-1}u\|_{L^2}=\|u\|_{H^{\frac d2 + \alpha}},
	\end{equation*}
	which completes the proof.
\end{proof}

\begin{lemma}\label{lem:Q_not_trace_class}
	The operator $\cQ: L^2 \to L^2$ is not trace class.
\end{lemma}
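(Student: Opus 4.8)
The plan is to establish something slightly stronger than the statement: $\cQ$ is not even a compact operator on $L^2(\R^d)$, which a fortiori rules out the trace-class property. The structural fact I would exploit is that $\cQ$ is a convolution operator, $\cQ\varphi = Q\ast\varphi$, and therefore commutes with every translation $\tau_h$, $(\tau_h\varphi)(x):=\varphi(x-h)$, $h\in\R^d$, which are unitary on $L^2(\R^d)$; a one-line change of variables gives $\cQ\tau_h = \tau_h\cQ$. I would also record that $\cQ$ is not the zero operator: its Fourier symbol $(2\pi)^{d/2}\widehat Q(n) = (2\pi)^{d/2}\bracn^{-d-2\alpha}P^\perp_n$ is a nonzero matrix for every $n\neq 0$ (here $d\ge2$ is used), so there exists $\varphi\in L^2(\R^d)$ with $\cQ\varphi\neq 0$.

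The core of the argument is an ``escape to infinity'' observation. Fix a sequence $(h_m)_{m\ge1}\subset\R^d$ with $|h_m|\to\infty$. Then $\tau_{h_m}\varphi\rightharpoonup 0$ weakly in $L^2(\R^d)$ for any $\varphi\in L^2(\R^d)$: this is clear when $\varphi$ and the test function $\psi$ both lie in $C_c(\R^d)$, since then $\langle\tau_{h_m}\varphi,\psi\rangle = 0$ once $|h_m|$ is large enough for $h_m+\supp\varphi$ and $\supp\psi$ to be disjoint, and the general case follows by density of $C_c(\R^d)$ in $L^2(\R^d)$ together with $\|\tau_{h_m}\|=1$ uniformly in $m$. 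Now if $\cQ$ were compact, it would map the weakly null sequence $(\tau_{h_m}\varphi)_m$ to a strongly null one, so $\cQ\tau_{h_m}\varphi\to 0$ in $L^2(\R^d)$; but
\begin{equation*}
    \|\cQ\tau_{h_m}\varphi\|_{L^2} = \|\tau_{h_m}\cQ\varphi\|_{L^2} = \|\cQ\varphi\|_{L^2} > 0
\end{equation*}
for every $m$, a contradiction. Hence $\cQ$ is not compact, and in particular not trace class.

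Finally, I would add a short remark to pre-empt a natural objection, since it explains the phrase ``as a consequence of working on the whole $\R^d$'': the ``formal trace'' $\int_{\R^d}\trace\widehat Q(n)\,\dd n = (d-1)\int_{\R^d}\bracn^{-d-2\alpha}\,\dd n$ is actually \emph{finite} (because $d+2\alpha>d$), so finiteness of that integral is not what fails. The obstruction is purely that over $\R^d$, unlike over $\T^d$, the Fourier variable ranges over a non-discrete set, and a nontrivial Fourier multiplier on $L^2(\R^d)$ can never be compact. I do not expect any real difficulty here; the only point requiring a modicum of care is the density argument for the weak convergence $\tau_{h_m}\varphi\rightharpoonup 0$, which is entirely routine.
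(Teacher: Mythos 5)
Your proof is correct, and it takes a genuinely different route from the paper. The paper argues by contradiction through the spectral theorem: trace class would imply compactness, hence (restricting to $L^2_{\div}$, where the symbol acts as the scalar multiplier $\bracn^{-d-2\alpha}$) an orthonormal basis of eigenvectors would exist; Fourier transforming the eigenvalue equation forces each eigenvector's Fourier transform to be supported on a level set of $\bracn^{-d-2\alpha}$, which has zero Lebesgue measure, so every eigenvector vanishes — a contradiction. You instead exploit translation invariance: $\cQ$ is a nonzero convolution operator, so it commutes with the unitary translations $\tau_h$; translates $\tau_{h_m}\varphi$ of a fixed $\varphi$ with $\cQ\varphi\neq 0$ converge weakly to zero while $\|\cQ\tau_{h_m}\varphi\|_{L^2}=\|\cQ\varphi\|_{L^2}$ stays bounded away from zero, which is incompatible with compactness. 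Your argument is more elementary (no spectral theorem) and more general — it shows that \emph{no} nonzero bounded Fourier multiplier on $L^2(\R^d)$ is compact, independently of the particular shape of $\widehat Q$ — whereas the paper's proof leans on the specific fact that the level sets of the multiplier are Lebesgue-null. Both proofs ultimately disprove compactness and conclude a fortiori. One small remark on your closing comment: the finiteness of $\int_{\R^d}\trace\widehat Q(n)\,\dd n$ is indeed not the obstruction, but it may be worth noting that the natural ``kernel trace'' is $\int_{\R^d}\trace Q(0)\,\dd x$, which diverges purely because of the infinite volume of $\R^d$ (since $Q(0)=c_0 I$ is finite); this is the cleanest way to see why the statement is specific to the whole space as opposed to $\T^d$.
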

\begin{proof}
	Assume by contradiction that $\cQ$ is trace class. Then, $\cQ$ is compact (\cite[Proposition 18.6 and Corollary 18.7]{Conway2000}), hence, also its (not relabeled) restriction $\cQ: L^2_{\div} \to L^2_{\div}$ is compact. By the spectral theorem, there exists an orthonormal basis of $L^2_{\div}$ made of eigenvectors of $\cQ$. Let $v\in L^2_{\div}$ be an eigenvector. By Fourier transforming the equality $\cQ v = \lambda v$, we obtain
	\begin{equation*}
		(2\pi)^{\frac d2}\bracn^{-d-2\alpha} \widehat{v}(n)
		= \lambda \widehat{v}(n),
	\end{equation*}
	which requires $\widehat{v}$ to be supported on a single level set of $\bracn^{-d-2\alpha}$. However, every level set of $\bracn^{-d-2\alpha}$ has zero Lebesgue measure, forcing $v=0$. This is a contradiction.
\end{proof}

\begin{proof}[Proof of \cref{lem:topological_dual}]
    For both $\dot H^{s+1}_\vee(\R^d)$ and $\dot H^{s-1}_\wedge (\R^d)$, the singularity at the origin in the Fourier weight is $|n|^{2s}$. In particular, we can replicate the proof of \cite[Proposition 1.34]{Bahouri2011} to show that $\dot H^{s+1}_\vee(\R^d)$ and $\dot H^{s-1}_\wedge (\R^d)$ are Hilbert spaces.
    
	Concerning duality, let us introduce the operator $\cT$ associated with the Fourier multiplier $(1\vee |n|^2)$ and its inverse $\cT^{-1}$ associated to the inverse of such multiplier. For $v\in\dot H^{s+1}_\vee$ and $w\in \dot H^{s-1}_\wedge$, we have
	\begin{align*}
		\|\cT v\|^2_{\dot H^{s-1}_\wedge}
		&= \int_{\R^d} |n|^{2s}\left(1\wedge|n|^{-2}\right)\left(1\vee|n|^{2}\right)^2 |\widehat{v}(n)|^2 \dd n\\
		&=\int_{\R^d} |n|^{2s}\left(1\vee|n|^{2}\right) |\widehat{v}(n)|^2 \dd n =\|v\|^2_{\dot H^{s+1}_\vee} ,\\
		\|\cT^{-1}w\|^2_{\dot H^{s+1}_\vee}
		&= \int_{\R^d} |n|^{2s}\left(1\vee|n|^{2}\right)\left(1\vee|n|^{2}\right)^{-2} |\widehat{w}(n)|^2 \dd n\\
		&= \int_{\R^d} |n|^{2s}\left(1\wedge|n|^{-2}\right) |\widehat{w}(n)|^2 \dd n = \|w\|^2_{\dot H^{s-1}_\wedge},
	\end{align*}
	showing that $\cT:\dot H^{s+1}_\vee \to \dot H^{s-1}_\wedge$ is a bijection and an isometry.
	Moreover, being for $v\in\dot H^{s+1}_\vee$ and $w\in \dot H^{s-1}_\wedge$
	\begin{align*}
		\int_{\R^d} |n|^{2s} \widehat v(n)\cdot\overline{\widehat w} (n)\dd n
		&=\int_{\R^d} |n|^{2s}\left(1\wedge|n|^{-1}\right)\left(1\vee|n|\right) \widehat v(n)\cdot\overline{\widehat w} (n) \dd n \\
		&\le \|v\|_{\dot H^{s+1}_\vee} \|w\|_{\dot H^{s-1}_\wedge},
	\end{align*}  
	we can extend the scalar product in $\dot H^s$ to a duality between $\dot H^{s+1}_\vee$ and $\dot H^{s-1}_\wedge$.
	
	Finally, we characterize the topological dual $(\dot H^{s+1}_\vee)^*$. Let $l\in (\dot H^{s+1}_\vee)^*$, then by Riesz representation theorem, there exists $\cR l \in \dot H^{s+1}_\vee$ such that for every $v \in \dot H^{s+1}_\vee$
	\begin{align*}
		_{(\dot H^{s+1}_\vee)^*} \langle l,v \rangle_{\dot H^{s+1}_\vee}
		&= _{\dot H^{s+1}_\vee} \langle \cR l,v \rangle_{\dot H^{s+1}_\vee}\\
		&= \int_{\R^d} |n|^{2s} \vee |n|^{2s+2}\widehat{\cR l}(n)\cdot\overline{\widehat w} (n)\dd n\\
		&= \int_{\R^d} |n|^{2s} \widehat{\cT\cR l}(n)\cdot\overline{\widehat w} (n)\dd n 
		= _{\dot H^{s-1}_\wedge} \langle \cT \cR l,v \rangle_{\dot H^{s+1}_\vee},
	\end{align*}
	with $\|l\|_{(\dot H^{s+1}_\vee)^*} = \|\cR l\|_{\dot H^{s+1}_\vee}=\|\cT \cR l\|_{\dot H^{s-1}_\wedge}$, showing that $\cT \cR:(\dot H^{s+1}_\vee)^* \to \dot H^{s-1}_\wedge$ is a bijection and an isometry.
\end{proof}

%----------------------------------------------
\begin{acknowledgements}
    MB and MM acknowledge support and hospitality from the Bernoulli Center, EPFL, through the program `New developments and challenges in Stochastic Partial Differential Equations'. FG and MM acknowledge support and hospitality from Centro Internazionale per la Ricerca Matematica (Trento), Fondazione Bruno Kessler and Università degli Studi di Trento, through the Research in Pairs program. MM acknowledges support from the Italian Ministry of Research through the project PRIN 2022 `Noise in fluid dynamics and related models', project number I53D23002270006, and from Istituto Nazionale di Alta Matematica (INdAM) through the project GNAMPA 2024 `Fluidodinamica stocastica'. MB, FG and MM are members of Istituto Nazionale di Alta Matematica (INdAM), group GNAMPA.

    The authors thank Theodore Drivas for suggesting the problem.
\end{acknowledgements}
%----------------------------------------------

\bibliography{biblio.bib}{}
\bibliographystyle{plain}

\end{document}